\renewcommand{\epsilon}{\varepsilon}
\newcommand{\shtm}{\,\widetilde{\triangledown}\,}
\definecolor{shadecolor}{gray}{.85}%
\definecolor{tintedcolor}{gray}{.80}%
\definecolor{mytintedcolor}{gray}{.95}%
\newdimen\svparindent
\newcounter{tmpthm}
\newenvironment{mytinted}{%
  \MakeFramed {\FrameRestore}}%
{\endMakeFramed}
{\endlist\end{mytinted}\egroup}
\newlength{\graphshift}
\newcommand{\Cc}{\mathscr{C}}
\newcommand{\Dd}{\mathscr{D}}
\newcommand{\strM}{\mathcal{M}}
\newcommand{\N}{\mathbb{N}}
\newcommand{\Q}{\mathbb{Q}}
\newcommand{\R}{\mathbb{R}}
\newcommand{\rw}{{\rm rw}}
\newcommand{\lrw}{{\rm lrw}}
\renewcommand{\phi}{\varphi}
\newcommand{\ERCagreement}{{\begin{minipage}{.63\textwidth}\tiny This paper is part of a project that has received funding from the European Research Council (ERC) under the European Union's Horizon 2020 research and innovation programme (grant agreement No 810115 -- {\sc Dynasnet}). \end{minipage}\hfill\begin{minipage}{.32\textwidth}\includegraphics[width=\textwidth]{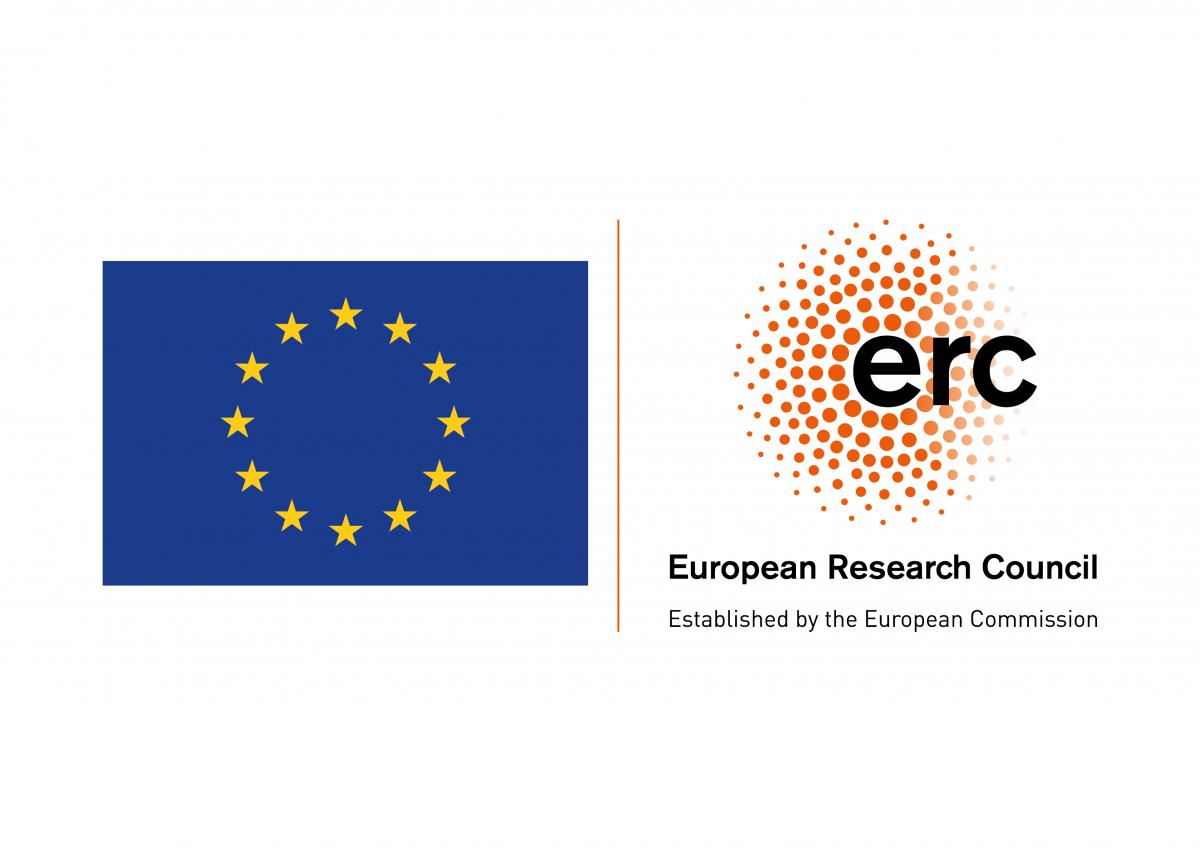}\\\end{minipage}\hfill}}
\newtheorem{theorem}{Theorem}
\newtheorem{proposition}{Proposition}
\newtheorem{observation}{Observation}
\newtheorem{corollary}{Corollary}
\newtheorem{lemma}[theorem]{Lemma}
\newtheorem{claim}{Claim}[theorem]
\theoremstyle{definition}
\newtheorem{definition}[theorem]{Definition}
\newtheorem{example}[theorem]{Example}
\theoremstyle{remark}
\newtheorem{remark}[theorem]{Remark}
\newtheorem{problem}{Problem}
\newtheorem{conjecture}{Conjecture}
\DeclareMathOperator{\dens}{\rm dens}
\def\cqedsymbol{\ifmmode$\lrcorner$\else{\unskip\nobreak\hfil
\penalty50\hskip1em\null\nobreak\hfil$\lrcorner$
\parfillskip=0pt\finalhyphendemerits=0\endgraf}\fi} 
\newcommand{\cqed}{\renewcommand{\qed}{\cqedsymbol}}
\title[Regular partitions of gentle graphs]{Regular partitions of gentle graphs\footnote{\ERCagreement}}
\dedicatory{Dedicated to Endre Szemer\'edi on the occasion of his eightieth birthday.}
\author[Y. Jiang]{Yiting Jiang}
\address{Yiting Jiang\\
Institut de recherche en informatique fondamentale\\
Paris, France
}
\email{yjiang@irif.fr}
\author[J. Ne\v set\v ril]{Jaroslav Ne\v set\v ril}
\address{Jaroslav Ne{\v s}et{\v r}il\\
Computer Science Institute of Charles University (IUUK)\\
Praha, Czech Republic}
\email{nesetril@iuuk.mff.cuni.cz}
\author[P. Ossona de Mendez]{Patrice {Ossona de Mendez}}
\author[S. Siebertz]{Sebastian Siebertz}
\address{Sebastian Siebertz\\University of Bremen,
 Germany}
\email{siebertz@uni-bremen.de}
\address{Patrice~Ossona~de~Mendez\\
Centre d'Analyse et de Math\'ematiques Sociales (CNRS, UMR 8557),
Paris, France\\
  and     Computer Science Institute of Charles University (IUUK),
  Praha, Czech Republic
}
 \email{pom@ehess.fr}
\begin{document}

\begin{abstract} 
  Szemer\'edi's Regularity Lemma is a very useful tool of extremal
  combinatorics. 
  Recently, several refinements of this seminal result were obtained
  for special, more structured classes of graphs. We survey these
  results in their rich combinatorial context. In particular, we
  stress the link to the theory of (structural) sparsity, which leads
  to alternative proofs, refinements and solutions of open problems.
  It is interesting to note that many of these classes present
  challenging problems. Nevertheless, from the point of view of
  regularity lemma type statements, they appear as ``gentle'' classes.
\end{abstract}

\maketitle
\setcounter{tocdepth}{1}
\tableofcontents

\section*{Introduction}

\subsection*{Szemer\'edi's Regularity Lem\-ma} Szemer\'edi's Regularity
Lem\-ma~\cite{sz} is a very useful tool in extremal graph
theory. 
Informally, the lemma states that the vertices of every sufficiently
large graph can be partitioned into a bounded number of parts so that
the edges between almost all pairs of different parts behave in a
sense like random graphs. Let us give the formal definitions.

\begin{definition}
  Let $G$ be a graph and let $A,B\subseteq V(G)$ be two disjoint
  non-empty subsets of vertices. We write $E(A,B)$ for the set of
  edges with one end in $A$ and the other end in $B$. We define the
  \emph{density} of the pair $(A,B)$ as
\[\dens(A,B)\coloneqq \frac{|E(A,B)|}{|A| |B|}.\]
\end{definition}

\begin{definition}
  Let $\epsilon>0$, let $G$ be a graph and let $A,B\subseteq V(G)$ be
  two disjoint non-empty subsets of vertices. We call the pair $(A,B)$
  \emph{$\epsilon$-regular} if, for all subsets $A'\subseteq A$ and
  $B'\subseteq B$ with $|A'|\geq \epsilon|A|$ and
  $|B'|\geq \epsilon |B|$, we have
\[|\dens(A',B')-\dens(A,B)|\leq \epsilon.\]
\end{definition}

This uniform distribution of edges is typical in random bipartite
graphs.

\begin{definition}
  A partition
  $V=V_1\mathop{\dot\cup}V_2\mathop{\dot\cup}\ldots
  \mathop{\dot\cup}V_k$
  of a set into disjoint parts is called an \emph{equipartition} if
  $||V_i|-|V_j||\leq 1$ for $1\leq i<j\leq k$.
\end{definition}

\begin{theorem}[Szemer\'edi's Regularity Lemma~\cite{sz}] For every
  real $\epsilon>0$ and integer~$m\geq 1$ there exists two integers
  $M$ and $N$ with the following property.  For every graph $G$ with
  $n\geq N$ vertices there exists an equipartition of the vertex set
  into $k$ classes $V_1,\ldots, V_k$, $m\leq k\leq M$, such that all but at most
  $\epsilon k^2$ of the pairs~$(V_i,V_j)$ are $\epsilon$-regular.
\end{theorem}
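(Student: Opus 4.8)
The plan is to prove the statement by the \emph{energy-increment} (mean-square density) method, treating the equipartition bookkeeping as a second, more technical layer on top of it. To every partition $\mathcal P=\{V_1,\dots,V_k\}$ of $V(G)$ I would attach its \emph{index}
\[q(\mathcal P)\coloneqq\sum_{1\le i,j\le k}\frac{|V_i|\,|V_j|}{n^2}\,\dens(V_i,V_j)^2,\]
a weighted mean of squared densities. Since each $\dens(V_i,V_j)\in[0,1]$ and the weights sum to at most $1$, we have $0\le q(\mathcal P)\le 1$ immediately. The whole proof is organised so that $q$ increases by a fixed amount at each step while it can never exceed~$1$.

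Three facts carry the argument. (i) \emph{Refinement never decreases the index}: if $\mathcal Q$ refines $\mathcal P$ then $q(\mathcal Q)\ge q(\mathcal P)$; this follows from the Cauchy--Schwarz (equivalently Jensen) inequality applied to the density viewed as a conditional expectation. (ii) \emph{The defect version}: if a single pair $(V_i,V_j)$ fails to be $\epsilon$-regular, witnessed by $A'\subseteq V_i$, $B'\subseteq V_j$ with $|A'|\ge\epsilon|V_i|$, $|B'|\ge\epsilon|V_j|$ and $|\dens(A',B')-\dens(V_i,V_j)|>\epsilon$, then splitting $V_i$ into $\{A',V_i\setminus A'\}$ and $V_j$ into $\{B',V_j\setminus B'\}$ raises the contribution of this pair to the index by at least $\epsilon^4\,\tfrac{|V_i|\,|V_j|}{n^2}$. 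This is the genuine content of the proof, and I expect it to be the main obstacle: it is the quantitative defect form of Cauchy--Schwarz, and the power $\epsilon^4$ must be extracted by a careful variance computation.

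(iii) \emph{Simultaneous refinement}: if a partition into $k$ parts has more than $\epsilon k^2$ irregular pairs, then performing the refinement of~(ii) inside every irregular pair at once yields a common refinement $\mathcal Q$ with at most $k\cdot 2^k$ parts and $q(\mathcal Q)\ge q(\mathcal P)+\epsilon^5$. The increment comes from summing the per-pair gains of~(ii): for an equipartition each weight $\tfrac{|V_i||V_j|}{n^2}$ is about $k^{-2}$, so the $\ge\epsilon k^2$ irregular pairs contribute in total at least $\epsilon k^2\cdot\epsilon^4\cdot k^{-2}=\epsilon^5$, a constant independent of $k$. Starting from any initial equipartition into $m$ parts and iterating, the index strictly increases by $\ge\epsilon^5$ at each step; since it is bounded by~$1$, after at most $\epsilon^{-5}$ steps no more than $\epsilon k^2$ irregular pairs can remain. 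This both terminates the process and bounds the number of parts by an (enormous, tower-type) function $M=M(\epsilon,m)$, whence $N$ is obtained as well.

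Finally I would restore the equipartition property that the raw refinements destroy. The standard fix is, at each stage, to chop every part further into blocks of a common size $t\approx n/(k\cdot 2^k)$, collecting the leftover vertices into a single small exceptional set absorbed at the cost of harmless additional error terms; by~(i) this extra refinement only helps the index, so the increment argument is unaffected. Choosing the parameters so that the exceptional set and the size discrepancies stay below the $\epsilon$-thresholds then completes the proof.
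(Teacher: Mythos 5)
The paper itself gives no proof of this statement---it is Szemer\'edi's seminal lemma, quoted verbatim from the cited source~\cite{sz} as the starting point of the survey---so the only meaningful comparison is with the classical proof, which is exactly what you reproduce: your outline (index $q$ bounded by $1$, monotone under refinement, defect Cauchy--Schwarz giving an $\epsilon^4$ gain per irregular pair, hence an $\epsilon^5$ increment per round, termination after at most $\epsilon^{-5}$ rounds, and the resulting tower-type bound $M$) is the standard energy-increment argument and is correct. The one point to watch when writing it out in full is the equipartition bookkeeping you defer at the end: collecting leftover vertices into a \emph{single} exceptional set is not a refinement (merging can decrease the index), so the standard device is to count the exceptional set as singletons in the index computation, or to keep it below a fixed fraction of $n$ and absorb its cost into a slightly smaller increment such as $\epsilon^5/2$---a technical fix, not a gap in the approach.
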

Szemer\'edi's Regularity Lemma is a high level approximation scheme
for large graphs and has many applications.  It may be seen as an
essential theoretical justification for the introduction and the study
of the so-called {\em stochastic block model} in statistics
\cite{holland1983stochastic}. In this sense, the presence of densities
brings this result closer to a random graph model than to a graph
approximation.

Szemer\'edi's Regularity Lemma is not only a fundamental result in
graph theory, it also 
led to extensions and new proofs in several other mathematical areas
such as analysis \cite{analyst}, information theory
\cite{tao2005szemer}, number theory \cite{green2010arithmetic},
hypergraphs \cite{Rodl2007, Gowers2007, ElekSze} and relational
structures \cite{aroskar2014limits}, algebra \cite{tao2015expanding}
and algebraic geometry \cite{fox2012overlap}. There are countless
applications of the regularity lemma.  One of the key uses of the
lemma is to transfer results from random graphs, which are much easier
to handle, to the class of all graphs of a given edge density. We
refer to the papers~\cite{alon1994algorithmic, komlos1996szemeredi}
for extensive background on the applications of the regularity lemma.

As shown by Conlon and Fox in~\cite{conlon2012bounds}, the exceptional
pairs cannot be avoided in the statement of Szemer\'edi's Regularity
Lemma, as witnessed by the example of half-graphs. A half-graph is a
bipartite graph with vertices $a_1,\ldots, a_n, b_1,\ldots, b_n$ for
some integer $n\geq 1$ and edges $\{a_i,b_j\}$ for
$1\leq i\leq j\leq n$. The bound $M$ for the number of parts in the
partition of the graph is very large: it has to grow as a tower of
$2$'s of height $\Omega(\epsilon^{-2})$, as proved by Fox and Lov\'asz
in~\cite{fox2014tight}, extending earlier results of Gowers
\cite{gowers1997lower}.  

Consequently, it is natural to ask if restrictions on the graph being
partitioned might result in a stronger form of regularity.  Such
restricted versions of the regularity lemma were established for
example in \cite{ackerman2017stable, alon2005crossing,
  chernikov2015regularity, chernikov2016definable, fox2016polynomial,
  malliaris2014regularity, pillay2013remarks, simon2016note,
  starchenko2016nip}. These results establish for example a polynomial
number of parts, stronger forms of regularity, the absence of
exceptional pairs, etc., in restricted graph classes. 
Results of this kind will be simply
called ``regularity lemmas'' in this paper.

\medskip
In the first part of this work we survey these results in their rich
combinatorial context. After this survey part, the stage is set for
our further study of regularity properties of low complexity graph classes.


\subsection*{Sparsity and low complexity classes}

Szemer\'edi's regularity lemma is useful only for dense graphs. For
sparse graphs, i.e.\ graphs with a sub-quadratic number of edges, it
becomes trivial, as every balanced partition into a suitable constant
number of pieces is $\epsilon$-regular. Nevertheless, various 
regularity lemmas for sparse graphs exist, see
e.g.~\cite{kohayakawa1997szemeredi, kohayakawa2003szemeredi, Rodl2015,
  gerke2005sparse,scott2011szemeredi}.  In this paper, 
  sparse graphs are considered in the context of 
  combinatorially defined classes of graphs,
which recently formed a very active area. This is referred to briefly
as ``sparsity'', with key notions, such as bounded expansion, nowhere
denseness, quasi-wideness, etc.\ (see e.g.~\cite{nevsetvril2012sparsity}),
as well as notions from geometric and structural graph theory.  All of
this will be reviewed below in \Cref{sec:graph}. 
  
\smallskip
We will focus on numerous questions that arise
when we consider dense graphs that are constructed from sparse graphs,
e.g.\ map graphs, which are induced subgraphs of squares of planar
graphs. The operations of taking a graph power and taking induced
subgraphs are special cases of \emph{logical transduction} 
and of \emph{logical interpretations}, which are studied in model theory. 
As a second example, graphs of bounded cliquewidth, or
equivalently, bounded rankwidth, are first-order transductions of
tree-orders. In this sense, model theory offers a very convenient 
way to construct graphs from other well behaved structures 
via interpretations and transductions.  
Also, some of the stronger forms of the regularity lemma are based
on model theoretic notions, e.g.\ for graphs defined in distal
theories~\cite{chernikov2015regularity,simon2016note} or graphs with a
stable edge relation~\cite{malliaris2014regularity}.

\smallskip
One of the essential tools in the study of sparse classes are the
so-called \emph{low treedepth decompositions} \cite{Taxi_tdepth, 
  nevsetvril2008grad, nevsetvril2012sparsity, Japan04}, referred to as
\emph{$p$-covers by classes with bounded treedepth} in this paper (see
\Cref{sec:2cov}). This tool has been extended to {\em structurally
  bounded expansion classes}, that is, to graph classes obtained as
transductions of classes with bounded expansion, which turn out to be
characterized by the existence of $p$-covers by classes with bounded
shrubdepth \cite{SBE_drops}. This type of decomposition has been
extended to graphs with linear rankwidth (which are $2$-covered by
classes with bounded embedded shrubdepth \cite{SODA_msrw}) and to
classes $p$-covered by classes with bounded rankwidth
\cite{kwon2020low}. We will see in \Cref{sec:2cov} that there is a
nice interplay between the notions of graph regularity and the
existence of $2$-covers by classes of graphs with small
complexity. For the first time, we consider base classes consisting of
{\em embedded $m$-partite cographs}, which generalize bounded
treedepth, bounded shrubdepth, and bounded embedded shrubdepth. Then
we consider the more general case of a base class with bounded
rankwidth.


\medskip
\subsection*{Our results}

In \Cref{sec:setdef} we introduce set-defined classes, which are
semi-algebraic and have bounded order-dimension. As a consequence,
they enjoy both stable regularity (\Cref{thm:stable_reg}) and
semi-algebraic regularity (\Cref{thm:sa}). We give important examples
of set-defined classes and prove that set-defined classes are a
dense analog of degenerate classes (\Cref{thm:conj1}).

\smallskip
In \Cref{sec:2cov} we consider $2$-covers of a class by another class
and how these $2$-covers transport properties like being
distal-defined, semi-algebraic, set-defined, having bounded
VC-dimension or bounded order-dimension, as well as the exis\-tence of
polynomial $\epsilon$-nice partitions (just as in the distal
regularity lemma \Cref{thm:distal_reg}).

\smallskip
In \Cref{sec:cog}, we show that classes $2$-covered by a class of
embedded $m$-partite cographs are semi-algebraic, and hence satisfy
the semi-algebraic regularity lemma.  Moreover, we give an explicit
construction for the construction of an $\epsilon$-nice partition with
explicit bound for the number of parts (\Cref{cor:reg_emb_2cov}) in
the style of the distal regularity lemma.
 

\smallskip
In \Cref{sec:nd}, we study regularity properties of nowhere dense
classes, and characterize nowhere dense classes in terms of the regularity properties of the $d$-powers of the
graphs in the class (\Cref{thm:ndreg}). On a negative side, we prove
that there exists a nowhere dense class that not only is not
distal-defined, but also does not allow $\epsilon$-nice partitions for
any $\epsilon<1$ (\Cref{cor:nd-noreg}).

\smallskip
Summarizing we provide many new regularity lemmas for the sparse classes defined by the means of combinatorial and model theoretical  tools.

\part{Preliminaries and survey of some regularity lemmas}
\section{Model theory background}
\subsection{Structures}

A language $L$ is a set of function symbols, relation symbols and
constant symbols. To each function symbol $f\in L$ and each relation
symbol $R\in L$ we associate an arity. Let $L$ be a language. An $L$-structure $\strM$ consists
of a nonempty set $M$, called the universe or domain of $\strM$, a
function $f_\strM\colon M^k\rightarrow M$ for each $k$-ary function
symbol $f\in L$, a relation \mbox{$R_\strM\subseteq M^k$} for each
$k$-ary relation symbol $R\in F$ and an element $c_\strM\in M$ for
each constant symbol $c\in L$.  The functions $f_\strM, R_\strM$ and
$c_\strM$ are called the interpretations of $f,R$ and $c$ in
$\strM$. If no confusion can arise we do not distinguish between a
symbol and its interpretation.

\smallskip
\begin{example}
  The standard language for real closed fields is
  $L_{\text{RCF}}=\{+,\cdot,<,0,1\}$, where $+$ and $\cdot$ are
  binary function symbols, $<$ is a binary relation symbol and $0,1$
  are constant symbols. The 
  field of real numbers $(\R,+,\cdot,<,0,1)$, where $+,\cdot,<,0,1$ are
  interpreted as usual, is a prototypical model of the theory of real
  closed fields.
\end{example}

\smallskip
A graph can be seen as a structure over the language
$L_{\text{graph}}=\{E\}$, where $E$ is a binary relation symbol. For
this, we identify a symmetric and irreflexive binary relation with a
set of undirected edges.

\medskip
\subsection{First-order logic, interpretations and transductions}

We use standard first-order logic and refer to~\cite{hodges1993model}
for more background.  When the language~$L$ is clear from context, we
shall use the term {\em formula} for an {\em $L$-formula}, that is a first-order formula in the language $L$.
%
We use the standard abbreviations $\phi\rightarrow \psi$ for
$\neg \phi\vee \psi$, $\phi\leftrightarrow \psi$ for
$(\phi\rightarrow \psi)\wedge (\psi\rightarrow \phi)$, and
$\phi\oplus\psi$ for
$(\phi\wedge \neg \psi) \vee (\neg \phi\wedge \psi)$.
A \emph{theory} is a set of sentences (formulas without free
variables). A {\em model} of a theory~$T$ is a structure $\strM$ that satisfies all the sentences in $T$. 
We do not distinguish between a theory and the class of all its models.

We write $\phi(x_1,\ldots, x_k)$ to indicate that the free variables
of $\phi$ are among $x_1,\ldots, x_k$. We usually write $\bar x$ for
the tuple $(x_1,\ldots, x_k)$ and leave it to the context to determine
the length $k$ of the tuple.  Let $\strM$ be an $L$-structure. Every
$L$-formula $\phi(\bar x)$ defines a relation
$\phi(\strM)=\{\bar a\in M^{|\bar x|}~:~\strM\models \phi(\bar a)\}$.
A relation~$R$ on $\strM$ is called {\em definable} (without parameters) if there is a formula
$\phi(\bar x)$ such that $R=\phi(\strM)$. A graph $G$ is {\em definable} in $\strM$ if, for some integer $k$, we have $V(G)=M^k$ and $E(G)\subseteq M^k\times M^k$ is definable on $\strM$. A class $\Cc$ of graphs is 
\emph{definable} in a class $\Dd$ of structures if there is an integer $k$ and 
a formula $\phi(\bar x)$ with $2k$ free variables such that each 
$G\in \Cc$ is defined by $\phi$ in some $\strM\in \Dd$.

\emph{Interpretations} and \emph{transductions} provide a very useful
formalism to encode (classes of) structures inside other (classes of)
structures and to lift results from one structure to the other.  For
our purpose it will be sufficient to define interpretations of graphs
in structures. 
A \emph{simple interpretation $\mathsf I$ of graphs in
  $L$-structures} consists of two $L$-formulas $\nu(x)$ and
$\eta(x,y)$, where $\eta$
is symmetric (i.e.\ $\eta(x,y)\leftrightarrow\eta(y,x)$) and irreflexive (i.e.\ $\neg\eta(x,x)$).   If~$\strM$ is an $L$-structure,
then $\mathsf{I}(\strM)$ is the graph with vertex set $\nu(\strM)$ and
edge set~$\eta(\strM)\mathop{\cap }\nu(\strM)^2$. For sake of simplicity, we will also allow to define simple interpretations by means of a non-symmetric and/or reflexive formula $\psi(x,y)$ by implicitly considering the formula $\neg (x=y)\wedge(\psi(x,y)\vee \psi(y,x))$.

Transductions allow an additional coloring of the elements of
the structures, which gives additional encoding power. This is formalized
as follows. 
 For languages~$L,L^+$, if
$L\subseteq L^+$, then $L^+$ is called an \emph{expansion} 
of~$L$ and~$L$ is
called a \emph{reduct} of~$L^+$. If~$\strM^+$ is an $L^+$-structure, 
then the
structure $\strM$ obtained from $\strM^+$ by ``forgetting'' the
relations in $L^+\setminus L$ is called the $L$-reduct of $\strM^+$,
and~$\strM^+$ is called an $L^+$-expansion of $\strM$. If $L^+$ is an
expansion of~$L$ by a set of unary relation symbols, and $\strM$ is an
$L$-structure, then we call any \mbox{$L^+$-expansion $\strM^+$} of $\strM$ a
\emph{monadic lift} of $\strM$.  A \emph{transduction} $\mathsf T$ is
the composition of a monadic lift followed by a simple interpretation
$\mathsf I$. 
Let $\Cc$ and $\Dd$ be classes of $L$-structures and graphs,
respectively. We say that~$\Dd$ is a \emph{transduction} of $\Cc$ if
there exists a simple interpretation $\mathsf{I}$ of graphs in
$L^+$-structures, where $L^+$ is a monadic expansion of~$L$, such that
for every $G\in\Dd$ there exists a lift~$\strM^+$ of some structure
$\strM\in\Cc$ such that $G=\mathsf{I}(\strM^+)$.
\section{Graph theoretic background}
\label{sec:graph}
We consider finite, simple and undirected graphs.  For a graph $G$ we
write~$V(G)$ for its vertex set and $E(G)$ for its edge set. For
$A,B\subseteq V(G)$ we write $E(A,B)$ for the set of edges with one
end in $A$ and one end in $B$. A partition of $V(G)$ is a family of
pairwise disjoint subsets $V_1,\ldots, V_k\subseteq V(G)$ whose union
is $V(G)$.  A bipartite graph is a graph with a vertex partition
$V_1,V_2$ such that there are no edges with both ends in $V_i$,
$i=1,2$. A graph $H$ is a subgraph of $G$ if $V(H)\subseteq V(G)$ and
$E(H)\subseteq E(G)$. For $X\subseteq V(G)$, we write $G[X]$ for the
subgraph of $G$ induced by $X$, that is, the subgraph with vertex
set~$X$ and all edges with both ends in~$X$. The graph $H$ with vertex
set~$X$ is an induced subgraph of $G$ if $H=G[X]$.  For an infinite
graph $\mathbf G$, we call the class ${\rm Age}(\mathbf G)$ of all
finite induced subgraphs of~$G$ the \emph{age} of $\mathrm{G}$.  For
disjoint subsets $X,Y$ of $V(G)$, we write $G[X,Y]$ for the subgraph
of~$G$ {\em semi-induced} by~$X$ and $Y$, that is, the subgraph with
vertex set $X\cup Y$ and all the edges with one endpoint in~$X$ and
one endpoint in $Y$. A bipartite graph~$H$ is a {\em semi-induced
  subgraph} of $G$ if $H=G[X,Y]$ for some disjoint subsets~$X$ and~$Y$
of $V(G)$.  A class $\Cc$ of graphs is called {\em monotone} if it is closed
under taking subgraphs and {\em hereditary} if it is closed under taking
induced subgraphs. For a graph~$H$, a class $\Cc$ is called {\em $H$-free}
if no $G\in \Cc$ contains $H$ as an induced subgraph.  A set
$X\subseteq V(G)$ is called {\em homogeneous} if either all distinct vertices
of~$X$ are adjacent (induce a clique) or non-adjacent (induce an
independent set). More generally, a pair $(A,B)$ of subsets of vertices is {\em homogeneous} if $G[A,B]$ is either complete bipartite  or edgeless. Note that a subset $A$ of vertices is homogeneous exactly if the pair $(A,A)$ is homogeneous.
A graph $G$ is {\em $d$-degenerate} if every non-empty induced subgraph of $G$ has minimum degree at most $d$. A class $\mathscr C$ is {\em degenerate} if there is an integer $d$ such that all the graphs in $\mathscr C$ are $d$-degenerate. For a graph~$G$ we denote by $\bar{\rm d}(G)$ the {\em average degree} of $G$, that is the average of the degrees of the vertices of $G$.
\medskip
\subsection{Sparse graph classes}
We refer the reader to \cite{nevsetvril2012sparsity} for an in-depth
study of the notions outlined here. The {\em $r$-subdivision} of a
graph~$G$ is the graph $G^{(r)}$ obtained by subdividing each edge 
of~$G$ exactly $r$ times. A \emph{$\leq$\,$r$-subdivision} of~$G$ is a
graph obtained by subdividing each edge of $G$ at most $r$ times. A
graph~$H$ is a {\em topological minor} of a graph $G$ {\em at depth}
$r$ if a $\leq$\,$2r$-subdivision of $H$ is a subgraph of $G$. We
denote by $G\shtm r$ the set of all the topological minors of $G$ at
depth~$r$. The two key notions in the theory of sparsity~\cite{nevsetvril2012sparsity} are the notions of \emph{bounded expansion}
and \emph{nowhere denseness}.

\smallskip
\begin{definition} 
  A class $\mathscr C$ of graphs has {\em bounded expansion} if
  there exists a function \mbox{$f\colon\mathbb N\rightarrow\mathbb N$} with
  \begin{equation} \forall G\in\mathscr C.\ \forall H\in G\shtm
    r.\quad\bar{\rm d}(H)\leq f(r).
  \end{equation} 
\end{definition}

\smallskip
\begin{definition} 
  A class $\mathscr C$ of graphs is {\em nowhere dense} if there exists a
  function $f\colon\mathbb N\rightarrow\mathbb N$ with
  \begin{equation} \label{eq:nd} \forall G\in\mathscr C.\ \forall H\in
    G\shtm r.\quad\omega(H)\leq f(r). 
  \end{equation} 
\end{definition}

\medskip
Note that every class with bounded expansion is nowhere dense.

\medskip
Bounded expansion and nowhere dense classes enjoy numerous characterizations and applications (see \cite{nevsetvril2012sparsity}). 
In fact, most graph invariants ($\alpha,\chi,\chi_f,\omega$, etc.) lead to characterizations of these classes
\cite{chi_f,nevsetvril2008grad, SurveyND}.
It also appears that for monotone classes of graphs these definitions provide a natural link to model theory (see e.g.~\cite{Adler2013,pilipczuk2018number,podewski1978stable}).
The notions of stability, monadic 
stability, dependence and monadic dependence mentioned in the next theorem are fundamental 
notions from model theory, which will be formally recalled later in \Cref{def:stability} and \Cref{def:dependent}.

\pagebreak
\begin{theorem}[Podewski, Ziegler~\cite{podewski1978stable}, Adler,
  Adler~\cite{Adler2013}]
\label{thm:adler}
If a class $\Cc$ of graphs is monotone, then the following are equivalent.
\begin{enumerate}[(i)]
\item $\Cc$ is nowhere dense, \item $\Cc$ is stable, \item $\Cc$ is
  monadically stable, \item $\Cc$ is dependent, \item $\Cc$ is
  monadically dependent.
\end{enumerate} 
\end{theorem}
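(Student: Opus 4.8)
The plan is to close a cycle of implications, most of which are free from model theory, and to isolate the two combinatorial bridges. The implications \emph{monadically stable} $\Rightarrow$ \emph{stable} $\Rightarrow$ \emph{dependent} and \emph{monadically stable} $\Rightarrow$ \emph{monadically dependent} $\Rightarrow$ \emph{dependent} are immediate: monadic stability (resp. monadic dependence) is stability (resp. dependence) in \emph{every} monadic lift, so it specializes to the trivial lift; and stability implies dependence (NIP) at each fixed lift. This gives $(iii)\Rightarrow(ii)\Rightarrow(iv)$ and $(iii)\Rightarrow(v)\Rightarrow(iv)$, so that $(ii)$ and $(v)$ are sandwiched. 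The whole theorem then follows once I establish the two bridges $(iv)\Rightarrow(i)$ and $(i)\Rightarrow(iii)$, i.e. that \emph{dependent} forces nowhere denseness and that nowhere denseness forces the strongest property, monadic stability.

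For $(iv)\Rightarrow(i)$ I argue the contrapositive: a monotone \emph{somewhere dense} class has the independence property, hence is not dependent. If $\Cc$ is somewhere dense there is a fixed depth $r$ such that arbitrarily large cliques occur in $G\shtm r$ for $G\in\Cc$; since $\Cc$ is monotone, the $\le\!2r$-subdivisions of $K_n$ themselves lie in $\Cc$ for all $n$. Each such subdivision assigns to every edge a subdivision length in $\{0,\dots,2r\}$, so by a Ramsey argument on the pairs of a large clique I can pass to a subclique on which all pairwise lengths equal a single fixed value $s\le 2r$; monotone closure then yields the exact $s$-subdivision $H^{(s)}$ of \emph{every} graph $H$ on the retained vertices, in particular of the incidence graph of a set system shattering an arbitrarily large set. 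In $H^{(s)}$ the principal vertices originally adjacent in $H$ are exactly those at distance $s+1$, so the first-order, symmetric, bounded-distance formula $\phi(x,y)\equiv \mathrm{dist}(x,y)=s+1$ recovers $E(H)$; choosing the shattering incidence graphs makes $\phi$ shatter sets of unbounded size across $\Cc$, witnessing the independence property.

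The main obstacle is $(i)\Rightarrow(iii)$: nowhere dense $\Rightarrow$ monadically stable. Suppose some monadic lift $\Cc^+$ and some formula $\phi(\bar x,\bar y)$ has the order property, with half-graph ladders $\bar a_1,\dots,\bar a_m,\bar b_1,\dots,\bar b_m$ (satisfying $\phi(\bar a_i,\bar b_j)\Leftrightarrow i\le j$) of unbounded length $m$. The crucial observation is that $\phi$ mentions only finitely many unary color symbols, so I may fix that finite palette; the Gaifman graph of the colored structure is the underlying $G\in\Cc$, which is nowhere dense, and bounded-radius balls stay sparse. Applying Gaifman's locality theorem rewrites $\phi$ as a Boolean combination of $t$-local formulas and basic local sentences for some radius $t$; the sentences contribute fixed truth values, so the ordered pattern must be produced by the local part, meaning that whether $\phi(\bar a_i,\bar b_j)$ holds is determined by the colored isomorphism type of the radius-$t$ neighborhood of $\bar a_i\bar b_j$. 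A ladder of length $m$ thus yields $\Omega(m)$ tuples whose bounded-radius neighborhoods interact in a strictly ordered fashion, and here I invoke the quantitative sparsity input: by uniform quasi-wideness of nowhere dense classes (equivalently, the bound on clique topological minors at depth $O(t)$) such an ordered family of bounded-radius interactions can only persist up to length bounded in terms of $t$ and the palette size, since a longer one would exhibit a clique topological minor at depth $O(t)$ of unbounded order. The monadic colors inflate the number of local types by only a constant factor and therefore cannot rescue an unbounded ladder, so $m$ is bounded --- a contradiction.

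I expect this last implication to be the genuine difficulty: turning a long half-graph for a \emph{local, colored} formula into a forbidden bounded-depth clique topological minor is precisely the Podewski--Ziegler content, and the sparsity estimate (uniform quasi-wideness / bounded neighborhood complexity) is the nontrivial ingredient. The \emph{monadic} strengthening, by contrast, is cheap: it rests only on the two remarks that unary colors leave the Gaifman graph unchanged and multiply the count of local types by a bounded factor, so the same argument that bounds the order property in the uncolored setting bounds it uniformly across all monadic lifts.
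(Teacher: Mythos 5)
The first thing to note is that the paper does not prove \Cref{thm:adler} at all: it is quoted as an external result of Podewski--Ziegler and Adler--Adler, so your attempt must be measured against those papers, not against anything in this text. Within your proposal, the cycle logic and the easy implications are correct under the paper's definitions (a graph of VC-dimension $d$ contains a half-graph of order $d$, so bounded order-dimension implies bounded VC-dimension, and the monadic notions specialize to the trivial lift). Your argument for (iv)$\Rightarrow$(i) is also essentially the standard one: monotonicity plus somewhere denseness gives, after Ramsey on subdivision lengths, the exact $s$-subdivision of every graph, and the symmetric, irreflexive formula $\mathrm{dist}(x,y)=s+1$ then defines a graph class of unbounded VC-dimension, refuting dependence.

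The genuine gap is in (i)$\Rightarrow$(iii), and it sits exactly where you parked the difficulty. Your pivotal claim --- that a long ordered family of bounded-radius interactions ``would exhibit a clique topological minor at depth $O(t)$ of unbounded order'' --- is not a consequence of anything you establish; it \emph{is} the hard direction of the theorem, restated for local formulas, so the argument is circular. Moreover, the only mechanism your sketch suggests (many pairs joined by short paths force a dense shallow topological minor) is false: in the star $K_{1,2m}$ with hub $c$ and leaves $\bar a_1,\dots,\bar a_m,\bar b_1,\dots,\bar b_m$, every pair is at distance $2$, yet the graph is a tree and has no topological $K_4$ at any depth. Hub configurations are precisely what defeats the naive extraction, and this is why the known proofs must run differently: one first invokes uniform quasi-wideness of nowhere dense classes to delete a bounded set $S$ of such hubs, absorbs the interaction of each tuple with $S$ into boundedly many types (or extra colors), extracts a large subfamily of the ladder tuples that are pairwise far in $G-S$, and only then applies Gaifman locality together with a pigeonhole on the boundedly many $t$-local types to conclude that $\phi(\bar a_i,\bar b_j)$ is constant on far pairs --- contradicting the half-graph ordering. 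Your sketch names quasi-wideness but never performs the deletion, never does the type pigeonhole, never confronts the fact that for close pairs the joint $t$-neighborhood genuinely mixes the two tuples, and never derives the contradiction with the ordering. Filling in these steps amounts to reproving Podewski--Ziegler as adapted by Adler--Adler, so as written the central step of your proof is missing rather than merely compressed.
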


\begin{corollary}\label{crl:nd-stable}
  Every nowhere dense class $\Cc$ is monadically stable.
\end{corollary}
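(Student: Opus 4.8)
The plan is to reduce to the monotone case already settled by \Cref{thm:adler}, since the corollary merely drops the monotonicity hypothesis. Given an arbitrary nowhere dense class $\Cc$, I would first pass to its \emph{monotone closure} $\Cc'$, namely the class of all subgraphs of members of $\Cc$. By construction $\Cc'$ is monotone and contains $\Cc$. The two facts to verify are then that $\Cc'$ is still nowhere dense, so that \Cref{thm:adler} applies to it and yields its monadic stability, and that monadic stability descends from $\Cc'$ to the subclass $\Cc$.

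First I would check that taking subgraphs cannot create new shallow topological minors. If $G'$ is a subgraph of $G$ and $H\in G'\shtm r$, then some $\leq 2r$-subdivision of $H$ is a subgraph of $G'$, hence a subgraph of $G$, so that $H\in G\shtm r$; thus $G'\shtm r\subseteq G\shtm r$. Consequently, if $f$ witnesses the nowhere denseness of $\Cc$ via \eqref{eq:nd}, the very same $f$ witnesses the nowhere denseness of $\Cc'$: every $G'\in\Cc'$ is a subgraph of some $G\in\Cc$, and every $H\in G'\shtm r$ lies in $G\shtm r$, whence $\omega(H)\leq f(r)$. So $\Cc'$ is a monotone nowhere dense class, and \Cref{thm:adler} gives that $\Cc'$ is monadically stable.

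It then remains to transfer monadic stability from $\Cc'$ down to $\Cc$. Here I would invoke that monadic stability is preserved under passing to subclasses: a monadic lift of $\Cc$ is in particular a monadic lift of $\Cc'$, so any instability witness (a formula exhibiting the order property on arbitrarily long configurations in monadically coloured members of $\Cc$) already occurs in $\Cc'$. Since $\Cc\subseteq\Cc'$ and $\Cc'$ is monadically stable, $\Cc$ is monadically stable as well. The main obstacle is conceptual rather than computational: one must observe that nowhere denseness, being defined purely through shallow topological minors, is automatically inherited by the monotone closure, which is precisely what allows us to discard the monotonicity assumption of \Cref{thm:adler} and still apply it.
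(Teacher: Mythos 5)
Your proposal is correct and follows exactly the paper's own argument: pass to the monotone closure, observe via the shallow-topological-minor definition that it remains nowhere dense, apply \Cref{thm:adler} to conclude it is monadically stable, and note that monadic stability is inherited by subclasses. Your write-up just spells out in more detail the two verification steps that the paper's three-line proof leaves implicit.
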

\begin{proof}
  According to \eqref{eq:nd} the monotone closure $\overline\Cc$ of a
  nowhere dense class $\Cc$ is nowhere dense. Hence $\overline\Cc$ is
  monadically stable (by \Cref{thm:adler}), and so is
  $\Cc\subseteq \overline\Cc$.
\end{proof}

\subsection{Rankwidth and linear rankwidth}
The notion of~rankwidth was introduced
in~\cite{oum2006approximating} as an efficient approximation to
cliquewidth.  For a graph $G$ and a subset $X\subseteq V(G)$ we define
the \emph{cut-rank} of $X$ in $G$, denoted~$\rho_G(X)$, as the rank of
the $|X|\times |V(G)\setminus X|$  matrix $A_X$ over the binary
field~$\mathbb{F}_2$, where the entry of $A_X$ on the $i$-th row and
$j$-th column is~$1$ if and only if the $i$-th vertex in~$X$ is
adjacent to the $j$-th vertex in $V(G)\setminus X$. If $X=\emptyset$
or $X=V(G)$, then we define~$\rho_G(X)$ to be zero.

A \emph{subcubic} tree is a tree where every node has degree $1$ or
$3$. A \emph{rank decomposition} of a graph~$G$ is a pair $(T,L)$,
where $T$ is a subcubic tree with at least two nodes and $L$ is a
bijection from $V(G)$ to the set of leaves of $T$.  For an edge
$e\in E(T)$, the connected components of $T-e$ induce a partition
$(X,Y)$ of the set of leaves of $T$. The \emph{width} of an edge~$e$
of $(T,L)$ is the cut-rank $\rho_G(L^{-1}(X))$. The width of $(T,L)$ is the maximum
width over all edges of $T$ (and at least $0$). The \emph{rankwidth}
$\rw(G)$ of $G$ is the minimum width over all rank decompositions of~$G$.  When the graph has at most one vertex then there is no rank
decomposition and the rankwidth is defined to be $0$.

The linear rankwidth of a graph is a linearized variant of
rankwidth, similarly as pathwidth is a linearized variant of
treewidth:  Let $G$ be an $n$-vertex graph and let $v_1,\ldots, v_n$
be an order of $V(G)$. The \emph{width} of this order is
$\max_{1\leq i\leq n-1}\rho_G(\{v_1,\ldots, v_i\})$.  The \emph{linear
  rankwidth} of $G$, denoted $\lrw(G)$, is the minimum width over all
linear orders of $G$. If $G$ has less than $2$ vertices we define the
linear rankwidth of $G$ to be zero.  An alternative way to define the
linear rankwidth is to define a linear rank decom\-posi\-tion $(T,L)$
to be a rank decomposition such that $T$ is a caterpillar and then
define linear rankwidth as the minimum width over all linear rank
decompositions. Recall that a caterpillar is a tree in which all the
vertices are within distance $1$ of a central path.

Chudnovsky and Oum \cite{chudnovsky2018vertex} observed that classes
of graphs with rankwidth at most~$k$ have the strong Erd\H os-Hajnal
property: Indeed, an $n$-vertex graph $G$ of rankwidth at most~$k$ has
a vertex set $X$ such that the cut-rank of $X$ is at most~$k$ and 
$|X|,|V(G)|-|X|>n/3$. Then one can partition each of $X$ and $V(G)-X$
into at most $2^k$ subsets such that each part of $X$ is complete or
anti-complete to each part of $V(G)-X$. It is thus natural to ask
whether classes with bounded rankwidth are distal-defined. 
We leave this question as a problem (Problem~\ref{pb:rw}).

\subsection{Low complexity classes}
Structurally sparse classes are classes that are transductions of
sparse classes, or, in other words, classes that can be encoded in a
sparse class by means of a coloring and a simple first-order
interpretation~\mbox{\cite{ SBE_drops, SurveyND, msrw}} (see \Cref{fig:lowcplx}).

\begin{figure}[ht]
	\begin{center}
		\includegraphics[width=\textwidth]{}
	\end{center}
	\caption{Low complexity classes}
	\label{fig:lowcplx}
\end{figure}

For instance, classes with {\em bounded shrubdepth}~\cite{Ganian2012,
  Ganian2017} are the transductions of classes of bounded height
trees, structurally bounded expansions classes are transductions of
classes with bounded expansion, structurally nowhere dense classes are
transductions of nowhere dense classes.

\medskip
Alternatively, classes with bounded shrubdepth can be defined using a
graph invariant called {\em SC-depth}, which is inductively defined as
follows: the class $\mathcal{SC}_1$ of all graphs of SC-depth $1$ is
$\{K_1\}$ and for $t>1$, the class $\mathcal{SC}_t$ of all graphs of
SC-depth at most $t$ is the class of all graphs $G$ such that there
exists graphs $G_1,\dots,G_k$ in $\mathcal{SC}_{t-1}$ (with disjoint
vertex sets) and $A\subseteq V(G_1)\cup\dots\cup V(G_k)$, such that
$G$ is obtained from the disjoint union of $G_1,\dots,G_k$ by
complementing the adjacency of the pairs of vertices in $A\times A$.
Then a class $\mathscr C$ has bounded shrubdepth if and only if it is
included in some class $\mathcal{SC}_t$, i.e.\ if it has bounded
SC-depth.

\medskip
Graphs with SC-depth $t$ are special instances of $m$-partite cographs
(for $m=2^t$). An {\em $m$-partite cograph} is a graph that can be
encoded in a tree semilattice $(T,\wedge)$, that is, the
meet-semilattice defined by the least common ancestor
operation~$\wedge$ in the rooted tree $T$, as follows: the leaves
of $T$ (i.e.\ the maximal elements of~$T$) are the vertices of $G$ and
are colored by $c\colon V(G)\rightarrow [m]$, where $[m]=\{1,\ldots,m\}$, while each internal
vertex~$v$ of $T$ (i.e.\ each non-maximal element $v$ of~$T$) is
assigned a symmetric function $f_v\colon [m]\times [m]\rightarrow\{0,1\}$ in
such a way that two vertices $u,v$ of $G$ are adjacent if and only if
$f_{u\wedge v}(c(u),c(v))=1$. Hence, cographs are exactly $1$-partite
cographs (only one color of vertices). Note that $m$-partite cographs are clearly transductions of
{\em tree-orders}, that is of partial orders defined by the ancestor
relation in a rooted tree.  However, not every transduction of the
class of tree-orders is a class of $m$-partite cographs for some
$m$. As proved by Colcombet \cite{Colcombet2007}, a class is a
transduction of a class of tree-orders if and only if it has bounded
rankwidth, and it is a transduction of a class of linear orders
if and only if it has bounded linear rankwidth.
	
\begin{definition}
A class $\mathscr C$ is {\em $p$-covered} by a class $\mathscr D$ if
there exists an integer \mbox{$K(p)\geq p$} such that every
$G\in\mathscr C$ has a vertex partition $V_1,\dots,V_{K(p)}$ with
\mbox{$G[V_{i_1}\cup\dots\cup V_{i_p}]\in\mathscr D$} for all
$1\leq i_1<i_2<\dots<i_p\leq K(p)$. The minimum integer $K(p)$ is the
{\em magnitude} of the $p$-cover. If a class $\mathscr C$ is $p$-covered by a class~$\mathscr D$ for each integer $p$, we say that 
$\mathscr C$ has {\em low $\mathscr D$-covers}.
\end{definition}

We have the following non-trivial characterizations of classes with bounded
expansion and of classes with structurally bounded expansion.

\begin{theorem}[\cite{nevsetvril2008grad}]
  A class $\mathscr C$ has bounded expansion if and only if for every
  integer $p$ the class $\mathscr C$ is $p$-covered by a class with
  bounded treedepth.
\end{theorem}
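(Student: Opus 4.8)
The plan is to prove the two implications separately. Throughout, write $\mathrm{td}(G)$ for the treedepth of $G$, and recall two elementary facts that I will use about bounded treedepth: first, treedepth is monotone under taking topological minors (indeed under taking minors); second, a graph of treedepth at most $t$ is $(t-1)$-degenerate, hence has at most $(t-1)|V(G)|$ edges. I will also use the standard characterization that a graph admits a \emph{centered coloring} with $k$ colors (a coloring in which every connected subgraph has a vertex of unique color) if and only if its treedepth is at most $k$.

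\textbf{Covers $\Rightarrow$ bounded expansion (the easy direction).} Fix $r$ and set $p\coloneqq 2r+2$. By hypothesis there are a class $\mathscr{D}$ of treedepth at most some $t=t(p)$ and an integer $N=K(p)$ such that every $G\in\mathscr{C}$ has a partition $V_1,\dots,V_N$ in which any $p$ parts induce a graph in $\mathscr{D}$. Let $G\in\mathscr{C}$ and $H\in G\shtm r$, witnessed by branch vertices and internally disjoint paths of length at most $2r+1$ realizing the edges of $H$. Each such path visits at most $2r+2=p$ vertices, hence meets at most $p$ of the parts; assign to each edge $e$ of $H$ the set $C(e)\subseteq[N]$ of parts met by its realizing path, so $|C(e)|\le p$. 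Grouping the edges of $H$ by the value of $C(e)$, the edges sharing a fixed color set $C$ together with their endpoints form a depth-$r$ topological minor $H_C$ of $G_C\coloneqq G[\bigcup_{i\in C}V_i]\in\mathscr{D}$, since all their realizing paths lie inside $G_C$. As $\mathrm{td}(G_C)\le t$, monotonicity gives $\mathrm{td}(H_C)\le t$, so $|E(H_C)|\le (t-1)|V(H_C)|\le (t-1)|V(H)|$. There are at most $\sum_{i=0}^{p}\binom{N}{i}$ possible color sets, and each edge of $H$ lies in exactly one group, so $|E(H)|\le\bigl(\sum_{i=0}^{p}\binom{N}{i}\bigr)(t-1)|V(H)|$, i.e.\ $\bar{\rm d}(H)\le 2(t-1)\sum_{i=0}^{p}\binom{N}{i}=:f(r)$, a bound depending on $r$ only. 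Hence $\mathscr{C}$ has bounded expansion.

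\textbf{Bounded expansion $\Rightarrow$ covers (the hard direction).} The natural route is through $p$-\emph{centered} colorings: a coloring $c$ of $V(G)$ is $p$-centered if every connected subgraph receiving at most $p$ colors contains a vertex whose color is unique in it. I would first record the translation to covers. If $c$ uses $N$ colors and is $p$-centered, then for any choice of $p$ color classes the induced subgraph uses at most $p$ colors, so every connected subgraph of it has a center; thus $c$ restricted to it is a centered coloring with at most $p$ colors, and therefore that subgraph has treedepth at most $p$. Consequently the color classes form a $p$-cover of $G$ by the class of graphs of treedepth at most $p$, of magnitude $N$. It therefore suffices to show that a class of bounded expansion admits, for every $p$, $p$-centered colorings whose number $N(p)$ of colors is bounded uniformly over the class.

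The existence of such colorings with a uniformly bounded number of colors is the heart of the matter and the main obstacle. I would obtain it from the \emph{transitive fraternal augmentation} technology. Starting from an acyclic orientation of $G$ with bounded in-degree (available because $\nabla_0(G)\le f(0)$ forces bounded degeneracy), one performs $p$ rounds, each adding, for every directed cherry $u\to v\leftarrow w$, an arc between $u$ and $w$ (the fraternal step) and, for every $u\to v\to w$, the arc $u\to w$ (the transitive step), orienting the new arcs so as to keep the maximum out-degree bounded. The crux — and the genuinely difficult lemma — is that the grads $\nabla_r(G)$ control the out-degrees throughout this process, so the final digraph remains boundedly degenerate and hence properly colorable with $N(p)$ colors; one then checks that any proper coloring of this augmentation is $p$-centered on $G$, since the augmentation records all the short connections that a connected, $\le p$-colored subgraph could exploit. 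Equivalently one may invoke the fact that bounded expansion is the same as uniform boundedness of the weak $r$-coloring numbers $\mathrm{wcol}_r$ and build the coloring greedily along the associated order. Either way, the whole difficulty is the uniform bound $N(p)$, which rests entirely on the quantitative control of shallow-minor densities supplied by the bounded-expansion hypothesis; once $N(p)$ is available, the translation above completes the proof.
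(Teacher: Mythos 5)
You should first note that the paper itself offers no proof of this statement: it is quoted with a citation to \cite{nevsetvril2008grad}, so your attempt can only be measured against the original argument of Ne\v set\v ril and Ossona de Mendez, whose strategy you are in fact following. Your first direction ($p$-covers by bounded treedepth imply bounded expansion) is correct and complete: the choice $p=2r+2$, the assignment to each edge $e$ of $H$ of the set $C(e)$ of parts met by its realizing path, the partition of $E(H)$ into at most $\sum_{i=0}^{p}\binom{N}{i}$ groups, and the bound $|E(H_C)|\leq (t-1)|V(H)|$ via minor-monotonicity of treedepth and $(t-1)$-degeneracy all work. The only point to make explicit is that when $|C(e)|<p$ you must pad $C(e)$ to a set of exactly $p$ indices, since the paper's definition of a $p$-cover only constrains unions of exactly $p$ parts, and then invoke monotonicity of treedepth under (induced) subgraphs; this is a one-line fix.

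The converse, however, contains a genuine gap, and it sits exactly where the content of the theorem lies. Your reduction is fine: a $p$-centered coloring with $N$ colors yields a $p$-cover of magnitude $N$ by the class of graphs of treedepth at most $p$, because any connected subgraph of a union of $p$ color classes receives at most $p$ colors, hence has a vertex of unique color, and a centered coloring with at most $p$ colors certifies treedepth at most $p$. But the existence, uniformly over a bounded expansion class, of $p$-centered colorings with $N(p)$ colors is then \emph{asserted}, not proved: you appeal to the transitive fraternal augmentation machinery (or, alternatively, to the equivalence of bounded expansion with uniformly bounded weak coloring numbers) and you yourself flag that ``the crux'' --- the fact that the grads $\nabla_r(G)$ control the degeneracy of the iterated augmentations, together with the verification that proper colorings of the augmented digraph are $p$-centered on $G$ --- is left unproved. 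That quantitative lemma \emph{is} the theorem of \cite{nevsetvril2008grad}; everything surrounding it in your write-up is elementary bookkeeping. So what you have is a correct and well-organized road map of the known proof rather than a proof. To close the gap you would need (i) the augmentation lemma bounding the density (hence chromatic number) of each augmentation round by a function of finitely many grads of $G$, and (ii) the induction showing that, after sufficiently many rounds (not obviously $p$ of them), every connected subgraph of $G$ using at most $p$ colors acquires a vertex of locally unique color. Neither step is routine, and neither follows from anything you have written.
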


\begin{theorem}[\cite{SBE_drops}]\label{thm:sbe}
  A class $\mathscr C$ has structurally bounded expansion if and only
  if for every integer $p$ the class $\mathscr C$ is $p$-covered by a
  class with bounded shrubdepth.
\end{theorem}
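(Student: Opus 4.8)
\emph{Strategy.} The plan is to prove the two implications separately, relying on two facts recalled above: first, the characterization of bounded expansion by low treedepth covers (\cite{nevsetvril2008grad}); and second, that a class has bounded shrubdepth if and only if it is a first-order transduction of a class of rooted trees of bounded height, equivalently of a class of bounded treedepth (a bounded treedepth class is a transduction of its treedepth decompositions, and bounded shrubdepth is closed under transductions, so any transduction of a bounded treedepth class has bounded shrubdepth). For relational structures ``bounded expansion'' is read through the Gaifman graph, which is unaffected by a monadic lift.

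\emph{From structurally bounded expansion to low shrubdepth covers.} Here every $G\in\mathscr{C}$ equals $\mathsf{I}(\mathcal{M}^+)$ for a monadic lift $\mathcal{M}^+$ of some $\mathcal{M}$ in a bounded expansion class $\mathscr{B}$, where $\mathsf{I}$ has vertex formula $\nu(x)$ and (symmetric, irreflexive) edge formula $\eta(x,y)$. The difficulty is that $\eta$ may quantify over the whole structure, so the subgraph of $G$ induced on a union of parts need not be the $\mathsf{I}$-image of the corresponding substructure. To remove this I would invoke quantifier elimination for bounded expansion classes: after augmenting $\mathcal{M}^+$ by a bounded number of unary functions and relations definable from a low treedepth cover of sufficiently large parameter---an augmentation that preserves bounded expansion---the formulas $\nu,\eta$ become equivalent to quantifier-free formulas over the augmented structures $\mathcal{M}^\ast$, which range over a bounded expansion class $\mathscr{B}^\ast$. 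A quantifier-free formula in a language with boundedly many function symbols evaluates $\eta^{\mathrm{qf}}(u,v)$ using only the bounded term-closures of $u$ and $v$, so adjacency in $G$ becomes a condition local to a bounded gadget around its two endpoints. Now fix $p$, take a low treedepth cover of $\mathscr{B}^\ast$ with parameter $p'$ large enough to absorb these term-closures, and carry the induced partition over to $G$. For any $p$ of the resulting parts the graph induced on their union is the image, under the quantifier-free interpretation, of the substructure of $\mathcal{M}^\ast$ induced on a union of at most $p'$ parts, which has bounded treedepth; hence it has bounded shrubdepth, with SC-depth bounded by a constant independent of $G$ and of the chosen parts. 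Letting $p$ vary yields low shrubdepth covers.

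\emph{From low shrubdepth covers to structurally bounded expansion.} Fix the cover for $p=2$, giving each $G\in\mathscr{C}$ a partition $V_1,\dots,V_N$ ($N=K(2)$ bounded) in which every $G[V_i\cup V_j]$ has bounded SC-depth, hence is captured by a bounded-height tree-model on the leaf set $V_i\cup V_j$; since every edge of $G$ lies in some $G[V_i\cup V_j]$, these models together determine $G$. I would assemble them into a single host structure on $V(G)$ enriched by auxiliary tree-nodes and a bounded coloring, from which a fixed transduction recovers $G$ by reading the pair-dependent adjacency rule off the tree-model of the relevant pair. The essential point---and the reason this direction is not mere bookkeeping---is that the assembly must keep the Gaifman graph of the host inside a bounded expansion class: gluing the $\binom{N}{2}$ bounded-treedepth tree-models naively along their shared leaves can already produce unbounded expansion, as the $1$-subdivision of $K_n$ (a union of two star forests sharing their leaves) shows. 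One must therefore organize the gluing so that the few tree-models interact in a controlled, local fashion, which is what keeps the host of bounded expansion.

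\emph{Main obstacle.} The crux of the whole proof is the quantifier-elimination step of the forward direction, which turns the global interpretation into a local, quantifier-free one over a bounded-expansion augmentation compatible with low treedepth covers; for the converse, the delicate point is to carry out the gluing of the bounded-height tree-models while provably preserving bounded expansion, thereby avoiding the blow-up exhibited by subdivided cliques.
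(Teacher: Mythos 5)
First, a point of reference: this paper does not prove \Cref{thm:sbe} at all --- it imports it from \cite{SBE_drops} --- so your proposal can only be measured against the proof given in that reference. Your forward implication does follow that proof in its essentials: quantifier elimination over bounded expansion classes (augmentation by unary functions and predicates that keeps the Gaifman graph in a bounded expansion class), locality of quantifier-free formulas through term closures, low treedepth covers of the augmented class with the closures absorbed into the cover parameter (this absorption needs the small extra trick of refining the coloring so that the color of an element determines the colors met by its term closure, but that is routine), and finally the fact that quantifier-free interpretations of bounded treedepth structures have bounded shrubdepth. As a sketch, that half is sound.

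The backward implication, however, has a genuine gap, and it sits exactly at the point you flag as ``the delicate point''. You correctly observe that gluing the $\binom{N}{2}$ bounded-height tree-models along their shared leaves can leave the bounded expansion world, but you then only assert that ``one must organize the gluing so that the few tree-models interact in a controlled, local fashion'' --- which restates the goal instead of achieving it. The obstacle is not naivety that bookkeeping removes: if the tree-models of the pieces $G[V_i\cup V_j]$ may be chosen arbitrarily, the glued structure can contain $3$-subdivisions of arbitrarily large cliques even when $G$ is \emph{edgeless}. Indeed, let $V_1=\{x_{ij},y_{ij}\mid 1\le i<j\le n\}$ and let $V_2,V_3$ be singletons; take a height-$2$ tree-model $T_{12}$ whose level-$1$ nodes $c_1,\dots,c_n$ group the leaves so that $c_i$ is the parent of all $x_{ij}$ and all $y_{ki}$, and a height-$2$ tree-model $T_{13}$ whose level-$1$ nodes $d_{ij}$ have exactly the children $x_{ij},y_{ij}$; both are legitimate tree-models of edgeless graphs, yet the glued structure contains the internally disjoint paths $c_i\,x_{ij}\,d_{ij}\,y_{ij}\,c_j$, i.e.\ a $3$-subdivision of $K_n$, so the host class has unbounded expansion. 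The same example defeats the obvious variants (disjoint copies of the trees linked by per-vertex paths, ancestor functions, star encodings of the level partitions), since each reproduces the glued structure as a bounded-depth minor of its Gaifman graph. So any correct proof must constrain the tree-models themselves --- for instance by a canonical or reduced choice, or by exploiting the covers for larger $p$ (which your argument never uses; you fix $p=2$) to make the tree-models of overlapping pairs compatible on the shared part. Supplying that ingredient is precisely the substance of this direction in \cite{SBE_drops}; as written, your second half names the crux but does not cross it.
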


Classes with low bounded rankwidth covers have been
considered in~\cite{kwon2020low} and 
with low bounded linear rankwidth covers are discussed in~\cite{SODA_msrw}.
In particular, it is proved in \cite{kwon2020low} that interval graphs
and permutations graphs are not $3$-covered by any class with bounded
rankwidth.

\section{Random-free regularity lemmas}

We first survey regularity properties of hereditary graph classes that
are defined by excluding semi-induced bipartite graphs. 
Common feature of these results is that the regular pairs fail to be random-like bipartite graphs.
 The different
types of regularity considered here are summarized in \Cref{tab:srls}.

\subsection{VC-dimension}
We start with graph classes of bounded VC-dimension. A hereditary
class of graphs has bounded VC-dimension if and only if it excludes
some bipartite graph as an induced subgraph.  Usually, VC-dimension is
defined for set families~\cite{Vapnik1971}, however, in the context of
graph theory the following equivalent definition is more convenient.

\begin{definition}
  The VC-dimension of a graph $G$ is the largest integer $d$ such that
  there exist vertices $a_1,\ldots, a_d\in V(G)$ and vertices
  $b_J\in V(G)$ for $J\subseteq \{1,\ldots,d\}$ such that
  $\{a_i,b_J\}\in E(G) \Leftrightarrow i\in J$.
\end{definition}

Note that a hereditary class $\mathscr C$ has bounded VC-dimension if
and only if the number of graphs in $\mathscr C$ on $n$ vertices is at
most $2^{n^{2-\epsilon}}$ for some $\epsilon>0$, as proved by Alon,
Balogh, Bollob\'as, and Morris \cite{alon2011structure}.

\begin{table}[ht]
  \begin{tabular}{|c@{$\qquad$}c|}
    \hlx{ssshvv}
    \multicolumn{2}{|c|}{\Large Variants of Szemer\'edi's Regularity Lemma}\\
    \hlx{vvhv}
    \begin{minipage}{.5\textwidth}
      General case: all pairs but an \mbox{$\epsilon$-fraction} are
      \mbox{{\em $\epsilon$-regular}}. This means that for most pairs
      $(A,B)$ of parts, the density of edges between subsets of $A$
      and $B$ (with at least~$\epsilon$ relative size) differ from the
      density of edges between $A$ and $B$ by at most~$\epsilon$.
    \end{minipage}
 &
   \begin{minipage}{.3\textwidth}
     \includegraphics[width=\textwidth]{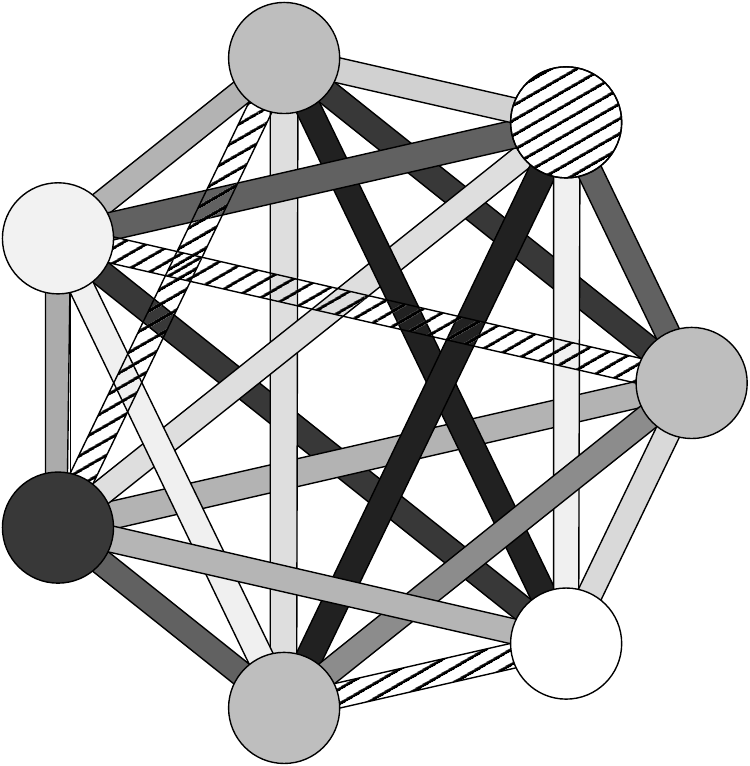}
   \end{minipage}\\
    \hlx{vhv}
    \begin{minipage}{.5\textwidth}
      Bounded VC-dimension, NIP: all pairs but an $\epsilon$-fraction
      are \mbox{{\em $\epsilon$-homogeneous}}. This means that their
      densities are either $<\epsilon$ or $>1-\epsilon$.
 	
      The number of parts is polynomial.
    \end{minipage}
 &
   \begin{minipage}{.3\textwidth}
     \includegraphics[width=\textwidth]{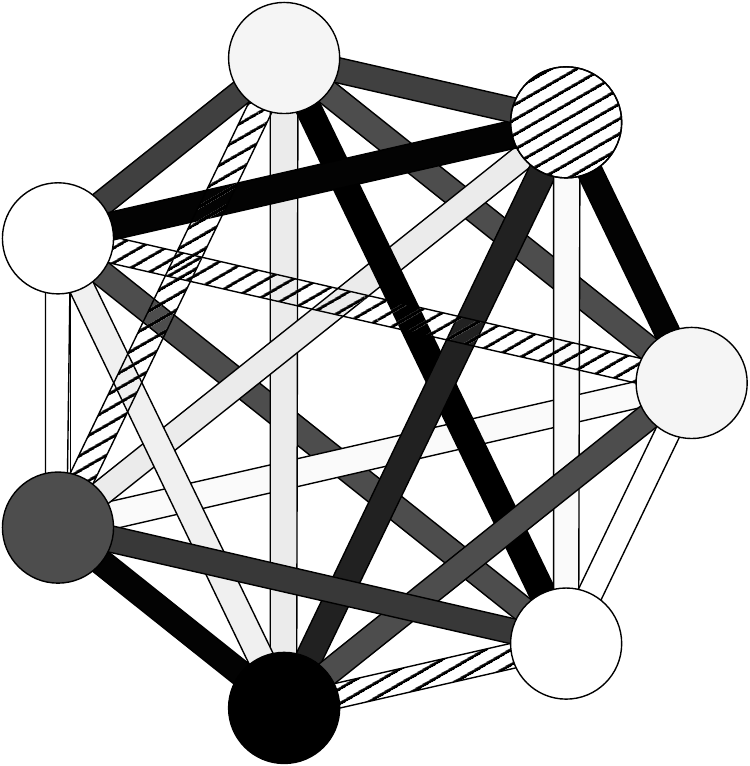}
   \end{minipage}\\
    \hlx{vhv}
    \begin{minipage}{.5\textwidth}
      Bounded order-dimension, stable: all parts are {\em
        $\epsilon$-excellent} and all the pairs are {\em
        $\epsilon$-uniform}. In particular, every part~$A$, all the
      vertices of $G$ have degree either $<\epsilon |A|$ in~$A$ or at
      least $>(1-\epsilon)|A|$ in~$A$, and that for every pair $(A,B)$
      of parts at least $(1-\epsilon)$ proportion of the vertices in
      $A$ have similar degree in $B$.
  
      The number of parts is polynomial.
    \end{minipage}
 &
   \begin{minipage}{.3\textwidth}
     \includegraphics[width=\textwidth]{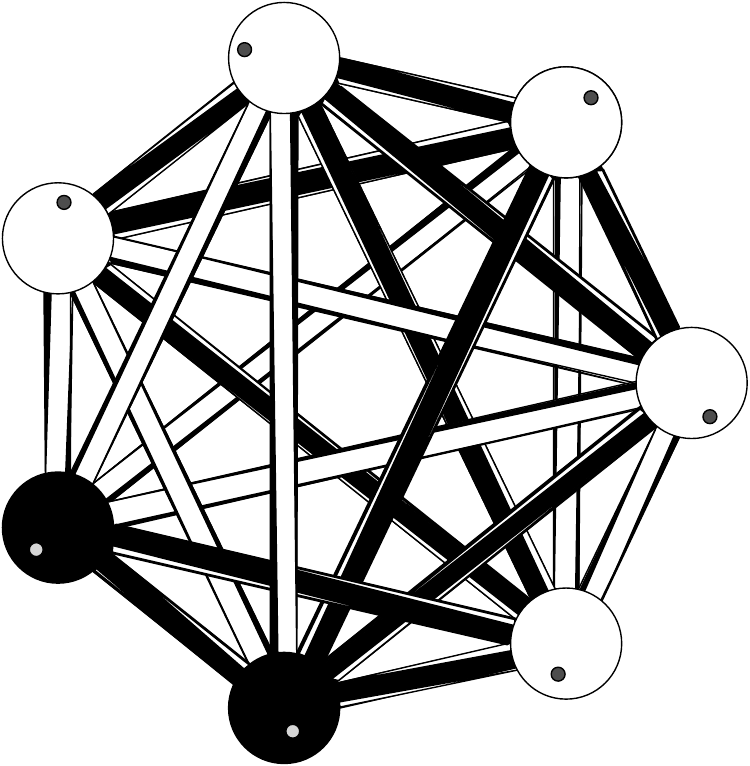}
   \end{minipage}\\
    \hlx{vhv}
    \begin{minipage}{.5\textwidth}
      Induced subgraph of a graph definable in a distal structure,
      semi-algebraic: all pairs but an $\epsilon$-fraction are {\em
        homogeneous}. This means that between non exceptional pairs
      either all edges are present or no edge is present.
  
      The number of parts is polynomial.
    \end{minipage}
 &
   \begin{minipage}{.3\textwidth}
     \includegraphics[width=\textwidth]{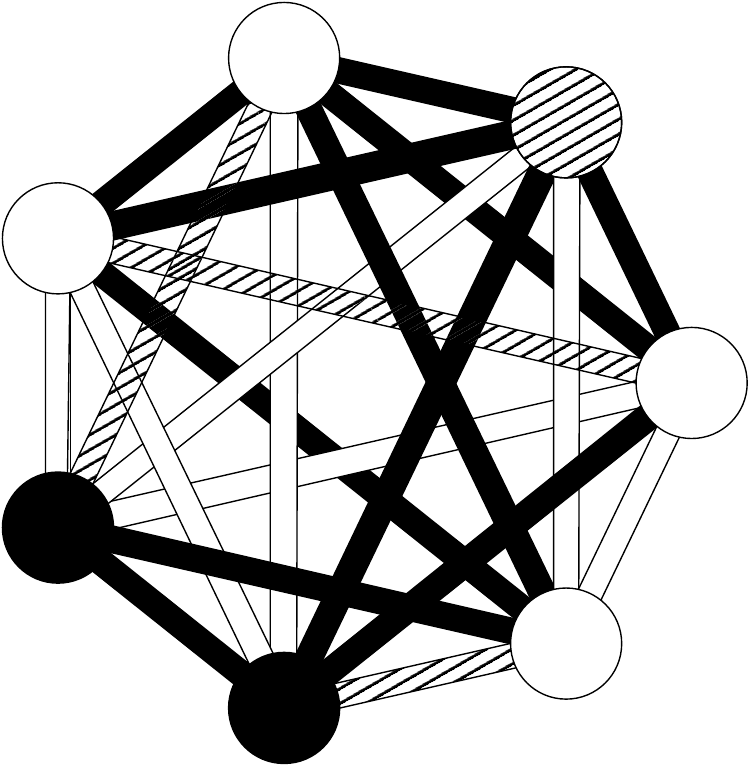}
   \end{minipage}\\
    \hlx{vhs[10pt]}
  \end{tabular}
  \caption{Summary of the different types of regularity lemmas considered in this section. Hatched zones correspond to irregular parts or pairs.\label{tab:srls}}
\end{table}

\pagebreak
The notion of VC-dimension is strongly related to the model theoretic
notion of \emph{dependence} (or \emph{NIP}) ~\cite{Shelah2004,simon2015guide}. 

\begin{definition}\label{def:dependent}
A class $\Cc$ of structures is \emph{dependent} if every graph class
definable in $\Cc$ has bounded VC-dimension. 

The class~$\Cc$ is \emph{monadically dependent} if every graph class
definable in the class $\{\strM^+\mid \strM^+$ monadic lift of 
$\strM\in \Cc\}$ of all monadic lifts of structures from $\Cc$ has
bounded VC-dimension. 
\end{definition}

%

\begin{theorem}[\cite{BS1985monadic}, see also \cite{anderson1990tree}]
A class $\mathscr C$ is monadically dependent if and only if every transduction of $\mathscr C$ is dependent.
\end{theorem}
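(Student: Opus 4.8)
The plan is to reformulate both conditions as statements about a single family of definable graph classes and then to show the two families coincide, at least as far as boundedness of VC\nobreakdash-dimension is concerned. Writing $\mathscr L$ for the class $\{\strM^+ : \strM^+\text{ is a monadic lift of some }\strM\in\mathscr C\}$, monadic dependence of $\mathscr C$ says precisely that every graph class definable in $\mathscr L$ has bounded VC\nobreakdash-dimension, whereas ``every transduction of $\mathscr C$ is dependent'' says that for every simple interpretation $\mathsf I$ over a monadic expansion, every graph class definable in $\mathsf I(\mathscr L)$ has bounded VC\nobreakdash-dimension. I would therefore prove the equivalence by comparing these two families: a graph class definable in some transduction $\mathsf I(\mathscr L)$ on one side, and a graph class definable directly in $\mathscr L$ on the other.

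For the direction ``$\mathscr C$ monadically dependent $\Rightarrow$ every transduction dependent'', I start from a graph class $\mathscr E$ definable in a transduction $\mathscr D=\mathsf I(\mathscr L)$ by an $\{E\}$-formula $\psi(\bar x,\bar y)$ with $|\bar x|=|\bar y|=k$. The key move is to pull $\psi$ back through the simple interpretation $\mathsf I=(\nu,\eta)$: replace each atom $E(u,v)$ by $\nu(u)\wedge\nu(v)\wedge\eta(u,v)$ and relativise all quantifiers to $\nu$, producing an $L^+$-formula $\widehat\psi(\bar x,\bar y)$. Each $H\in\mathscr E$ defined in $G=\mathsf I(\strM^+)$ is then defined by $\widehat\psi$ in $\strM^+$, save that its vertex set is $\nu(\strM^+)^k$ rather than all of $(M^+)^k$. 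I repair this by letting $\widehat\psi$ define edges on all of $(M^+)^k$; the original $H$ is an induced subgraph of the resulting graph $H'$, so $\mathrm{VC}(H)\le \mathrm{VC}(H')$, and the class of the $H'$ is genuinely definable in $\mathscr L$. Monadic dependence of $\mathscr C$ bounds the VC\nobreakdash-dimension of that class, hence of $\mathscr E$, so $\mathscr D$ is dependent. The pull-back, the relativisation to $\nu$, and the induced-subgraph bound on VC\nobreakdash-dimension are all routine bookkeeping.

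For the converse I argue contrapositively: assuming $\mathscr C$ is not monadically dependent, I want to exhibit a transduction of $\mathscr C$ that is not dependent. Unpacking the hypothesis yields a formula $\phi(\bar x,\bar y)$ over a fixed monadic expansion $L^+$, with $|\bar x|=|\bar y|=k$, and for every $n$ a lift $\strM_n^+$ of some $\strM_n\in\mathscr C$ together with tuples $a_1,\dots,a_n$ and $(b_J)_{J\subseteq[n]}$ in $M_n^{\,k}$ such that $\strM_n^+\models\phi(a_i,b_J)$ iff $i\in J$. The aim is to realise this shattering inside one transduction $\mathsf T$ of $\mathscr C$ whose output graphs have single-element vertices, so that the identity formula $E(x,y)$ already witnesses unbounded VC\nobreakdash-dimension on $\mathsf T(\mathscr C)$. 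Here I would exploit that the monadic lift inside a transduction is chosen per output graph: for each $n$ I may colour $\strM_n$ in an $n$-dependent way, marking $n$ ``point'' representatives and $2^n$ ``set'' representatives and trying to record, through the colours and the structure of $\strM_n$, the assignment of each representative to its $k$-tuple, so that $\phi$ becomes an $\{E\}$-definable adjacency between single representatives.

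The step I expect to be the main obstacle is exactly this last reduction from the $k$-ary formula $\phi$ to a graph on single-element vertices. A simple interpretation cannot create new vertices to name $k$-tuples, and the fixed, finite supply of unary colours cannot link a chosen representative to the $k$ coordinate-elements of its tuple uniformly in $n$. Overcoming this is the heart of the Baldwin--Shelah argument: one either invokes the composition (normalisation) theorem for transductions, which lets a $k$-ary interpretation be absorbed once sufficiently many monadic colours are present, or one extracts, by a Ramsey-type argument on the shattered configurations, a sub-configuration in which the tuples admit distinct definable representatives and the shattering survives. Either route closes the converse, and with the forward direction above the two families coincide, establishing the equivalence.
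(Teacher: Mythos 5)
First, a remark on the comparison itself: the paper does not prove this statement at all — it is quoted from Baldwin and Shelah \cite{BS1985monadic} (see also \cite{anderson1990tree}) — so your attempt has to be judged on its own merits rather than against an internal argument. Your forward direction (monadic dependence implies every transduction is dependent) is correct and complete in outline: pulling the defining formula back through the simple interpretation $\mathsf I=(\nu,\eta)$ by substituting $\nu(u)\wedge\nu(v)\wedge\eta(u,v)$ for the edge atoms and relativising quantifiers to $\nu$, extending the edge definition to all of $(M^+)^k$ so that the vertex set is a full power (which the paper's notion of definability requires), and invoking monotonicity of VC-dimension under induced subgraphs. That is indeed routine bookkeeping.

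The converse, however, contains a genuine gap, and it is exactly the step you yourself flag: converting the $k$-ary shattering formula $\phi(\bar x,\bar y)$ over a monadic lift into something realisable through a transduction, whose output is a single graph on \emph{elements} (not tuples) of the lift and through which every subsequently definable class must factor. Neither of your two proposed remedies closes this. The composition/normalisation theorem for transductions is inapplicable: composing transductions (with or without copying) preserves the property that output vertices are elements or boundedly many copies of elements of the input, so it never converts definability on $k$-tuples into a transduction; the obstacle is arity reduction and information loss through a single binary relation, not composition. The ``Ramsey-type extraction of definable representatives'' is not an available lemma you can invoke; it \emph{is} the theorem. The reduction from tuple-shattering in a monadic expansion to a coding configuration accessible to a singleton-vertex interpretation is the substantial content of Baldwin--Shelah's work (and of the later finitary treatments of monadic NIP, e.g.\ by Braunfeld and Laskowski), requiring a careful analysis of coding/pre-coding configurations; nothing in your sketch produces it. As written, the backward direction asserts the essential content of the theorem rather than proving it, so the proposal establishes only the easy implication.
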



\begin{definition}
  Let $\epsilon>0$, let $G$ be a graph and let $A,B\subseteq V(G)$ be
  two disjoint non-empty subsets of vertices. The pair $(A,B)$ is
  called \emph{$\epsilon$-homogeneous} if $\dens(A,B)<\epsilon$ or
  $\dens(A,B)>1-\epsilon$.
\end{definition}

\begin{remark}
  Assume $(A,B)$ is $\epsilon^3$-homogeneous, let $A'\subseteq A$ and
  $B'\subseteq B$ with $|A'|\geq \epsilon |A|$ and
  $|B'|\geq \epsilon |B|$. Then, if ${\rm dens}(A,B)<\epsilon^3$ we
  have
  \[ {\rm dens}(A',B')=\frac{|E(A',B')|}{|A'|\,|B'|}\leq
  \frac{|E(A,B)|}{\epsilon^2 |A|\,|B|}<\epsilon.
  \]
  Similarly, considering the complement graph, if
  ${\rm dens}(A,B)>1-\epsilon^3$ we have
  ${\rm dens}(A',B')>1-\epsilon$.  We deduce that an
  $\epsilon^3$-homogeneous pair is $\epsilon$-regular.
\end{remark}

The following theorem is also known as the \emph{ultra-strong regularity
  lemma} for graphs with bounded VC-dimension~\cite{alon2007efficient,
  Lovasz2010,fox2019erdHos}. The presented bounds come
from~\cite{fox2019erdHos}.

\begin{theorem}[Bounded VC-dimension regularity lemma; Fox, Pach and Suk~\cite{fox2019erdHos}]
  Let $0<\epsilon<1/4$ and let $G$ be a graph of VC-dimension at most
  $d$. Then there exists an equipartition of $V(G)$ into $k$ classes
  $V_1,\ldots, V_k$, where $k$ satisfies  $8/\epsilon\leq k\leq c\cdot (1/\epsilon)^{2d+1}$
  for some constant $c$ depending only on~$d$, such that all but at
  most $\epsilon k^2$ of the pairs $(V_i,V_j)$ are
  $\epsilon$-homogeneous.
\end{theorem}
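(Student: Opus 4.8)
The plan is to prove the bounded VC-dimension regularity lemma by reducing it to a counting/partitioning result about set systems of bounded VC-dimension, namely the existence of small \emph{$\epsilon$-nets} or, more precisely, a partition of the ground set into few pieces each of which is nearly homogeneous with respect to a controlled collection of neighborhoods. The central fact I would invoke is that a set system of VC-dimension at most $d$ on $N$ points has at most $O(N^d)$ distinct sets (the Sauer--Shelah lemma), and that such systems admit an $\epsilon$-approximation (a $\delta$-net/approximation) of size polynomial in $1/\epsilon$, with the polynomial degree governed by $d$. Concretely, I would view each vertex $v\in V(G)$ as giving rise to its neighborhood set $N(v)\subseteq V(G)$, and the hypothesis that $G$ has VC-dimension at most $d$ says precisely that the dual set system $\{N(v): v\in V(G)\}$ has bounded VC-dimension.

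First I would fix $0<\epsilon<1/4$ and pass to an auxiliary parameter $\delta=\delta(\epsilon)$ chosen at the end. The key step is to construct a partition $V_1,\ldots,V_k$ of $V(G)$ so that most pairs $(V_i,V_j)$ are $\epsilon$-homogeneous, which by the \emph{Remark} above is equivalent (after cubing the parameter) to most pairs being $\epsilon$-regular in the usual sense, but with the stronger conclusion that densities are pushed to the extremes. The natural way to force densities toward $0$ or $1$ is to cluster vertices according to the \emph{trace} of the neighborhood system on a small sample. I would select a random sample $S\subseteq V(G)$ of size $s=\mathrm{poly}(1/\epsilon)$ and partition the vertices by their intersection pattern with $S$; by Sauer--Shelah the number of distinct patterns is at most $O(s^d)=\mathrm{poly}(1/\epsilon)$, which yields the claimed polynomial bound $k\le c\cdot(1/\epsilon)^{2d+1}$ on the number of parts. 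The sampling is then shown, via the $\epsilon$-approximation property, to guarantee that for all but an $\epsilon$-fraction of the resulting pairs the true density between parts is within $\epsilon$ of the empirical density measured on $S$, which is itself $0$ or $1$ for most pattern-pairs.

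The verification splits into two estimates that I would carry out separately. One is a \emph{Sauer--Shelah counting bound} controlling the number of cells and hence the number of parts; after refining to an equipartition (splitting large parts into equal-size chunks of the appropriate common size) the bound degrades only by a constant factor and the lower bound $8/\epsilon\le k$ is arranged by merging or subdividing as needed. The other is an \emph{$\epsilon$-approximation concentration bound}: using the fact that bounded VC-dimension set systems admit approximations of size $\mathrm{poly}(1/\epsilon)$ with failure probability bounded away from $1$, a union bound over the polynomially many pattern-classes shows that with positive probability a single sample $S$ simultaneously certifies near-homogeneity for all but $\epsilon k^2$ pairs. I would then invoke the probabilistic method to fix one good $S$.

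The hard part will be the quantitative coupling of the two parameters so that the exponent in $k\le c\cdot(1/\epsilon)^{2d+1}$ comes out exactly right rather than merely polynomial. The factor $2d+1$ is delicate: it reflects that one pays a degree-$d$ Sauer--Shelah cost on each side of a bipartite pair (giving $2d$) plus one extra power of $1/\epsilon$ from the net/approximation size, and matching this precisely requires choosing the sample size $s\asymp (1/\epsilon)$ and using the sharpest available $\epsilon$-approximation bound for dual VC systems rather than a lossy Chernoff-plus-union-bound argument. I expect that the cleanest route is to cite the dual-shatter-function estimate and the packing bound for bounded VC-dimension systems directly, as in the Fox--Pach--Suk approach, and to organize the density transfer so that the homogeneity of the empirical patterns (which is automatic, since each pattern fixes adjacency to $S$) propagates to homogeneity of the genuine density. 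The remaining bookkeeping—converting the cell partition into an equipartition, handling the exceptional $\epsilon k^2$ pairs, and discharging the constant $c=c(d)$—is routine and I would defer it.
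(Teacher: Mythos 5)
First, a point of reference: the paper does not prove this theorem at all — it is quoted from Fox, Pach and Suk as part of the survey — so your attempt can only be compared with their original argument. That argument runs: apply Haussler's packing lemma to the neighbourhood set system $\{N(v):v\in V(G)\}$ at separation scale $\delta=\Theta(\epsilon^2)$ to get at most $c(1/\epsilon)^{2d}$ clusters of vertices with pairwise-similar neighbourhoods, prove by a counting argument that non-homogeneous pairs of parts have small total weight, and pay one more factor $1/\epsilon$ for the equipartition refinement (whence $2d+1$). Your sampling-plus-traces plan is in the same family, but it has a genuine gap at its core. The load-bearing claim in your proposal is that the empirical density between two pattern-classes ``is itself $0$ or $1$ for most pattern-pairs,'' which you justify as ``automatic, since each pattern fixes adjacency to $S$.'' This is a non-sequitur. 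Fixing the trace on $S$ means every vertex of a class $A$ has the same neighbours \emph{inside $S$}; it says nothing about how the vertices of $A$ attach to another class $B$, which lives almost entirely off $S$, and the quantity you call the empirical density can be anything in $[0,1]$. The half-graph is the instructive example: it has bounded VC-dimension, the trace classes on any sample are intervals, and the ``diagonal'' pairs of intervals have density about $1/2$ no matter how $S$ is chosen. The theorem is true because such pairs are few in weighted count — that is precisely its content — and nothing in your proposal proves this.

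What is missing is a two-step mechanism. (i) Same trace must be upgraded to genuinely close neighbourhoods: you need $S$ to be a $\delta$-net (a net, not an $\epsilon$-approximation; nets are cheaper and suffice) for the auxiliary system $\{N(u)\,\Delta\,N(v) : u,v\in V(G)\}$, which has VC-dimension $O(d)$; then $N(u)\cap S=N(v)\cap S$ forces $|N(u)\,\Delta\, N(v)|<\delta n$. (ii) A counting argument, entirely absent from your write-up, converts (i) into homogeneity of most pairs: after refining all classes into parts of equal size $\Theta(\epsilon n)$, the number of triples $(a;b,b')$ with $b,b'$ in a common part and $a$ adjacent to exactly one of them is at most $\delta n\sum_j|P_j|^2$ by (i), while every pair of parts with density in $[\epsilon,1-\epsilon]$ forces at least $\Omega(\epsilon)\,|P_i|\,|P_j|^2$ such triples; comparing the two bounds with $\delta=\Theta(\epsilon^2)$ bounds the number of bad pairs by $\epsilon K^2$. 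This also corrects your accounting of the exponent: the $2d$ is Haussler's (or Sauer--Shelah's) $d$ evaluated at scale $\delta=\Theta(\epsilon^2)$, not ``degree $d$ on each side of a bipartite pair,'' and the $+1$ comes from the refinement. Two further technical problems with the sampling route as you describe it: the partition is built from $S$ and you then reuse $S$ to certify densities between the parts, a circularity that requires a two-stage sample or a deterministic object; and random-sample net bounds carry a $\log(1/\delta)$ factor, so to hit $c(1/\epsilon)^{2d+1}$ exactly you should abandon sampling and take a maximal $\delta$-separated family of neighbourhoods, i.e., use Haussler's packing lemma directly, as Fox--Pach--Suk do.
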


 Erd\H os and Hajnal~\cite{erdos1989ramsey} proved that for
every proper hereditary graph class~$\Cc$ there exists a constant $c$
such that every \mbox{$n$-vertex} graph $G\in \Cc$ contains a
homogeneous set of size $e^{c\sqrt{\log n}}$. They conjectured that for
every proper hereditary graph class $\Cc$ there exists a
constant~$\delta$ such that every $n$-vertex graph $G\in \Cc$ must
contain a homogeneous set of size~$n^{\delta}$.  A graph class with
this property is said to have the \emph{Erd\H os-Hajnal property}. Fox, Pach and Suk~\cite{fox2019erdHos} also proved that graphs of
bounded VC-dimension \emph{almost} have the Erd{\H o}s-Hajnal
property:

\begin{theorem}[Fox, Pach and Suk~\cite{fox2019erdHos}]
  Every $n$-vertex graph with bounded VC-dimension contains a
  homogeneous set of size at least $e^{(\log n)^{1-o(1)}}$.
\end{theorem}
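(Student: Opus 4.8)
The tool to reach for is the bounded VC-dimension regularity lemma stated just above: unlike the tower-type bound in the general case, it produces only a \emph{polynomial} number of parts, and this is precisely the feature that makes an improvement over the generic Erd\H os--Hajnal bound $e^{c\sqrt{\log n}}$ plausible. The plan is to apply it at a carefully chosen scale $\epsilon$, pass to a bounded-size reduced graph, locate a large homogeneous structure there, lift it back to $G$ through a cleaning step, and then iterate, solving the resulting recursion.

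Concretely, I would first apply the regularity lemma with a parameter $\epsilon$ (to be optimised) to obtain an equipartition into $k$ classes $V_1,\dots,V_k$ with $k\le c(1/\epsilon)^{2d+1}$, all but $\epsilon k^2$ of whose pairs are $\epsilon$-homogeneous. I then form the \emph{reduced graph} $R$ on $[k]$ by putting $ij\in E(R)$ when $(V_i,V_j)$ is $\epsilon$-homogeneous with $\dens(V_i,V_j)>1-\epsilon$, and $ij\notin E(R)$ when $\dens(V_i,V_j)<\epsilon$; the at most $\epsilon k^2$ irregular pairs are discarded beforehand by deleting the $O(\sqrt{\epsilon}\,k)$ parts incident to many of them, which costs only a sub-polynomial fraction of the parts. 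A homogeneous set $I$ of $R$ is then a family of parts that are pairwise dense or pairwise sparse.

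To turn $I$ into a homogeneous set of $G$ I would clean: since an $\epsilon^3$-homogeneous pair is $\epsilon$-regular (see the Remark above), restricting each part $V_i$, $i\in I$, to a subset $V_i'$ of size $(1-o(1))|V_i|$ renders all cross-pairs genuinely complete (in the pairwise-dense case) or genuinely empty (in the pairwise-sparse case). Recursing into each $V_i'$ produces a homogeneous set $H_i$; in the pairwise-complete case the union of those $H_i$ that are \emph{cliques} is again a clique, and dually in the pairwise-empty case, so a pigeonhole on the clique/independent-set type of the $H_i$ multiplies the number of parts in $I$ by a within-part homogeneous factor. This yields a recursion of the shape $\mathrm{hom}(n)\ge t\cdot \mathrm{hom}\big((1-o(1))\,n/(2k)\big)$, where $t$ is the homogeneous number of the reduced graph.

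The two genuine difficulties are both quantitative. First, to push the exponent up to $1-o(1)$ rather than $1/2$, one must choose $\epsilon$ as a slowly decaying function of the current scale so that the recursion can be unfolded over roughly $\log n/\log\log n$ levels; because the number of parts is only polynomial in $1/\epsilon$, the size lost per level stays controlled, and it is the interplay between the per-level gain $t$ and the per-level shrinkage that fixes the final exponent — making this balance yield exactly $(\log n)^{1-o(1)}$ is the crux. Second, the clique-versus-independent-set mismatch (a pairwise-\emph{complete} family of parts whose within-part homogeneous sets happen to be \emph{independent}) must be absorbed at a cost of only a sub-polynomial factor; this, together with verifying that the reduced graph itself inherits bounded VC-dimension so that the recursion may legitimately be applied to it, is where the careful bookkeeping lies.
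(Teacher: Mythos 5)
The survey does not actually prove this theorem---it quotes it from Fox, Pach and Suk---so your argument has to stand on its own, and while its architecture (iterate the polynomial regularity lemma, pass to a reduced graph, recurse) is the right spirit, the cleaning step is false, and it is the step on which everything else rests. You claim that, since an $\epsilon^3$-homogeneous pair is $\epsilon$-regular, one can restrict each part $V_i$, $i\in I$, to a subset $V_i'$ of size $(1-o(1))|V_i|$ so that all cross-pairs become \emph{genuinely} complete or empty. The Remark you invoke only says that densities of large subsets stay within $\epsilon$ of the original density; it never produces density exactly $0$ or $1$. And no such restriction exists in general: if the bipartite graph between $V_i$ and $V_j$ (each of size $m$) is random with edge probability $\epsilon/2$, then the pair is $\epsilon$-homogeneous, yet with high probability every two subsets of size at least $5\epsilon^{-1}\log m$ span an edge, so not even subsets of polylogarithmic size---let alone of size $(1-o(1))m$---form an empty pair. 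Extracting genuinely homogeneous pairs of linear size from parts of a regular partition is exactly the strong Erd\H os-Hajnal property, which this survey ties to distal/semi-algebraic classes and which is \emph{not} available under bounded VC-dimension alone; if instead one tries to kill all cross edges among $t$ parts by vertex deletions, one needs $\epsilon \lesssim 1/(tm)$, and then the lemma's lower bound $k\geq 8/\epsilon$ on the number of parts shrinks the parts so much that your recursion never gets started.

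A sanity check shows the gap is fatal rather than cosmetic: if the cleaning worked as stated, your recursion $\mathrm{hom}(n)\geq t\cdot \mathrm{hom}\bigl((1-o(1))\,n/(2k)\bigr)$---even with only the trivial Ramsey bound $t\geq c\log k$ in the reduced graph, never mind $t=\mathrm{hom}(k)$---unfolds to $\mathrm{hom}(n)\geq n^{c'}$, i.e.\ it would prove the full polynomial Erd\H os-Hajnal conjecture for graphs of bounded VC-dimension. That conjecture is open, which is precisely why the quoted theorem asserts only $e^{(\log n)^{1-o(1)}}$. The mechanism that actually works (and the real source of the exponent $1-o(1)$) replaces your symmetric cleaning by a sequential one: having already built small homogeneous sets $H_1,\dots,H_{j-1}$ in previously processed parts, one deletes from $V_j$ the union of their neighborhoods before recursing there; this is feasible only when $\epsilon\lesssim (t\,h)^{-2}$, where $h$ is the size of the homogeneous sets being constructed. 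So $\epsilon$ must be chosen polynomially small \emph{in the final answer}, $k=\mathrm{poly}(1/\epsilon)$ blows up accordingly, and it is this self-referential constraint---not a free $(1-o(1))$-restriction---that caps the bound at $e^{(\log n)^{1-o(1)}}$. (Your two flagged difficulties are comparatively benign: the reduced graph does have VC-dimension at most $d$ once $\epsilon$ is below a constant depending on $d$, by choosing random representatives in the parts and a union bound; and the clique versus independent-set mismatch is standard bookkeeping.)
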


Following \cite{fox2008erdHos}, we say that 
a graph class $\Cc$ has the \emph{strong Erd\H os-Hajnal property}  if
there exists a constant $\delta>0$ such that every $n$-vertex graph
$G\in\Cc$ contains a homogeneous pair $(A,B)$, where $A$ and $B$ are  two disjoint sets  of
 at least $\delta n$ vertices each.

\subsection{Order-dimension}
The study of structures without {\em $k$-order property} or, equivalently with bounded order dimension, has been initiated by Shelah in his study of stability \cite{shelah1969stable}.
Order-dimension is also related to
Littlestone-dimension, which is a combinatorial parameter that
characterizes error bounds in online learning (see
\cite{chase2019model}).

\begin{definition}
  The \emph{order-dimension} of a graph $G$ is the largest integer
  $\ell$ such that there exist vertices
  $a_1,\ldots, a_\ell, b_1,\ldots, b_\ell\in V(G)$ with
  $\{a_i,b_j\}\in E(G)\Leftrightarrow i\leq j$.
\end{definition}

The notion of order-dimension is strongly related to the model theoretic
notion of \emph{stability}. 

\begin{definition}\label{def:stability}
A class $\Cc$ of structures is \emph{stable} if every graph class
definable in~$\Cc$ has bounded order-dimension. 

The class~$\Cc$ is \emph{monadically stable} if every graph class
definable in the class $\{\strM^+\mid \strM^+$ monadic lift of 
$\strM\in \Cc\}$ of all monadic lifts of structures from $\Cc$ has
bounded order-dimension. 
\end{definition}

\begin{theorem}[\cite{BS1985monadic}]
A class $\mathscr C$ is monadically stable if and only if every transduction of $\mathscr C$ is stable.
\end{theorem}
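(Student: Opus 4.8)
The plan is to reduce the statement to two facts: that monadic stability is literally the stability of the class of all monadic lifts of $\mathscr C$, and that transductions are closed under composition. Throughout, write $\mathscr C^{+}=\{\strM^{+} : \strM^{+}\text{ is a monadic lift of some }\strM\in\mathscr C\}$, so that by \Cref{def:stability} the class $\mathscr C$ is monadically stable exactly when $\mathscr C^{+}$ is stable, i.e.\ when every graph class definable in $\mathscr C^{+}$ has bounded order-dimension. I will also use repeatedly that order-dimension is monotone under taking induced subgraphs, so that a bound on the order-dimension of a graph bounds it for all of its induced subgraphs.

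For the forward direction, assume $\mathscr C$ is monadically stable and let $\mathscr D$ be a transduction of $\mathscr C$, realized by a monadic expansion $L^{+}$ and a simple interpretation $\mathsf I=(\nu,\eta)$. To show $\mathscr D$ is stable, take any graph class $\mathscr F$ definable in $\mathscr D$, say by a formula $\chi$ of dimension $k$ in the language $\{E\}$. Since every $G\in\mathscr D$ equals $\mathsf I(\strM^{+})$ for some $\strM^{+}\in\mathscr C^{+}$, substituting $\eta$ for each occurrence of $E$ and relativizing all quantifiers to $\nu$ turns $\chi$ into an $L^{+}$-formula $\chi^{\ast}$ of dimension $k$. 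Each $H\in\mathscr F$ is then the restriction of the graph $\chi^{\ast}(\strM^{+})$ to the vertex set $\nu(\strM^{+})^{k}$, hence an induced subgraph of the graph $k$-dimensionally definable in $\strM^{+}\in\mathscr C^{+}$ by $\chi^{\ast}$. As the data $\nu,\eta,\chi$ are fixed, monadic stability bounds the order-dimension of these latter graphs uniformly, and monotonicity transfers the bound to $\mathscr F$. Thus $\mathscr D$ is stable.

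For the backward direction, assume every transduction of $\mathscr C$ is stable; in particular, taking the class defined in a graph by $E(x,y)$ itself, every transduction of $\mathscr C$ has bounded order-dimension \emph{as a class}. To prove monadic stability, let $\mathscr F$ be a graph class definable in $\mathscr C^{+}$ by a formula $\chi(\bar x,\bar y)$ of dimension $k$ over a fixed monadic expansion $L^{+}$. It suffices to exhibit $\mathscr F$ as a class definable in one transduction $\mathsf T$ of $\mathscr C$: stability of $\mathsf T(\mathscr C)$, which controls definable classes of \emph{every} dimension, then bounds the order-dimension of $\mathscr F$. The transduction $\mathsf T$ first performs the lift to $L^{+}$, re-introducing exactly the unary predicates that $\chi$ consults, together with a bounded set of marker predicates isolating the coordinates of the witnessing tuples; its interpretation then builds, on the marked representatives, the edge relation that $\chi$ would assign to the corresponding $k$-tuples, so that (a subgraph isomorphic to) each $H\in\mathscr F$ reappears in $\mathsf T(\mathscr C)$.

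The main obstacle is exactly this last composition/encoding step: one must show that ``definable over a monadic lift'' collapses to ``definable in a transduction'', which requires both \emph{absorbing the monadic colouring} of the lift into the graph output by the interpretation (routine, since the lift inside $\mathsf T$ supplies the needed predicates) and \emph{reconciling the one-dimensional interpretations} used by transductions with the $k$-dimensional formulas permitted in \Cref{def:stability}. The dimension bookkeeping is the delicate point: recovering the remaining coordinates of a $k$-tuple from a single marked representative, under only a \emph{bounded} monadic colouring, is not possible for arbitrary witnesses, and is where one invokes a Ramsey-type normalization of the half-graphs witnessing large order-dimension (passing to sub-half-graphs with a fixed equality pattern and consistently markable coordinates) together with the robustness of monadic stability under increasing arity. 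Once this composition principle is established, the theorem follows from the two short arguments above.
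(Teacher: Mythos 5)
Your forward direction is correct and is the standard argument: substituting $\eta$ for $E$, relativizing quantifiers to $\nu$, and padding the vertex set shows that any class definable in a transduction of $\mathscr C$ consists of induced subgraphs of a class definable in the monadic lifts of $\mathscr C$, and order-dimension is monotone under induced subgraphs. Note, however, that the paper itself offers no proof of this theorem --- it is imported from Baldwin--Shelah \cite{BS1985monadic} --- so what your attempt must be measured against is whether it actually reconstructs that result.

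It does not: the backward direction has a genuine gap, and it sits exactly where the whole content of the theorem lies. You assume every transduction of $\mathscr C$ is stable and, given a class $\mathscr F$ defined in the lifts $\mathscr C^{+}$ by a $k$-dimensional formula $\chi$, you propose a transduction $\mathsf T$ whose output ``builds, on the marked representatives, the edge relation that $\chi$ would assign to the corresponding $k$-tuples.'' As stated this cannot work: a transduction may only add \emph{boundedly many} unary predicates, while there are $|M|^{k}$ candidate tuples; distinct witnessing tuples may overlap or share coordinates, so a single marked element does not determine its tuple; and the output of $\mathsf T$ is a bare graph, exposing one binary relation, so no first-order formula over it can ``consult'' $\chi$, which depends on the full $L^{+}$-structure. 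You acknowledge all of this and then invoke ``a Ramsey-type normalization of the half-graphs'' and ``robustness of monadic stability under increasing arity'' to repair it --- but that normalization (reducing $k$-dimensional instability over a monadic expansion to instability of a one-dimensional formula after further monadic marking) is precisely the hard arity-reduction lemma of Baldwin--Shelah, not a routine verification. Declaring ``once this composition principle is established, the theorem follows'' leaves the main step unproven; in effect the argument assumes a statement essentially equivalent to the theorem. A correct write-up would have to take the contrapositive --- from half-graph witnesses $\bar a_{i},\bar b_{j}$ for $\chi$ of unbounded length, extract (via Ramsey arguments on equality/coincidence patterns of coordinates) a uniform configuration that can be marked by finitely many unary predicates and from which a one-dimensional interpretation recovers arbitrarily large half-graphs --- and carry out that extraction in detail.
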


\begin{definition}
  Let $\epsilon>0$, let $G$ be a graph and let $A\subseteq V(G)$. The
  set $A$ is called \emph{$\epsilon$-good} when for every $b\in V(G)$
  either
  \[|\{a\in A \mid \{a,b\}\in E(G)|\leq \epsilon|A|\]
  or \[|\{a\in A \mid \{a,b\}\in E(G)|\geq (1-\epsilon)|A|.\]
  In the following, we write $E(a,b)$ for the boolean value
  \emph{true} if $\{a,b\}\in E(G)$ and \emph{false} otherwise. Then
  the above reads as: for every $b\in V(G)$ there exists a boolean
  value $\mathsf t(b/A)$ such that
  \[
  |\{a\in A\mid E(b,a)\neq\mathsf t(b/A)\}|\leq \epsilon|A|.
  \]

  The set $A$ is \emph{$\epsilon$-excellent} if $A$ is $\epsilon$-good
  and if $B\subseteq V(G)$ is $\epsilon$-good, then there exists a
  boolean value $\mathsf t(A/B)$ such that
  \[
  |\{a\in A\mid \mathsf t(a/B)\neq \mathsf t(A/B)\}|\leq \epsilon |A|.
  \]
  A pair $(A,B)$ satisfying this latter condition is called
  \emph{$\epsilon$-uniform}. In other words, a pair $(A,B)$ is
  $\epsilon$-uniform if all but at most $\epsilon |A|$ vertices of $A$
  have a degree in $B$ that is smaller than $\epsilon |B|$ or all but
  at most $\epsilon |A|$ vertices of $A$ have a degree in $B$ that is
  greater than $(1-\epsilon)|B|$.
\end{definition}

\begin{theorem}[Stable regularity lemma; Malliaris and Shelah~\cite{malliaris2014regularity}]
\label{thm:stable_reg}
For every~$\ell$ and every \mbox{$\epsilon>0$} there exist $M$ and $N$
such that for every graph $G$ with $n\geq N$ vertices of order-dimension
at most $\ell$, there is an equipartition of the vertex set into $k$
classes $V_1,\ldots, V_k$, $k\leq M$, where each of the pieces is
$\epsilon$-excellent, all of the pairs are $\epsilon$-uniform and if
$\epsilon<1/2^{2^\ell}$, then
$M(\epsilon,\ell)\leq
(3+\epsilon)\left(\frac{8}{\epsilon}\right)^{2^\ell}$.
\end{theorem}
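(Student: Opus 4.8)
The plan is to build the partition entirely out of \emph{$\epsilon$-excellent} sets and to observe that uniformity of the pairs then comes for free. Indeed, suppose we have partitioned $V(G)$ into parts $V_1,\dots,V_k$ that are all $\epsilon$-excellent. Each such part is in particular $\epsilon$-good, so for any two parts the set $V_j$ is $\epsilon$-good, and the very definition of excellence of $V_i$ then supplies a Boolean value $\mathsf t(V_i/V_j)$ witnessing that $(V_i,V_j)$ is $\epsilon$-uniform. Thus the two substantive assertions of the theorem---excellence of the parts and uniformity of the pairs---both reduce to a single task: find an \emph{equipartition} of $V(G)$ into boundedly many $\epsilon$-excellent parts. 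The whole difficulty is therefore concentrated in producing excellent sets in abundance, and the threshold $N$ will be whatever makes the size estimates below meaningful.

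The core lemma I would establish is: \emph{in a graph of order-dimension at most $\ell$, every vertex set $A$ with $|A|\ge N$ contains an $\epsilon$-excellent subset $A'$ with $|A'|\ge(\epsilon/8)^{2^\ell}|A|$.} To prove it I introduce a local branching rank $r(A)$ for the edge relation, defined as the largest $t$ for which one can iterate $t$ times the operation of splitting the current set $S$ into $S\cap N(b)$ and $S\setminus N(b)$ for some vertex $b$, always keeping both pieces of size at least a fixed $\epsilon$-fraction of $S$. The key stability input is that order-dimension at most $\ell$ bounds this rank by $2^\ell$: a branching tree of height $h$ forces, via the standard ladder-versus-tree comparison, a semi-induced half-graph whose order grows with $h$, so that $h\le 2^\ell$. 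Granting the rank bound, I argue by induction on $r(A)$. If $A$ is already $\epsilon$-excellent we stop. If $A$ fails to be $\epsilon$-good, a witnessing vertex $b$ splits $A$ into two $\epsilon$-large pieces, each of strictly smaller rank, and I recurse into the larger one, losing only a factor $\ge\epsilon/8$ in size.

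The subtle case is when $A$ is good but not excellent: here the witness is an entire good set $B$, and the offending partition of $A$ is by the derived relation $a\mapsto\mathsf t(a/B)$ rather than by a single edge. To bring this back into the scope of the rank I use that order-dimension at most $\ell$ implies bounded VC-dimension (stable implies dependent), so by an $\epsilon$-approximation $\mathsf t(\cdot/B)$ agrees, on all but an $\epsilon$-fraction of $A$, with adjacency to a bounded sample of vertices of $B$; the corresponding split is then, up to a negligible error, a single-instance split and again drops the rank. Iterating at most $2^\ell$ times yields the claimed excellent subset. With this lemma the partition is assembled greedily: I repeatedly extract from the current remainder a large $\epsilon$-excellent set of size $\ge(\epsilon/8)^{2^\ell}$ times the remainder, delete it, and continue until the leftover drops below $\epsilon n$. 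Since each extracted set occupies at least an $(\epsilon/8)^{2^\ell}$-fraction, at most $(8/\epsilon)^{2^\ell}$ excellent sets are produced; absorbing the tiny leftover and chopping each excellent set into equal-size blocks---a proportional subset of an excellent set remains excellent after adjusting $\epsilon$ by a constant factor---converts this into an equipartition, and a careful bookkeeping of the rounding yields $M(\epsilon,\ell)\le(3+\epsilon)(8/\epsilon)^{2^\ell}$ when $\epsilon<1/2^{2^\ell}$.

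The main obstacle is precisely the good-but-not-excellent case. Excellence is defined by a universal quantifier over \emph{all} $\epsilon$-good sets $B$, so it is not manifestly a rank-decreasing local condition, and the entire argument hinges on converting an excellence defect into a genuine edge-formula split. The reduction via bounded VC-dimension and $\epsilon$-approximations is what makes this possible, and ensuring that the approximation errors compose cleanly across the at most $2^\ell$ levels of the induction---so that the cumulative size loss is of order exactly $(\epsilon/8)^{2^\ell}$ and the part count matches $(3+\epsilon)(8/\epsilon)^{2^\ell}$---is the delicate quantitative heart of the proof.
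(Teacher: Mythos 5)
First, a framing remark: the paper does not prove this statement---it quotes it as a known theorem of Malliaris and Shelah \cite{malliaris2014regularity}---so your proposal can only be measured against their argument. Your architecture does match theirs in outline (all difficulty concentrated in extracting large $\epsilon$-excellent subsets; uniformity of pairs then free, since excellence of $V_i$ tested against the good set $V_j$ is exactly $\epsilon$-uniformity; greedy assembly into an equipartition). But the proof of your core lemma has a genuine flaw: the termination argument. Stability-type ranks bound the depth of \emph{full binary trees} of $\epsilon$-large splits, not the length of a descending chain. When $A$ splits into $\epsilon$-large pieces $A_0,A_1$, one only gets $r(A)\geq 1+\min\bigl(r(A_0),r(A_1)\bigr)$; only \emph{one} piece is guaranteed to drop in rank, and it need not be the larger one you recurse into. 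Concretely, let $A$ be a disjoint union of roughly $1/(2\epsilon)$ cliques, each of size $2\epsilon|A|$: this graph has bounded order-dimension and rank $O(1)$ (once a split isolates a clique, that clique admits no further $\epsilon$-large split, so no deep full tree exists), yet the chain obtained by repeatedly splitting off one clique and following the larger piece has length about $1/(2\epsilon)\gg 2^\ell$, with every set along it failing to be $\epsilon$-good. So ``each piece has strictly smaller rank, recurse into the larger one'' is false, and with it your depth bound $2^\ell$, the size bound $(\epsilon/8)^{2^\ell}|A|$, and the final count $(3+\epsilon)(8/\epsilon)^{2^\ell}$ are all unjustified. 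Malliaris and Shelah instead split \emph{every} non-excellent node, building the whole binary tree, and show that if no node up to depth $2^\ell$ were excellent, the resulting full tree of $\epsilon$-large splits would yield a half-graph of height $>\ell$; hence some node at depth $\leq 2^\ell$, of size $\geq \epsilon^{2^\ell}|A|$, is excellent.

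The second gap is in the good-but-not-excellent case. An $\epsilon$-approximation replaces $\mathsf t(\cdot/B)$ by a \emph{majority vote over a bounded sample} $S\subseteq B$, which is a Boolean combination of $|S|$ neighbourhood conditions, not ``a single-instance split''; it therefore does not interact with your rank at all, and the claimed reduction is a non sequitur. The tool that actually works is elementary averaging, with no appeal to VC-dimension: if $A_1,A_0$ are the two sides of the defect split, then every $a\in A_1$ is adjacent to at least $(1-\epsilon)|B|$ vertices of $B$ and every $a\in A_0$ to at most $\epsilon|B|$, so by Markov's inequality some single $b\in B$ satisfies $|N(b)\cap A_1|\geq(1-2\epsilon)|A_1|$ and $|A_0\setminus N(b)|\geq(1-2\epsilon)|A_0|$; this realizes the derived split by a genuine vertex up to small error, and this union-bound realization is precisely how Malliaris--Shelah convert a tree of majority splits into an order configuration. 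So that step of your plan is repairable, but the mechanism you name is the wrong one. A further minor issue: ``a proportional subset of an excellent set remains excellent after adjusting $\epsilon$'' hides a quantifier problem, since $c\epsilon$-excellence must be tested against all $c\epsilon$-good sets, which need not be $\epsilon$-good, so this requires its own argument rather than being immediate.
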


Malliaris and Shelah also showed that graphs of bounded
order-dimension have the Erd\H os-Hajnal property.

\begin{theorem}[\cite{malliaris2014regularity, chernikov2016definable}]
  For every integer $\ell$ there is a constant $\delta > 0$ such that
  every $n$-vertex graph $G$ of order-dimension at most $\ell$
  contains a homogeneous subset of size at least $n^\delta$.
\end{theorem}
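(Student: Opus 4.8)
The plan is to combine the stable regularity lemma (\Cref{thm:stable_reg}) with a balanced recursion whose bottom-level representatives assemble into a cograph, and then to exploit the perfection of cographs to extract a large clique or independent set. First I would observe that $\bar G$ has order-dimension at most $\ell+1$ (a half-graph pattern for $\neg E$ becomes one for $E$ after reversing the order on one side), so it is enough to produce a homogeneous set, i.e.\ a clique \emph{or} an independent set, of size $n^\delta$; we are free to use whichever color the recursion hands us.

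I would fix $\epsilon=\epsilon(\ell)$ small (below $1/2^{2^\ell}$) and set $M=M(\epsilon,\ell)$, $N=N(\epsilon,\ell)$ as in \Cref{thm:stable_reg}. Applying the lemma to $G$ gives an equipartition into $k\le M$ parts, each $\epsilon$-excellent and of size at least $\lfloor n/k\rfloor$, with every pair $\epsilon$-uniform; for $\epsilon<1/2$ each uniform pair is unambiguously of \emph{complete type} or of \emph{empty type} according to which alternative in the definition of $\epsilon$-uniformity holds. I would then grow a rooted binary tree: at a node carrying a part $P$ with $|P|\ge N$, regularize the graph on $P$, choose any two of the resulting sub-parts (their pair is automatically complete- or empty-type), and make them the two children. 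Each step shrinks the part by a factor at most $1/(2M)$, so the recursion survives to depth $r=\lfloor\log_{2M}n\rfloor$ (up to an additive constant loss to accommodate the cutoff $|P|\ge N$), where every leaf still carries a part of size at least $1$; there are $2^r$ leaves.

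Next I would pick one representative vertex in each leaf part and let $C$ be the induced graph on these $2^r$ vertices. The core claim is that two representatives are adjacent \emph{exactly} when the chosen pair at their least common ancestor has complete type. Granting this, $C$ is a cograph, its cotree being the recursion tree with each internal node acting as a join (complete type) or a disjoint union (empty type). Cographs are perfect, so $\omega(C)\cdot\alpha(C)\ge|C|=2^r$, whence $\max(\omega(C),\alpha(C))\ge 2^{r/2}=n^{\delta}$ with $\delta=\tfrac{\log 2}{2\log(2M)}>0$ depending only on $\ell$; a maximum clique or independent set of $C$ is the desired homogeneous subset of $G$.

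The hard part will be the core claim, that is, controlling the $\epsilon$-errors so that the representatives \emph{exactly} realize the cotree. For a fixed representative $b$ and a leaf part $P$, $\epsilon$-goodness only guarantees that $b$ is almost-complete or almost-empty to $P$, and that $b$ is almost-complete to the ancestor of $P$ at the relevant meet; a priori the few exceptional non-neighbours of $b$ could concentrate inside the much smaller descendant $P$, and naively such errors compound over the depth $r\sim\log n$. This is exactly where the bound on the order-dimension is indispensable: it must force the adjacency type $\mathsf t(b/\cdot)$ to be \emph{inherited} by $\epsilon$-excellent descendants, since a family of excellent descendants along which the type drifted would exhibit a half-graph against $b$ of length exceeding $\ell+1$. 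Making this inheritance quantitative along the entire tree --- equivalently, selecting all representatives to be simultaneously generic with respect to the hierarchy of excellent sets, including against vertices external to the part being regularized --- is the technical heart of the argument and is precisely the content of the excellent-set analysis underlying \Cref{thm:stable_reg}; once it is in place, the cograph structure and hence the polynomial-size homogeneous set follow.
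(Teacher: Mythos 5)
First, a point of reference: the paper does not prove this theorem at all --- it is quoted from Malliaris--Shelah and Chernikov--Starchenko --- so your proposal has to stand on its own, and it does not. The fatal gap is exactly the step you call the ``core claim''. The $\epsilon$-uniformity of the pair $(P_1,P_2)$ chosen at a least common ancestor constrains only the behaviour of all but $\epsilon|P_1|$ vertices of $P_1$, and, for each such vertex, all but $\epsilon|P_2|$ vertices of $P_2$. Your leaf parts sit at depth $\Theta(\log n)$ below the LCA, so their size is $|P_1|\cdot(2M)^{-\Theta(\log n)}$, which is smaller than the exceptional set of size $\epsilon|P_1|$ by a polynomial factor: the exceptional vertices can entirely swallow the leaf parts, so neither an arbitrary, a greedy, nor a random choice of representatives is constrained in any way by the pair type at the LCA. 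A counting fix would need $\epsilon$ below the reciprocal of the part number, i.e.\ $\epsilon\le 1/(4M(\epsilon,\ell))$, which is impossible since $M(\epsilon,\ell)\ge 8/\epsilon$ in \Cref{thm:stable_reg}. So exact realization of the cotree by representatives cannot be obtained by quantitative bookkeeping; it would require a genuine type-inheritance lemma for nested sets whose relative size is far below $\epsilon$, and your one-line justification (``drift would exhibit a half-graph of length exceeding $\ell+1$'') does not produce such a lemma: a single vertex whose majority type drifts along a nested chain of good sets creates no ladder at all --- half-graphs need many vertices on both sides.

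Moreover, the claim that this inheritance ``is precisely the content of the excellent-set analysis underlying \Cref{thm:stable_reg}'' is not correct. What that analysis (and bounded order-dimension) gives is that chains of type-splits with \emph{distinct witnesses} have length at most roughly $\ell$; this bounds the Malliaris--Shelah recursion by a \emph{constant} depth, which is why their partition has boundedly many parts, and it says nothing about type drift along your $\Theta(\log n)$-deep chains. A conceptual cross-check shows the architecture itself is insufficient: your scheme (recursive regularization, one well-behaved pair of children per node, representatives, perfection of the resulting cograph) uses the regularity lemma only as a black box, and it runs verbatim for graphs of bounded VC-dimension using the ultra-strong regularity lemma quoted in the paper (all but $\epsilon k^2$ pairs are $\epsilon$-homogeneous, so a good pair of children always exists). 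If the argument were sound, it would prove polynomial Erd\H{o}s--Hajnal for bounded VC-dimension, whereas the best known bound there is $e^{(\log n)^{1-o(1)}}$ and the polynomial statement is open. The proofs in the references cited by the paper avoid this obstruction by restructuring the induction: they recurse to \emph{constant} depth (on a rank or ladder index bounded in terms of $\ell$), paying a polynomial factor in the set size at each of the boundedly many steps, rather than recursing to depth $\log n$ and paying constant factors. Your peripheral observations (order-dimension of the complement, perfection giving $\omega(C)\alpha(C)\ge|C|$) are fine, but the heart of the proof is missing.
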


\subsection{Weakly-sparse classes}

Forbidding a biclique (i.e.\ a complete bipartite graph $K_{s,s}$) as
a semi-induced subgraph is equivalent, by a standard Ramsey argument,
to forbidding a clique and an induced biclique, which is in turn
equivalent to excluding some biclique as a (non induced) subgraph.  A
monotone class has bounded VC-dimension if and only if it excludes a
biclique.  Excluding a biclique as a subgraph implies strong
properties (see for instance \cite{dvovrak2018induced,
  kuhn2004induced, msrw}).  A class $\mathscr C$ that excludes a
biclique as a subgraph is called {\em weakly sparse} \cite{msrw}.


\begin{observation}
  For all integers $s$ and $k$ and every $\epsilon>0$ there exists an
  integer~$n$ such that every $K_{s,s}$-free graph $G$ of order at least $n$
  has the property that every equipartition of $V(G)$ in $k$ parts is
  $\epsilon$-uniform.
\end{observation}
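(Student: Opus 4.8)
The plan is to reduce everything to the K\H{o}v\'{a}ri--S\'{o}s--Tur\'{a}n theorem, which controls the number of edges between two parts, and then to read off the uniformity condition by a one-line counting argument. The point is that $G$ is $K_{s,s}$-free, so for any two disjoint sets $X,Y\subseteq V(G)$ the semi-induced bipartite graph $G[X,Y]$ is again $K_{s,s}$-free; hence the K\H{o}v\'{a}ri--S\'{o}s--Tur\'{a}n theorem supplies a constant $c=c(s)$, depending only on $s$, such that $|E(X,Y)|\le c\,|X|\,|Y|^{1-1/s}$ once $|X|,|Y|$ exceed a threshold depending only on $s$. This bound is subquadratic in the part sizes, whereas a violation of $\epsilon$-uniformity forces a genuinely quadratic number of edges; the contradiction between these two facts is the whole proof.

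Concretely, I would fix an equipartition $V_1,\dots,V_k$ and an arbitrary ordered pair $(A,B)=(V_i,V_j)$ with $i\ne j$, and verify the \emph{first} alternative in the definition of $\epsilon$-uniformity: that all but at most $\epsilon|A|$ vertices of $A$ have fewer than $\epsilon|B|$ neighbours in $B$. Suppose this fails and set $A^\ast=\{a\in A:\ a\text{ has at least }\epsilon|B|\text{ neighbours in }B\}$, so that $|A^\ast|>\epsilon|A|$ by assumption. Counting the edges incident to $A^\ast$ then gives $|E(A,B)|\ge |A^\ast|\cdot\epsilon|B|>\epsilon^{2}|A|\,|B|$.

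It remains to contradict this with the upper bound. In an equipartition into $k$ classes every part has size between $m\coloneqq\lfloor n/k\rfloor$ and $m+1$, so $m\le|A|,|B|\le m+1$ and $m\to\infty$ as $n\to\infty$. Combining the two estimates yields $\epsilon^{2}|A|\,|B|<c\,|A|\,|B|^{1-1/s}$, that is $\epsilon^{2}|B|^{1/s}<c$, and therefore $|B|<(c/\epsilon^{2})^{s}$. Hence as soon as $m\ge (c/\epsilon^{2})^{s}$ the assumed failure is impossible and the first alternative holds. Since the same computation applies verbatim to every ordered pair of distinct parts, it suffices to take $n$ large enough that $\lfloor n/k\rfloor\ge (c/\epsilon^{2})^{s}$ (and exceeds the K\H{o}v\'{a}ri--S\'{o}s--Tur\'{a}n threshold) --- for instance $n\ge k\,(c/\epsilon^{2})^{s}+k$ --- to conclude that every pair $(V_i,V_j)$, and hence the whole equipartition, is $\epsilon$-uniform.

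The argument is essentially routine, and there is no serious obstacle: the only real content is the invocation of K\H{o}v\'{a}ri--S\'{o}s--Tur\'{a}n, which is precisely where $K_{s,s}$-freeness (weak sparsity) enters. The sole point requiring a little care is that the parts have size only approximately $n/k$, but since the relevant bound degrades only through the factor $|B|^{1/s}$ this is harmless. It is worth observing that it is always the first alternative --- almost all degrees being small --- that holds here, reflecting the fact that between linear-sized parts a weakly sparse graph has vanishing edge density.
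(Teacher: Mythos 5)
Your proposal is correct and takes essentially the same approach as the paper's proof: both rest on the K\H{o}v\'{a}ri--S\'{o}s--Tur\'{a}n bound for $K_{s,s}$-free graphs followed by an elementary count, the paper bounding the global average degree by $\epsilon^2 n/k^2$ and hence the number of vertices of degree exceeding $\epsilon n/k$, while you count edges between a fixed pair of parts and reach a contradiction. The only small difference is that your per-pair argument covers pairs of distinct parts, whereas the paper's global degree bound also handles pairs $(V_i,V_i)$; if those are intended to be included, your argument extends at once using $\mathrm{ex}(m,K_{s,s})\le C m^{2-1/s}$ inside a single part.
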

\begin{proof}
Fix integers $s,k$ and $\epsilon>0$.
  According to \cite{kovari1954problem}, there exist a constant~$C$
  such that ${\rm ex}(n,K_{s,s})\leq C n^{2-\frac{1}{s}}$.  Thus if $n$ is
  sufficiently large, the average degree~$\bar{\rm d}(G)$ of $G$
  is at most $\epsilon^2 n/k^2$.  It follows that $G$ contains at most
  $\epsilon n/k$ vertices of degree greater than $\epsilon n/k$. It
  follows that every equipartition of $V(G)$ into $k$ parts is
  $\epsilon$-uniform.
\end{proof}

However, it is not clear whether one can require that all the parts are $\epsilon$-excellent in some
  partition of size $(1/\epsilon)^{cs}$, for some universal constant
  $c$.

\subsection*{Classes in the age of an infinite structure}

We now consider classes included in the age of infinite
structures. Precisely, starting from some ``nice'' infinite structure
$\strM$ (like the real field $(\R,+,\cdot,<,0,1)$, the dense linear order~$(\Q,<)$, or the infinite set~$\N$) we first
construct an infinite graph $\mathbf U$ definable in $\strM$, which is
an infinite graph whose vertex set is $M^d$ for some $d$, and whose
adjacency is given by a definable relation. We then consider
classes~$\Cc$ of graphs with
\mbox{$\Cc\subseteq{\rm Age}(\mathbf U)$}. Examples of this general scheme are abundant in both model theory and combinatorics.

\subsection{Semi-algebraic and distal-defined classes}

A class $\mathscr C$ is {\em semi-algebraic} if there are polynomials 
\[f_1,\dots,f_t\in\mathbb R[x_1,\dots,x_d,y_1,\dots,y_d]\]
and a Boolean function $\Phi$ such that for every graph
$G\in\mathscr C$ there exists a mapping
$p\colon V(G)\rightarrow\mathbb R^d$ with
\[
\{u,v\}\in E(G)\iff\Phi(f_1(p(u),p(v))\geq 0; \dots;
f_t(p(u),p(v))\geq 0)=1.
\]
In other words, if we let $\mathbf U$ to be the graph with vertex set
$\mathbb R^d$ and edge set
\[
E(\mathbf U)=\{(\mathbf x,\mathbf y)\in \mathbb R^d\times\mathbb
R^d\mid \Phi(f_1(\mathbf x,\mathbf y)\geq 0; \dots; f_t(\mathbf
x,\mathbf y)\geq 0)=1\},
\]
then every graph in $\mathscr C$ is a finite induced subgraph of
$\mathbf U$. Note that real closed fields have quantifier elimination
and hence the above is equivalent to stating that the graph
$\mathbf U$ is definable in $(\R,+,\cdot,<,0,1)$.  We say that
$\mathscr C$ has complexity~$(t,D)$ if each polynomial $f_i$ with
$1\leq i\leq t$ has degree at most $D$.

\begin{example}
  Intersection graphs of segments and intersection graphs of balls
  in~$\mathbb R^d$ are examples of semi-algebraic classes.
\end{example}

Alon, Pach, Pinchasi, Radoi\v ci\'c, and
Sharir~\cite{alon2005crossing} proved that for semi-algebraic graphs
with bounded description complexity, the pairs in the regularity lemma
can be required to be homogeneous instead of $\epsilon$-regular.  This
result has been extended by Fox, Gromov, Lafforgue, Naor, and
Pach~\cite{fox2012overlap} to $k$-uniform hypergraphs and, in this
more general setting, Fox, Pach, and Suk~\cite{fox2014density} proved
that a polynomial number of parts are sufficient and that
semi-algebraic classes have the strong Erd\H os-Hajnal property.
 
 \begin{theorem}[Semi-algebraic regularity lemma; Fox, Pach, and Suk~\cite{fox2014density}]
 \label{thm:sa}
   For all integers $d,D,t\geq 1$ there exists a constant $c$ such
   that for every $0<\epsilon<1/2$ and every semi-algebraic graph $G$
   in~$\R^d$ with complexity $(t,D)$ there is an equipartition
   $V_1,\ldots, V_k$ of the vertex set into~$k$ classes with
   $k\leq (1/\epsilon)^c$ such that all but at most $\epsilon k^2$
   pairs are homogeneous.
\end{theorem}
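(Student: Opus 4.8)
The plan is to forget the abstract class $\Cc$ and work with the single universal graph witnessing semi-algebraicity: fix the polynomials $f_1,\dots,f_t\in\R[x_1,\dots,x_d,y_1,\dots,y_d]$ of degree at most $D$ and the Boolean function $\Phi$, let $\mathbf U$ be the graph on $\R^d$ they define, and set $P=\{p(v):v\in V(G)\}$, a set of $n=|V(G)|$ points in $\R^d$. Since $G=\mathbf U[P]$, it is enough to partition $\R^d$ (equivalently $P$) into $k\le(1/\epsilon)^c$ cells so that all but an $\epsilon$-fraction of the cell pairs $(C,C')$ are \emph{homogeneous}, meaning that $C\times C'$ lies entirely inside $E(\mathbf U)$ or entirely inside its complement. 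Turning such a partition into an equipartition of $V(G)$ is routine: one cuts the cells into blocks of a common size, distributes the $o(1)$-fraction of leftover vertices arbitrarily, and charges the few pairs meeting the leftover to the bad-pair budget. So I concentrate on producing the cell partition with the stated bounds.

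The structural fact that makes the semi-algebraic case stronger than the bounded-VC-dimension case is that adjacency is governed by sign patterns of bounded-degree polynomials. For a fixed $q\in\R^d$, whether a point $x$ is adjacent to $q$ depends only on the vector $(\operatorname{sign}f_1(x,q),\dots,\operatorname{sign}f_t(x,q))$, that is, only on which cell of the arrangement of the $t$ hypersurfaces $\{x:f_i(x,q)=0\}$ the point $x$ occupies. By the Milnor--Thom/Warren bound, an arrangement of $m$ such degree-$\le D$ hypersurfaces in $\R^d$ has only $(mD)^{O(d)}$ cells; this both yields the bounded VC-dimension used implicitly and, crucially, powers a cutting lemma: for the $tn$ dual hypersurfaces coming from all points of $P$ and any parameter $r$, one can partition $\R^d$ into $r^{O(d)}$ cells each crossed by at most a $(1/r)$-fraction of the hypersurfaces. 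Inside a cell $C$, every point $q$ whose hypersurfaces avoid $C$ is uniformly adjacent (or uniformly non-adjacent) to all of $C$; hence a cell pair can fail homogeneity only because of these few crossing hypersurfaces.

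The construction then proceeds by recursion on the dimension $d$, via polynomial partitioning. First I would choose a partitioning polynomial $g$ of degree $O(r^{1/d})$ whose zero set $Z(g)$ cuts $\R^d\setminus Z(g)$ into $O(r)$ open cells, each containing at most $n/r$ points of $P$ and crossed by at most $tn/r$ dual hypersurfaces, with $r$ chosen as a function of $\epsilon,d,D,t$ alone. For the open cells, the crossing bound lets me charge the non-homogeneous pairs to hypersurface--cell incidences, an $\epsilon$-fraction after the right choice of $r$; the points lying on $Z(g)$ are confined to an at most $(d-1)$-dimensional variety on which the restricted relation is again semi-algebraic of complexity bounded in terms of $d,D,t$, so the induction hypothesis supplies a good sub-partition there. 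Summing the part counts multiplicatively and the bad pairs additively over the at-most-$d$ levels of the recursion --- with all cutting parameters depending only on $\epsilon$ and the complexity and never on $n$ --- delivers a partition into $k\le(1/\epsilon)^{c}$ parts with at most $\epsilon k^2$ non-homogeneous pairs. Alternatively, one may first establish the strong Erd\H os--Hajnal property for semi-algebraic classes, as stated above, and boost a single linear-size homogeneous pair into a full partition by iterated refinement; the geometric, constant-factor progress at each of the $O(\log(1/\epsilon))$ rounds is exactly what keeps the number of parts polynomial rather than tower-type.

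The main obstacle is twofold, and it is precisely where the naive approach breaks. A single cutting with a constant parameter $r$ gives a bad-pair fraction that still grows with $n$, so no one-shot application can yield an $n$-independent bound; the dimension recursion (or the iteration of a linear-size homogeneous pair) is essential, and the delicate accounting is to show that the number of parts accumulated across the recursion stays polynomial in $1/\epsilon$ and the bad pairs stay below $\epsilon k^2$ uniformly in $d$. Equally essential is that we demand \emph{exact} homogeneity rather than the $\epsilon$-homogeneity that bounded VC-dimension alone would provide: this is unattainable from VC-type arguments and forces the use of the algebraic cell decomposition, whose only vulnerable points are the lower-dimensional strata on the partitioning varieties --- controlling the bad pairs they contribute is the technical heart of the argument.
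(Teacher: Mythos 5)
This theorem is one the paper quotes from Fox, Pach, and Suk \cite{fox2014density} without proof, so there is no internal proof to compare against; your proposal can only be measured against the original argument, whose main ingredients (cuttings for bounded-degree algebraic surfaces, sign-pattern counting, and the resulting strong Erd\H os--Hajnal property) you correctly name.

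There is, however, a genuine gap at the central counting step. Your plan is to take a $(1/r)$-cutting of the $tn$ dual surfaces and to charge non-homogeneous cell pairs to surface--cell crossings. The homogeneity logic itself is fine: if \emph{every} point of $C$ is uniform towards $C'$ and vice versa, then $(C,C')$ is homogeneous. But the cutting only guarantees that all but at most $tn/r$ points have surfaces avoiding a given cell, and those exceptional points sit inside large parts: a single point $q\in C$ whose surface crosses $C'$ can by itself make the pair $(C,C')$ non-homogeneous, at a cost of $|C|\,|C'|$ in the $\epsilon$-nice sum (or of one full pair out of $k^2$), while your charging scheme pays only one crossing for it. Since $k$ must be independent of $n$ while the crossing budget $tn/r$ grows with $n$, nothing prevents every one of the $k^2$ pairs from being contaminated in this way, and the bound $\epsilon k^2$ does not follow. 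What the Fox--Pach--Suk argument does at exactly this point, and what is absent from your sketch, is a second refinement: each cell is further split according to the \emph{adjacency profile} of its points with respect to all cells (complete / empty / crossed), so that the contaminating points are segregated into parts of total weight at most $tn^2/r$. The crux is that the number of distinct profiles is only polynomial in $r$ --- not exponential in the number of cells --- because the profile of a point is a sign condition of boundedly many bounded-degree polynomials evaluated at that point, so the Milnor--Thom/Warren bound applies in the parameter space. Without this refinement either the bad-pair count or the part count is uncontrolled.

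A secondary problem lies in your dimension recursion: you assert that the relation restricted to the points on $Z(g)$ is ``again semi-algebraic of complexity bounded in terms of $d,D,t$''. This is false as stated, because $g$ has degree growing with $r$, hence with $1/\epsilon$; the complexity of the restricted structure therefore depends on $\epsilon$, and if the exponent $c$ supplied by the induction hypothesis depends on that complexity, the final bound is no longer $(1/\epsilon)^c$ with $c$ depending only on $d,D,t$. Handling the lower-dimensional strata without this degree blow-up (or avoiding polynomial partitioning altogether in favour of a cutting whose cells cover $\R^d$) is precisely where the published proof has to do real work.
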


The real field $(\mathbb R, +,\cdot,<,0,1)$ is an example of so-called
distal structures, and the above results have then been extended to
classes of graphs included in the age of a graph definable in a distal
structure \cite{chernikov2015regularity}, which we call {\em
  distal-defined} classes.
 
The notion of distal theories was defined in \cite{simon2013distal} to
isolate the class of ``purely unstable'' dependent theories. The original
definition is in terms indiscernible sequences, but a more
combinatorial characterization can be found in
\cite{chernikov2015externally}.  While stability allows a short
combinatorial definition, distality is a lot more complicated and we
refrain from giving a formal definition here. Apart from real closed
fields, an example of distal theories is the theory of dense linear
orders without endpoints, $p$-adic fields with valuation, and
Presburger arithmetic.
We now state the graph version of \cite[Theorem
5.8]{chernikov2015regularity} in our setting.

\begin{definition}\label{def:nice-decomposition}
	Let $G$ be an $n$-vertex graph and let $\epsilon>0$. A partition $V_1, \ldots,V_k$ of $V(G)$ is {\em $\epsilon$-nice} if 
	 \[
    \sum_{\text{non-homogeneous }(V_i,V_j)}\frac{|V_i|\,|V_j|}{n^2}<\epsilon.
    \]
\end{definition}
Note that if a partition $V_1,\dots,V_k$ is an equipartition, then it is $\epsilon$-nice if and only if all pairs but an $\epsilon$-fraction are homogeneous.

\begin{theorem}[Distal regularity lemma; Chernikov and Starchenko \cite{chernikov2015regularity}]
\label{thm:distal_reg}
For every distal-defined class $\mathscr C$ there is a constant $c$
such that for every $\epsilon>0$ and for every $n$-vertex graph
$G\in\mathscr C$, there exists an $\epsilon$-nice partition $V=V_1\cup\dots\cup V_k$ of
$V(G)$ with $k \leq (1/\epsilon)^c$. 
\end{theorem}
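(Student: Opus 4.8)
The plan is to reduce the statement to a single infinite ``universal'' graph and then to exploit the defining combinatorial feature of distal structures, namely the existence of polynomial-size \emph{distal cell decompositions}. Since $\mathscr C$ is distal-defined, fix a distal structure $\strM$ and an infinite graph $\mathbf U$ definable in $\strM$ with $\mathscr C\subseteq\mathrm{Age}(\mathbf U)$; write $d$ for the arity, so that $V(\mathbf U)=M^d$, and let $E(x;y)$ be the symmetric irreflexive formula defining adjacency. Every $G\in\mathscr C$ is then an induced subgraph of $\mathbf U$, so its vertex set is a finite set $V\subseteq M^d$ of size $n$ with edges given by $E$. Because distal theories are dependent, the family $\{E(\,\cdot\,;b):b\in M^d\}$ and all the auxiliary definable families below have finite VC-dimension, so $\epsilon$-nets and $\epsilon$-approximations of size polynomial in $1/\epsilon$ exist for them; this is what will keep every bound independent of $n$.

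The main tool is the distal cell decomposition for $E$: there are an integer $t=t(\mathbf U)$ and a formula $\psi(x;y_1,\dots,y_t)$ such that for every finite parameter set $B\subseteq M^d$ the nonempty sets $\psi(\,\cdot\,;\bar b)$, $\bar b\in B^t$, partition $M^d$ into at most $O(|B|^t)$ cells, each \emph{homogeneous over $B$}: for every cell $C$ and every $b\in B$, either $E(a,b)$ holds for all $a\in C$ or for no $a\in C$. First I would take a random sample $B\subseteq V$ of size $s=(1/\epsilon)^{O(1)}$, large enough that $B$ is simultaneously an $\epsilon$-net for the definable family of ``crossing sets'' $N_{\bar b}=\{v:v\text{ is adjacent to some but not all points of }\psi(\,\cdot\,;\bar b)\}$. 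Forming the cells over $B$ and setting $V_i=C_i\cap V$ yields a partition of $V$ into $k=O(s^t)=(1/\epsilon)^{O(1)}$ parts, as required; this need not be an equipartition, which is harmless since $\epsilon$-niceness is a weighted condition. Homogeneity over $B$ means no sample point lies in $N_{\bar b_i}$ for any cell $C_i$, so by the $\epsilon$-net property each cell is crossed by at most $\epsilon n$ vertices of $V$. The clean point is that a pair $(V_i,V_j)$ is \emph{exactly} homogeneous as soon as no vertex of $V_j$ crosses $C_i$ and no vertex of $V_i$ crosses $C_j$: if some $v\in V_j$ were adjacent to all of $V_i$ and some $v'\in V_j$ to none, then any $u\in V_i$ would cross $C_j$. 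Thus the only non-homogeneous pairs are those in which one cell is crossed by a vertex of the other.

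The remaining and genuinely hard step is to bound the total weight $\sum_{(V_i,V_j)\text{ non-hom}}|V_i|\,|V_j|/n^2$ by $\epsilon$ while keeping $k$ polynomial in $1/\epsilon$ and independent of $n$. The difficulty is that a single crossing vertex sitting in a heavy cell can spoil a heavy pair, so the crude count ``at most $\epsilon n$ crossing vertices per cell'' is far too lossy: translated naively it only bounds the number of spoiled pairs by $\epsilon n$ per cell, which is useless once $n\gg k$, while the cutting-lemma alternative yields bounds polynomial in $n$ rather than in $1/\epsilon$. Obtaining $n$-independence is exactly where distality is used beyond mere dependence: one follows the polynomial method of Fox, Pach and Suk, upgrading the sample to an $\epsilon$-approximation and applying the cell decomposition a second time to the (definable, hence bounded-VC) relation recording which cell-pairs are mutually crossing, so as to show that the weighted fraction of crossed cell-pairs enjoys a \emph{polynomial saving} over the trivial bound. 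Choosing the sample size as a fixed power of $1/\epsilon$ then drives this saving below $\epsilon$. I expect this weighted bad-pair estimate to be the crux; once it is in place, taking a sample $B$ that is good for all finitely many auxiliary set systems at once (possible with positive probability, since each failure event is rare) produces the desired $\epsilon$-nice partition with $k\le(1/\epsilon)^c$.
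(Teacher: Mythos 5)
This statement is quoted by the paper from Chernikov and Starchenko \cite{chernikov2015regularity}; the paper itself gives no proof, so your attempt can only be judged on its own merits, and it has a genuine gap. Your first two paragraphs are sound: the reduction to a single definable graph $\mathbf U$, the use of a distal cell decomposition $\psi(x;y_1,\dots,y_t)$ over a sample $B$, the $\epsilon$-net argument showing that each cell is crossed by at most $\epsilon n$ vertices, and the observation that a pair $(V_i,V_j)$ is homogeneous whenever neither cell is crossed by a vertex of the other, are all correct. But the theorem's entire content lies in the step you defer to the third paragraph, and there you do not give an argument: you correctly diagnose that ``each cell is crossed by at most $\epsilon n$ vertices'' is useless for bounding the \emph{weighted} sum $\sum_{\text{non-hom.}}|V_i|\,|V_j|/n^2$ (a handful of crossing vertices sitting inside a heavy cell can spoil pairs of total weight $\Omega(1)$, no matter how the sample size is tuned), and then you simply assert that ``the polynomial method of Fox, Pach and Suk'' with ``an $\epsilon$-approximation'' and ``a second application of the cell decomposition to cell-pairs'' will fix this. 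That is not a proof, and it is not clear it can be made into one: the obstruction you identified is structural, not a matter of sharpening the sampling, because no single application (or constant number of applications) of a cutting controls \emph{which parts} the crossing vertices fall into.

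The actual proof of Chernikov--Starchenko does something different at exactly this point. From the distal cell decomposition one first derives the \emph{definable strong Erd\H{o}s--Hajnal property}: there is $\delta>0$ such that any two finite sets $A,B$ contain subsets $A'\subseteq A$, $B'\subseteq B$ with $|A'|\geq\delta|A|$, $|B'|\geq\delta|B|$ and $(A',B')$ homogeneous (this is the easy consequence of a cutting: some cell captures a $\delta$-fraction of $A$, and at least half of $B$ does not cross it). The regularity lemma is then obtained by the iterated refinement scheme of Alon--Pach--Pinchasi--Radoi\v{c}i\'c--Sharir and Fox--Pach--Suk: one builds a partition tree of depth $O(\log(1/\epsilon))$ in which each round uses strong Erd\H{o}s--Hajnal to split parts so that the weight of non-homogeneous pairs decays by a constant factor $(1-c(\delta))$ per round while the number of parts grows only by a constant factor per round; polynomially many parts in $1/\epsilon$ come out of balancing these two geometric rates. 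This iteration, with its potential-function analysis, is the missing heart of your argument; without it (or a genuinely worked-out substitute), your proposal establishes only the setup, not the theorem.
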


The distal regularity lemma stated above is similar in its form to the 
Frieze-Kannan (weak) regularity lemma \cite{frieze1996regularity}.
However, it is easy to deduce a version with an equipartition from \Cref{thm:distal_reg}, by applying the next lemma.
\begin{lemma}
\label{lem:equipartition}
	Assume $G$ has an $\epsilon$-nice partition into $k$ classes.
	Then $V(G)$ has an equipartition into $k/\epsilon$ classes such that all but (at most) a $3\epsilon$-fraction of the pairs are homogeneous.
\end{lemma}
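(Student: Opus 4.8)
The plan is to transform the given $\epsilon$-nice partition $V_1,\dots,V_k$ into an equipartition by a single ``concatenate and slice'' operation, and to bound the irregular pairs through the \emph{weighted} (density) count rather than by counting pairs directly. The point of working with the weighted count is that it lines up exactly with the definition of $\epsilon$-niceness; and since the output will be an honest equipartition, the weighted bound converts back into a bound on the fraction of non-homogeneous pairs by the observation recorded just after \Cref{def:nice-decomposition}.

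Concretely, write $n=|V(G)|$ and list the vertices of $G$ in a linear order that first enumerates all of $V_1$, then all of $V_2$, and so on. Set $T=\lceil k/\epsilon\rceil$ and cut this order into $T$ consecutive intervals $W_1,\dots,W_T$ whose sizes are all $\lfloor n/T\rfloor$ or $\lceil n/T\rceil$; this is automatically an equipartition into $T\le\lceil k/\epsilon\rceil$ classes. Call a part $W_a$ \emph{pure} if its interval lies inside the block of a single $V_i$, and \emph{impure} otherwise. The colour changes only at the $k-1$ interfaces between consecutive $V_i$'s, and each interface lies in exactly one part, so there are at most $k-1$ impure parts, each of size at most $\lceil n/T\rceil\le \epsilon n/k+1$. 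Hence the impure parts carry in total at most $(k-1)(\epsilon n/k+1)<\epsilon n+k$ vertices.

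The key structural fact is that refinement preserves homogeneity: if $(V_i,V_j)$ is homogeneous then $G[A,B]$ is complete bipartite or edgeless for all $A\subseteq V_i$ and $B\subseteq V_j$, so every pair of pure parts $(W_a,W_b)$ with $W_a\subseteq V_i$ and $W_b\subseteq V_j$ is again homogeneous. Thus a pure--pure pair can be non-homogeneous only when it sits over a non-homogeneous pair $(V_i,V_j)$, and as the pure parts inside $V_i$ are disjoint subsets of $V_i$ we obtain
\[
\sum_{\substack{(W_a,W_b)\text{ non-hom.}\\ \text{both pure}}}|W_a|\,|W_b|
\;\le\;\sum_{(V_i,V_j)\text{ non-hom.}}|V_i|\,|V_j|\;<\;\epsilon\,n^2,
\]
the last inequality being exactly the $\epsilon$-niceness of $V_1,\dots,V_k$. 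For the pairs meeting an impure part I bound crudely by the total weight of impure parts,
\[
\sum_{\substack{(W_a,W_b)\text{ non-hom.}\\ \text{some part impure}}}|W_a|\,|W_b|
\;\le\;2\Bigl(\sum_{W_a\text{ impure}}|W_a|\Bigr)\Bigl(\sum_{b}|W_b|\Bigr)
\;\le\;2(\epsilon n+k)\,n,
\]
so that the two estimates add up to $3\epsilon n^2 + 2kn$. Therefore the new partition is $(3\epsilon+2k/n)$-nice, and being an equipartition it has, by the observation after \Cref{def:nice-decomposition}, all but at most a $(3\epsilon+2k/n)$-fraction of its pairs homogeneous.

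The main obstacle is the tension between producing a genuine equipartition and keeping the boundary parts under control: redistributing the leftover vertices one at a time would contaminate essentially every part and destroy the estimate, which is why I keep the vertices grouped by slicing a single concatenated order, confining the damage to at most $k-1$ parts. The only residual looseness is the additive $2kn$ coming from the rounding of $n/T$; it contributes the lower-order term $2k/n$, which is absorbed into the $3\epsilon$ bound for $n$ large enough (as is implicit in the intended range of application), yielding the stated conclusion.
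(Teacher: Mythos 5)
Your proof is correct and follows essentially the same route as the paper's: refine the $\epsilon$-nice partition into $\lceil k/\epsilon\rceil$ parts of size roughly $\epsilon n/k$, use that refinement preserves homogeneity so the non-homogeneous pure--pure pairs still carry weight less than $\epsilon$, and crudely charge everything touching the at most $k-1$ ``boundary'' parts (the paper does the same accounting, except it pools the leftover piece $W_{i,0}$ of each $V_i$ into at most $k'\leq k$ extra parts $Z_1,\dots,Z_{k'}$ instead of slicing a concatenated order), giving $\epsilon+2\epsilon=3\epsilon$. Your residual $2k/n$ term is the same rounding slack that the paper hides by assuming ``for the sake of simplicity'' that $\lceil k/\epsilon\rceil$ divides $n$, so it is not a substantive gap.
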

\begin{proof}
	Let $V_1\cup\dots\cup V_k$ be an $\epsilon$-nice partition of $G$ into $k$ parts and let $\Sigma\subseteq[k]\times [k]$ be the set of all pairs $(i,j)$ such that $(V_i,V_j)$ is not homogeneous.
		Let $K=\lceil k/\epsilon\rceil$. For the sake of simplicity we assume that $K$ divides $n$.
	We split each part~$V_i$ into $W_{i,0},W_{i,1},\dots,W_{i,a_i}$ with $|W_{i,0}|\leq |W_{i,1}|=\dots=|W_{i,a_i}|=n/K$.

	Let $\mathcal I=\{(i,s)\mid 1\leq i\leq k\text{ and }1\leq s\leq a_i\}$. 
		Note that each pair $(W_{i,s},W_{j,t})$ with $(i,j)\notin\Sigma$ is homogeneous. 
		Let $\Sigma'$ be the set of the pairs $(i,s),(j,t)\in\mathcal I$ such that $(i,j)\in \Sigma$.
		Let $Z=\bigcup_i W_{i,0}$. Note that $|Z|\leq kn/K$, say, 
		$|Z|=k'n/K$ for some $k'\leq k$. 
	We now consider an equipartition of $Z$ into sets $Z_1,\dots,Z_{k'}$ of size~$n/K$.
		As $(V_1,\dots,V_k)$ is $\epsilon$-nice we have $\sum_{(i,j)\in\Sigma}|V_i|\,|V_j|< \epsilon n^2$.
	It follows that 
	\[
	\sum_{((i,s),(j,t))\in\Sigma'}|W_{i,s}|\,|W_{j,t}|<\epsilon n^2.
	\]
	As $|W_{i,s}|=n/K$ we get $|\Sigma'|<\epsilon K^2$.
	It follows that the global number of non-homogeneous pairs is 
	bounded by $|\Sigma'|+Kk'+k'^2\leq 3k^2/\epsilon$.
	Hence the proportion of non-homogeneous pairs is at most
	$(3k^2/\epsilon)/(k^2/\epsilon^2)=3\epsilon$. 
\end{proof}

%

It is also proved in \cite{chernikov2015regularity} that  distal-defined classes of
graphs have the strong Erd\H os-Hajnal property.

\bigskip

\part{Regularity for gentle graphs}
\section{Set-defined classes: both stable and semi-algebraic regularity}
\label{sec:setdef}
We call a class $\Cc$ {\em set-defined} if it is included in the age
of a graph definable in~$\mathbb N$, considered as a model of an
infinite set.  Note that every set-defined class is obviously semi-algebraic.
However,
as $\mathbb N$ is stable,  every set-defined class not
only has bounded VC-dimension, but also has bounded order-dimension.
It follows that set-defined classes enjoy both stable regularity (\Cref{thm:stable_reg}) and semi-algebraic regularity (\Cref{thm:sa}).
For this reason, it seems that it is worth studying these classes. Moreover, set-defined classes possess properties that make them a dense analog of degenerate classes (\Cref{thm:conj1}). 

\begin{example}
  The class of cographs is not set-defined. Indeed, the
  order-dimension of cographs is unbounded.
\end{example}

\begin{example}
	The {\em shift-graph} $S(n,k)$ has vertex set 
	$V=\{\bar x\in [n]^k\mid x_1<x_2<\dots<x_k\}$ and edge set 
	$E=\{\{\bar x,\bar y\}\mid \bigwedge_{i=1}^{k-1}(x_i=y_{i+1})\,\vee\,\bigwedge_{i=1}^{k-1}(y_i=x_{i+1})\}$. It follows that for fixed $k$ the class $\{S(n,k)\mid n\in\mathbb N\}$ is set-defined.
	Note that, however, this class is not $\chi$-bounded, as shift-graphs are triangle-free and $\chi(S(n,k))=(1+o(1))\underbrace{\log\dots\log}_{k-1\text{ times}}n$ \cite{erdos1968chromatic}.
\end{example}

\begin{lemma}\label{lem:sd-setdef}
Every graph class with bounded shrubdepth is set-defined. 
\end{lemma}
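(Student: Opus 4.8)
The plan is to exhibit a single graph $\mathbf U$ that is definable in $\N$ (viewed as a pure infinite set, so that its edge relation is a Boolean combination of equalities between coordinates) and into which every $G\in\Cc$ embeds as an induced subgraph; this is exactly what it means for $\Cc$ to be set-defined.

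First I would invoke the characterization of bounded shrubdepth recalled above: since $\Cc$ has bounded shrubdepth, there is a fixed $t$ with $\Cc\subseteq\mathcal{SC}_t$. Fix $G\in\mathcal{SC}_t$ and unfold the inductive definition into a rooted \emph{construction tree} of height $t$: the root (level $t$) is $G$, the level-$(\ell-1)$ children of a level-$\ell$ node are the summands $G_1,\dots,G_k\in\mathcal{SC}_{\ell-1}$ of its disjoint union, the leaves (level $1$) are the single vertices from $\mathcal{SC}_1=\{K_1\}$, and each internal node carries the set $A$ along which adjacencies of its disjoint union are complemented. I would then record, for each vertex $u$, two kinds of bounded data: for $\ell=1,\dots,t$ an identifier $a_\ell(u)\in\N$ of the level-$\ell$ ancestor of $u$ (choosing the finitely many node identifiers of this finite tree to be pairwise distinct), and for $\ell=2,\dots,t$ a bit $\beta_\ell(u)\in\{0,1\}$ recording whether $u$ lies in the set $A$ of its level-$\ell$ ancestor. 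This gives an injective map $\iota\colon V(G)\to\N^{D}$ for a dimension $D=D(t)$ not depending on $G$, each bit being stored in a constant-free way through the equality or inequality of a dedicated pair of coordinates.

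The key computation is to read off adjacency from this encoding. Writing $\ell^\ast(u,v)$ for the level of the least common ancestor of $u$ and $v$ — equivalently, since ``sharing the level-$\ell$ ancestor'' is monotone in $\ell$, the least $\ell$ with $a_\ell(u)=a_\ell(v)$ — one checks from the recursion that the only complementations affecting the pair $\{u,v\}$ are those performed at the least common ancestor and at its ancestors, and that the disjoint union at the least common ancestor contributes no edge between $u$ and $v$. This yields the clean, graph-independent rule
\[
\{u,v\}\in E(G)\iff \bigoplus_{\ell=\ell^\ast(u,v)}^{t}\bigl(\beta_\ell(u)\wedge\beta_\ell(v)\bigr)=1,
\]
where $\oplus$ denotes parity. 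Because $\ell^\ast(u,v)=\ell$ is expressed by ``$a_\ell(u)=a_\ell(v)$ and $a_{\ell-1}(u)\neq a_{\ell-1}(v)$'', each $\beta_\ell$ is read as an equality of coordinates, and the outer disjunction over $\ell\in\{2,\dots,t\}$ is finite, the whole condition is a fixed Boolean combination of equalities between the coordinates of $\iota(u)$ and $\iota(v)$.

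Finally I would let $\mathbf U$ be the graph with vertex set $\N^{D}$ whose symmetric, irreflexive edge relation is this formula $\eta(\bar x,\bar y)$. Being a Boolean combination of equalities, $\eta$ is definable in $\N$ as an infinite set, so $\mathbf U$ is definable in $\N$; and by construction $\iota$ identifies each $G\in\Cc$ with an induced subgraph of $\mathbf U$, so $\Cc\subseteq{\rm Age}(\mathbf U)$ and $\Cc$ is set-defined. The main obstacle is the adjacency computation underlying the displayed parity formula — verifying precisely which of the nested complementations act on a given pair, and that no spurious edge survives from levels below the least common ancestor; the secondary point to get right is that, since the pure set $\N$ offers no constants, all finite data (labels and bits) must be transported into equality patterns among coordinates rather than named values.
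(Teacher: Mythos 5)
Your proof is correct and follows essentially the same route as the paper: both unfold the bounded-SC-depth decomposition into a rooted construction tree and express adjacency as the parity of the complementations acting at or above the least common ancestor, realized as a fixed Boolean combination of coordinate equalities over $\N^{D}$. The only difference is one of compression: the paper merges your ancestor identifier $a_\ell(u)$ and membership bit $\beta_\ell(u)$ into a single coordinate per level (equal to the ancestor's label when $u$ lies in the complemented set, and to $u$'s own label otherwise), so that $x_\ell(u)=x_\ell(v)$ already means ``both in the set and same level-$\ell$ ancestor,'' which eliminates your explicit case analysis on $\ell^\ast(u,v)$ and yields the clean formula $\neg(x_{d+1}=y_{d+1})\wedge\bigoplus_{i=1}^{d}(x_i=y_i)$ in dimension $d+1$.
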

\begin{proof}
A class $\Cc$ of graphs has bounded shrubdepth if and only if 
it has bounded SC-depth. The notion of SC-depth leads to a natural
notion of \emph{SC-decompositions}. An SC-decomposition of a graph $G$
of SC-depth at most $d$ is a rooted tree $T$ of depth $d$ with leaf
set $V(G)$, equipped with unary predicates $A_1,\ldots, A_d$ on 
the leaves. Each child $s$ of the root in $T$ corresponds to one of the 
subgraphs $G_1,\ldots, G_k$ of SC-depth $d-1$, such that $G$ is obtained from the 
disjoint union of the $G_i$ by complementing the adjacency of 
the pairs of vertices in $A_1\times A_1$. We continue recursively 
with the subgraphs $G_i$ using the predicate $A_j$ at level $j$
of the tree $T$. 

We now show that for fixed $d$ the class of graphs of SC-depth $d$
is set-defined by a formula $\phi(\bar x,\bar y)$ with $|\bar x|=d+1$. 
Let \[\phi(\bar x,\bar y)\coloneqq \neg(x_{d+1}=y_{d+1})\wedge
\bigoplus_{i=1}^{d}(x_i=y_i).\]

Let $G\in \Cc$ and fix an SC-decomposition $T$ of $G$, together with some injection $l\colon V(T)\rightarrow \N$. For $v\in V(G)$ we call $l(v)$ 
the label of $v$. Now we map each vertex $v\in V(G)$
to the tuple $(x_1,\ldots, x_d, x_{d+1})$, where $x_{d+1}$ is the label
of $v$ and for $1\leq i\leq d$, $x_i$ is either the label of the ancestor
of $v$ at depth $i$ in $T$, if the vertex belongs to the complemented set
$A_i$ at level $i$, or the label of $v$ otherwise. It is easy to verify
this mapping induces an isomorphism of $G$ and its image in the infinite
graph $\mathbf{U}$ defined by $\phi$ on $\N^{d+1}$. 
\end{proof}
We will use the lemma to prove in \Cref{crl:rwcovers-setdefined} that
a class has structurally bounded expansion if and only if it has low
linear rankwidth covers and is set-defined.

Our motivation to introduce 
set-defined classes is that they have bounded order-dimension and are
semi-algebraic. This naturally leads to the following problem.

\begin{problem}
  Is there a variant of the regularity lemma for set-defined classes,
  which would imply (for set-defined classes) both the semi-algebraic version (\Cref{thm:sa}) and the
  stable version (\Cref{thm:stable_reg})?
\end{problem}


We now show that set-defined classes can be seen as a dense analog of degenerate classes.

\begin{lemma}
\label{lem:deg}
Every degenerate class is set-defined.
\end{lemma}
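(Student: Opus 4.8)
The plan is to exploit the standard \emph{degeneracy ordering} of a $d$-degenerate graph to encode each vertex by the bounded list of labels of its out-neighbours, and then to observe that membership of a label in such a list is expressible using equality alone---which is precisely what $\N$, viewed as a pure infinite set, can define.

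First I would recall that a graph is $d$-degenerate if and only if it admits an acyclic orientation in which every vertex has out-degree at most $d$: process the vertices in the order in which they are successively removed as minimum-degree vertices, and orient each edge towards its later endpoint. Fix such an orientation for $G\in\Cc$ and choose an injective labelling $\ell\colon V(G)\to\N$. To each vertex $v$, whose out-neighbours are $u_1,\dots,u_k$ with $k\le d$, I associate the tuple
\[
\Psi(v)=\bigl(\ell(u_1),\dots,\ell(u_k),\underbrace{\ell(v),\dots,\ell(v)}_{d-k},\ell(v)\bigr)\in\N^{d+1},
\]
so that the first $d$ coordinates list the at most $d$ out-neighbour labels (padded with $\ell(v)$) and the last coordinate records $\ell(v)$ itself.

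Next I would exhibit the defining formula. Using equality only, set
\[
\phi(\bar x,\bar y)\coloneqq \neg(x_{d+1}=y_{d+1})\wedge\Bigl(\textstyle\bigvee_{i=1}^{d}(x_i=y_{d+1})\ \vee\ \bigvee_{i=1}^{d}(y_i=x_{d+1})\Bigr),
\]
and let $\mathbf U$ be the graph on $\N^{d+1}$ defined by $\phi$. This formula is symmetric and irreflexive and is manifestly definable in $\N$ as a model of an infinite set. It remains to verify that $\Psi$ is an isomorphism of $G$ onto the induced subgraph $\mathbf U[\Psi(V(G))]$. Since $\ell$ is injective, distinct vertices get distinct last coordinates, so the conjunct $\neg(x_{d+1}=y_{d+1})$ both enforces irreflexivity and, crucially, neutralises the padding: a padded entry equals $\ell(v)$, hence can satisfy $x_i=y_{d+1}=\ell(u)$ only when $\ell(v)=\ell(u)$, i.e.\ $u=v$, which is already excluded. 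Thus for $u\neq v$ the formula $\phi(\Psi(v),\Psi(u))$ holds exactly when $\ell(u)$ appears among the genuine out-neighbour labels of $v$ or $\ell(v)$ appears among those of $u$---that is, exactly when the (uniquely oriented) edge $uv$ belongs to $E(G)$.

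Carrying this out uniformly over the class yields $\Cc\subseteq{\rm Age}(\mathbf U)$ via a single formula $\phi$ depending only on $d$, which is the definition of set-defined. I expect no serious obstacle here: the only point requiring care is the padding convention and the check that it creates no spurious edges, which the irreflexivity conjunct disposes of cleanly. The conceptual content is simply that bounded degeneracy provides a bounded-out-degree acyclic orientation, and ``lying in a bounded list'' is an equality-definable property, so it survives in the very weak structure $\N$.
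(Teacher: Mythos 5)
Your proof is correct and takes essentially the same route as the paper: a degeneracy ordering, the encoding of each vertex by the padded tuple of its out-neighbour labels together with its own label, and an equality-only adjacency formula on $\N^{d+1}$ whose correctness on the image follows from injectivity of the labelling. The only cosmetic difference is that you make the irreflexivity conjunct $\neg(x_{d+1}=y_{d+1})$ explicit, whereas the paper's formula lets the disjunction run over all $d+1$ coordinates and relies on its standing convention that interpretation formulas are implicitly symmetrised and made irreflexive.
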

\begin{proof}
  Let $\mathscr C$ be a $d$-degenerate class of graphs.  We consider
  the following formula~$\phi(\bar x,\bar y)$, where $\bar x$ and
  $\bar y$ are $d+1$-tuples.
  \[
  \phi(\bar x,\bar y):=\bigvee_{i=1}^{d+1}(x_{d+1}=y_i)\vee
  (y_{d+1}=x_i).
  \]
  This defines the adjacency of an infinite graph $\mathbf U$ with
  vertex set $\mathbb N^{d+1}$.

  For every graph $G\in\mathscr C$ there is a numbering
  $\ell\colon V(G)\rightarrow \{1,\dots,|G|\}$ such that every vertex
  $v$ has at most $d$ neighbors $u$ with $\ell(u)<\ell(v)$. We define
  $f\colon V(G)\rightarrow\mathbb N^{d+1}$ as follows: for every
  vertex $v\in V(G)$ such that $u_1,\dots,u_k$ are the neighbors of
  $v$ with $\ell(u_i)<\ell(v)$ we let
  \[
  f(v)=(\ell(u_1),\dots,\ell(u_k),\ell(v),\dots,\ell(v)).
  \]
  Then it is easily checked that $f$ induces an isomorphism of $G$ and
  its image in~$\mathbf U$.
\end{proof}

\begin{lemma}
\label{lem:conj2}	
	Let $s,k$ be positive integers. 
	Let $E(G)=A\times B\setminus\bigcup_{i=1}^k E_i$, where each $E_i$ is a vertex-disjoint union of complete bipartite graphs. Then 
	there exists $D=D(s,k)$ such that either  $K_{s,s}$ is a subgraph of $G$ or $\delta(G)\leq D$.
\end{lemma}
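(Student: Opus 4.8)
\emph{Reformulation via colorings.} The plan is to first convert the hypothesis into a statement about colorings. Since each $E_i$ is a vertex-disjoint union of complete bipartite graphs, there is a coloring $c_i\colon A\cup B\to\Lambda_i$ (assign each block its own color, and give every block-free vertex a private color) such that $\{a,b\}\in E_i$ iff $c_i(a)=c_i(b)$. Writing $a\sim b$ for adjacency in $G$, this yields the clean description: $a\sim b$ iff $c_i(a)\neq c_i(b)$ for all $i\in\{1,\dots,k\}$, i.e. two vertices are adjacent exactly when their color vectors differ in every coordinate. Consequently $A'\subseteq A$, $B'\subseteq B$ induce a $K_{s,s}$ iff $|A'|=|B'|=s$ and for each $i$ the color sets $c_i(A')$ and $c_i(B')$ are disjoint.

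\emph{A one-sided inductive claim.} I would prove, by induction on $k$, the sharper statement: for every $s$ there are constants $D(s,k),R(s,k)$ such that if $G$ is $K_{s,s}$-free and every vertex of $A$ has degree greater than $D(s,k)$, then $|A|<R(s,k)$. This \emph{one-sided} form (large degrees only on the $A$-side) is what makes the recursion close. The lemma follows at once: if $\delta(G)>\max(D(s,k),R(s,k))$ and $G$ were $K_{s,s}$-free, the claim forces $|A|<R(s,k)$, contradicting that every vertex of $B$ has degree at most $|A|$ yet greater than $R(s,k)$. The base case $k=0$ is the complete bipartite graph, where the claim is immediate with $D(s,0)=s-1$, $R(s,0)=s$.

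\emph{Inductive step, first stage.} Assuming the claim for $k-1$ and arguing by contradiction (so $|A|\geq R(s,k)$), the first move bounds each color class of the last coordinate. For a color $\gamma$, let $A_\gamma,B_\gamma$ be the classes $c_k=\gamma$; the subgraph $G[A_\gamma,\,B\setminus B_\gamma]$ is ``$(k-1)$-structured'' because on it coordinate $k$ never creates a conflict, and every vertex of $A_\gamma$ retains its full $G$-degree there (all its neighbors avoid $\gamma$). The induction hypothesis then gives $|A_\gamma|<R(s,k-1)$. Hence $A$ uses many distinct $c_k$-colors, and I can extract $A_0\subseteq A$ whose vertices have pairwise distinct $c_k$-colors with $|A_0|$ still large.

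\emph{The crux.} The main obstacle is precisely the last coordinate: I have no control over which $c_k$-colors appear in $B$, so I cannot directly enforce color-disjointness in coordinate $k$. The plan is to \emph{drop} coordinate $k$ and work in the $(k-1)$-structured supergraph $G'$ on $(A_0,B)$ defined by the first $k-1$ coordinates. Since $A_0$ is large and its $G'$-degrees dominate its $G$-degrees, applying the one-sided hypothesis to $G'$ with the enlarged parameter $m=s^2+2s$ produces a $G'$-biclique $K_{m,m}$ whose $A$-side $A^*$ lies in $A_0$. To repair coordinate $k$, pick any $s+1$ vertices of $A^*$; as they have distinct $c_k$-colors, each of the $m$ vertices of the $B$-side conflicts (in coordinate $k$) with at most one of them. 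Partitioning the $B$-side according to which chosen vertex it conflicts with (if any) gives at most $s+2$ classes, so some class has at least $m/(s+2)=s$ vertices, all conflicting with a single common vertex $a^*$ (or with none). Discarding $a^*$ leaves $s$ vertices of $A^*$ which, together with these $s$ vertices of the $B$-side, differ in all $k$ coordinates and hence induce a $K_{s,s}$ in $G$ --- the desired contradiction. Tracking constants gives $D(s,k)=\max(D(s,k-1),D(s^2+2s,k-1))$ and $R(s,k)=R(s,k-1)\cdot R(s^2+2s,k-1)+1$; the only delicate points are this repair step and the discipline of keeping the hypothesis one-sided throughout the recursion.
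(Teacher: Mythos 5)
Your proof is correct, and it takes a genuinely different route from the paper's. Your key steps all check out: recasting each $E_i$ as a coloring makes adjacency ``differ in every coordinate''; the one-sided induction closes because (a) a vertex of $A_\gamma$ keeps its full degree in $G[A_\gamma,\,B\setminus B_\gamma]$ (none of its neighbours shares colour $\gamma$ in coordinate $k$), so the $(k-1)$-hypothesis bounds every colour class by $R(s,k-1)$; (b) dropping coordinate $k$ only adds edges, so the hypothesis at parameter $m=s^2+2s=s(s+2)$ applies to $G'$ on $(A_0,B)$ and, by contraposition, yields a $G'$-biclique $K_{m,m}$; and (c) the pigeonhole repair works since each $B$-side vertex clashes in coordinate $k$ with at most one of $s+1$ chosen $A$-side vertices (they carry distinct $c_k$-colours), so some $s$ of the $m$ vertices share a single culprit, and discarding it produces a genuine $K_{s,s}$ in $G$, as required for the contradiction; the recursions for $D(s,k)$ and $R(s,k)$ are consistent with these steps. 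The paper argues quite differently: it applies K\"onig's theorem (a vertex cover of size at most $D$ forces a vertex of degree at most $D$; otherwise there is a matching of size $D$), then Ramsey's theorem with four colours to extract from that matching either a $K_{s,s}$ or a large induced matching, and finally invokes Lov\'asz's product-dimension theorem to exclude the induced matching, since the bipartite complement of $G$ is covered by $k$ bipartite equivalences. What each buys: your argument is elementary and fully self-contained (no K\"onig, no Ramsey, no product dimension), proves a slightly stronger one-sided statement (large degrees on the $A$-side alone bound $|A|$), and gives explicit bounds of comparable size (roughly $(s+1)^{2^k}$ versus the paper's $2^{s2^k}$); the paper's proof is considerably shorter, at the price of leaning on the cited Lov\'asz result.
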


\begin{proof}
We let $D=D(s,k)\coloneqq 2^{s2^{k}}$ and let
$G$ be as in the statement. By K\"onig's theorem, either $G$ 
contains a vertex cover $X$ with at most $D$ vertices or a matching~$M$ with at least $D$ edges. In the first case, $G$ contains a vertex
of degree at most~$D$ (consider a vertex not in $X$,
all its neighbors must be in $X$). 

In the second case, let $M=\bigl\{\{a_i,b_i\};\ i=1,\dots,D\bigr\}$.
Using Ramsey's theorem for pairs and $4$ colors we get that $M$ contains a matching $\bar M$, $|\bar M|\geq 2s$ such that for any two edges $\{a_i,b_i\}$ and $\{a_j,b_j\}$ in $\bar M$ the graph induced on the set $\{a_i,b_i,a_j,b_j\}$ is the same graph $H$. There are $4$ possibilities for $H$: either $|E(H)|=4$, or $|E(H)|=3$ (two possibilities for this case: either $\{a_i,b_j\}\in E(G)$ or $\{b_i,a_j\}\in E(G)$) or $|E(H)=2|$ (in which case $\bar M$ is an induced matching). In the first three cases, $K_{s,s}$ is a subgraph of $G$. The last case is impossible as by \cite{lovasz1980product} the complement of any graph containing an induced matching of size $D$ cannot covered by $\log D$ bipartite equivalences (i.e.\ disjoint unions of complete bipartite graphs). (In \cite[Proposition 5.3]{lovasz1980product} this is phrased in the language of the product dimension of the graph.)
	Hence it suffices to put $D=2^{s2^k}$ (we do not optimize here).
\end{proof}

\begin{theorem}
\label{thm:conj1}
	A class $\mathscr C$ is degenerate if and only if it is both weakly sparse and set-defined.
\end{theorem}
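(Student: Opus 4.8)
The plan is to prove the two directions separately, with \Cref{lem:deg} and \Cref{lem:conj2} as the two main inputs. The forward implication is short: if $\mathscr C$ is $d$-degenerate for some $d$, then it is set-defined by \Cref{lem:deg}, and it is weakly sparse because, were $K_{s,s}$ a subgraph of some $G\in\mathscr C$, the induced subgraph of $G$ on those $2s$ vertices would have minimum degree at least $s$, forcing $s\le d$; thus $\mathscr C$ excludes $K_{d+1,d+1}$ as a subgraph.

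The substance is the converse. Assume $\mathscr C$ is weakly sparse—no $G\in\mathscr C$ contains $K_{s,s}$ as a subgraph—and set-defined, say $\mathscr C\subseteq\mathrm{Age}(\mathbf U)$ where $\mathbf U$ has vertex set $\N^d$ and edge relation $\eta(\bar x,\bar y)$ definable in $\N$ as a model of an infinite set. Since $\mathrm{Age}(\mathbf U)$ is hereditary and every subgraph of a $K_{s,s}$-free graph is again $K_{s,s}$-free, every induced subgraph of any $G\in\mathscr C$ is itself a $K_{s,s}$-free finite induced subgraph $\mathbf U[W]$ of $\mathbf U$; hence it suffices to find a constant $D=D(s,d)$ such that every such $\mathbf U[W]$ has a vertex of degree at most $D$, which is exactly $D$-degeneracy of $\mathscr C$. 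I would obtain this from an edge-count: if every semi-induced bipartite subgraph $\mathbf U[X,Y]$ (with $X,Y$ disjoint) has at most $c(|X|+|Y|)$ edges, then splitting an arbitrary $U\subseteq\N^d$ into two parts and averaging shows $\mathbf U[U]$ has at most $2c|U|$ edges, so every induced subgraph has minimum degree at most $2c$ and $\mathbf U[W]$ is $2c$-degenerate. It thus remains to bound the number of edges of a $K_{s,s}$-free bipartite graph $\mathbf U[X,Y]$.

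Here I would invoke quantifier elimination for the theory of an infinite set: $\eta$ is equivalent to a boolean combination of equalities among the coordinates of $\bar x$ and $\bar y$. The equalities internal to $\bar x$ (resp.\ $\bar y$) split $X$ (resp.\ $Y$) into boundedly many classes with constant internal pattern, and on each pair of classes the adjacency becomes a boolean combination of the cross-atoms $R_{ij}=\{(\bar a,\bar b):a_i=b_j\}$. Grouping the two sides by the value of the relevant coordinate, each $R_{ij}$ is a vertex-disjoint union of complete bipartite graphs, i.e.\ a bipartite equivalence. Passing to disjunctive normal form, each disjunct is an intersection of some $R_{ij}$ with complements of others; its positive part is again a vertex-disjoint union of complete bipartite graphs (the common refinement of the positive atoms), and on each piece $A_\alpha\times B_\alpha$ of this refinement the disjunct has exactly the shape $A_\alpha\times B_\alpha\setminus\bigcup_i E_i$ with at most $d^2$ bipartite equivalences $E_i$—precisely the hypothesis of \Cref{lem:conj2}.

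Since $\mathbf U[X,Y]$ is $K_{s,s}$-free, so is each piece and every sub-bipartite-graph of it, so \Cref{lem:conj2} forces minimum degree at most $D(s,d^2)$ throughout; each piece is therefore $D(s,d^2)$-degenerate and carries $O(|A_\alpha|+|B_\alpha|)$ edges. Summing over the vertex-disjoint pieces of a disjunct, then over the boundedly many disjuncts and internal-type pairs, bounds the edges of $\mathbf U[X,Y]$ by $c(s,d)(|X|+|Y|)$, which yields the degeneracy bound. I expect the main obstacle to be the normal-form bookkeeping of the third paragraph—verifying that an arbitrary boolean combination of the $R_{ij}$ really decomposes, on each piece of its positive part, into the precise form $A\times B\setminus\bigcup_i E_i$ demanded by \Cref{lem:conj2}—together with the aggregation of the per-piece bounds into a single linear edge-count; the transfer of $K_{s,s}$-freeness to every piece and every sub-bipartite-graph is what makes that aggregation legitimate.
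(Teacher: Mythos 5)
Your proposal is correct and follows essentially the same route as the paper: both directions rest on \Cref{lem:deg} and \Cref{lem:conj2}, with the converse reducing to semi-induced bipartite subgraphs, writing the equality-defined edge relation in disjunctive normal form so that the positive atoms split each disjunct into vertex-disjoint pieces of the form $A_\alpha\times B_\alpha\setminus\bigcup_i E_i$, and invoking \Cref{lem:conj2} on each piece. The only differences are organizational — you run a direct linear edge count (via the random-bipartition reduction) where the paper argues by contradiction, isolates a single non-degenerate disjunct, and uses a coordinate-duplication trick to turn the atoms $x_i=y_j$ into diagonal ones — plus one harmless constant slip: an edge bound of $2c|U|$ gives minimum degree at most $4c$, not $2c$.
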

\begin{proof}
	If a class $\mathscr C$ is degenerate, then it is weakly sparse, and 
	it is set-defined by \Cref{lem:deg}.

	Conversely, let $\mathscr C$ be a weakly sparse set-defined class.
%
%
	  %
  As $\mathscr C$ is weakly sparse, there exists an
  integer $s$ such that $K_{s,s}\not\subseteq G$ for all $G\in\Cc$.
  As $\mathscr C$ is set-defined, there is an integer
  $k$ and a formula $\phi(\bar x,\bar y)$ with $|\bar x|=|\bar y|=k$,
  such that every graph in $\mathscr C$ is an induced subgraph of the
  graph $\mathbf U$ with vertex set~$\mathbb N^k$ and adjacency
  relation defined by $\phi$.  We consider the formula
  $\phi'(\bar x',\bar y')$ with $|\bar x'|=|\bar y'|=k+1$ defined by
  \[
  \phi'(\bar x',\bar y'):=\phi(x_1,\dots,x_k,y_1,\dots,y_k)\wedge\neg
  (x_{k+1}=y_{k+1})
  \]	
  and we let $\mathbf U'$ be the corresponding graph definable on
  $\mathbb N^{k+1}$. 

\smallskip  
  	Assume towards a contradiction that $\mathscr C$ is not degenerate.
  For every graph $G\in\mathscr C$ and every
  bipartition $A,B$ of $V(G)$, the bipartite subgraph of $G$
  semi-induced by $A$ and $B$ belongs to the age of $\mathbf U'$.
  Moreover, if $G$ is not $d$-degenerate, then $G$ has such a
  semi-induced subgraph that is not $d/2$-degenerate. It follows that
  the class
  \[
  \mathscr C'=\{H\in{\rm Age}(\mathbf U')\mid K_{s,s}\not\subseteq
  H\text{ and $H$ bipartite}\}
  \]
  is also a counterexample in the sense that it is weakly sparse,
  set-defined and is not degenerate.  Considering a disjunctive normal
  form of $\phi'$, we get that there exists a family $\mathcal F$ of
  sets of pairs of integers in $\{1,\dots,k+1\}$ such that
  $\phi'(\bar x',\bar y')$ is logically equivalent to
  \[
  \bigvee_{P\in\mathcal F}\bigwedge_{(i,j)\in
    P}(x_i=y_j)\wedge\bigwedge_{(i,j)\notin P}\neg(x_i=y_j).
  \]
  
  This means that the edge set of the graphs in $\mathscr C'$ are the
  union of at most $(k+1)^2$ sets of edges, each being defined by a
  formula of the type
  \[
  \phi_{P}(\bar x',\bar y')=\bigwedge_{(i,j)\in
    P}(x_i=y_j)\wedge\bigwedge_{(i,j)\notin P}\neg(x_i=y_j).
  \]
  
  It follows that there exists $P\in\mathcal F$ such that the class
  $\mathscr C_P$ of the subgraphs of the graphs in $\mathscr C'$ with
  edge defined by $\phi_P$ is non-degenerate. It follows that we can
  assume without loss of generality that the formula $\phi'$ has the
  form
  $\bigwedge_{(i,j)\in P}(x_i=y_j)\wedge\bigwedge_{(i,j)\notin
    P}\neg(x_i=y_j)$
  for some set $P$ of pairs of integers in $\{1,\dots,k+1\}$. As the
  graphs we consider are bipartite we can consider an embedding in
  $\mathbb N^{(k+1)^2}$ instead of $\mathbb N^{k+1}$ by duplicating
  the $i$th coordinate of the vertices in the first part and the $j$th
  coordinate of the vertices in the second part to the coordinate
  $(i,j)$. This way we can assume that the formula has the form
  $\bigwedge_{i\in I}(x_i=y_i)\wedge\bigwedge_{i\notin
    I}\neg(x_i=y_i)$.
  As imposing $x_i=y_i$ allows only to create a disjoint union of
  induced subgraphs, it is useless in our setting. Thus we can assume
  that our graph is defined on some $\mathbb N^{k'}$ by the formula
  \[
  \psi(\bar x'',\bar y''):=\bigwedge_{i=1}^{k'} \neg(x_i=y_i).
  \]
  
  Thus our bipartite graphs contradict \Cref{lem:conj2}.
\end{proof}

\section{Order-defined classes}

We now consider a notion sandwiched between semi-algebraic and set-defined.
We call a class $\mathscr C$ {\em order-defined} if it is included in
the age of a graph definable in $(\mathbb Q,<)$, the countable dense
linear order without endpoints.  It is immediate that order-defined
classes are semi-algebraic. Both set-defined classes and order-defined classes are strongly related to the constructions introduced in \cite{trivial} for strongly polynomial sequences.

\begin{example}
  The class of circle graphs is order-defined. It follows that the
  class of cographs, and more generally the class of permutation
  graphs and the class of distance-hereditary graphs (which are both
  contained in the class of circle graphs) are order-defined.
\end{example}

However, we are not aware of any example of a semi-algebraic class that is not order-defined.

\smallskip
The notion of order-defined classes is very similar to the notion of Boolean dimension considered by Gambosi, Ne\v set\v ril, and Talamo \cite{GAMBOSI1990251} and by Ne\v set\v ril and Pudl\'ak \cite{Nesetril1989}. The {\em Boolean dimension} of a poset $P=(X,\leq)$ is the minimum number of linear orders on $X$ a Boolean combination of which gives $\leq$. Thus we have the following property:
\begin{observation}
	If a class $\mathcal P$ of posets has bounded Boolean dimension then the class of the comparability graphs of the posets in $\mathcal P$ is order-defined.
\end{observation}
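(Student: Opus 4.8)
The plan is to realize the comparability graphs inside a single graph $\mathbf U$ definable over $(\mathbb{Q},<)$, using the linear orders that witness bounded Boolean dimension as coordinates. Fix the bound $d$ on the Boolean dimension. For a poset $P=(X,\leq)$ in $\mathcal P$ there are linear orders $L_1,\dots,L_d$ on $X$ and a Boolean function $B\colon\{0,1\}^d\to\{0,1\}$ such that for distinct $x,y\in X$ one has $x\leq y$ if and only if $B(\chi_1(x,y),\dots,\chi_d(x,y))=1$, where $\chi_i(x,y)=1$ exactly when $x$ precedes $y$ in $L_i$. Since $X$ is finite, each $L_i$ is realized by an order-preserving injection $\iota_i\colon X\to\mathbb{Q}$, so $\chi_i(x,y)=1$ iff $\iota_i(x)<\iota_i(y)$. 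Mapping $x\mapsto(\iota_1(x),\dots,\iota_d(x))$ makes each comparison $\chi_i$ first-order definable over $(\mathbb{Q},<)$ from the coordinates, and hence so is the symmetric, irreflexive comparability relation $x\sim y\iff(x\leq y\vee y\leq x)$ for distinct $x,y$.

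The one subtlety is that the Boolean function $B$ may vary from poset to poset, whereas order-definedness requires a \emph{single} universal graph. I would resolve this by absorbing the finitely many possible functions into extra coordinates. List the Boolean functions on $d$ inputs as $B_1,\dots,B_N$, work in $\mathbb{Q}^{d+N}$ with coordinates $(x_1,\dots,x_d,z_1,\dots,z_N)$, and introduce the order-formula $\mathrm{type}_m(\bar x):=\bigwedge_{j\neq m}(z_j<z_m)$, selecting the tuples whose $m$-th marker coordinate is strictly largest. Define $\mathbf U$ on $\mathbb{Q}^{d+N}$ by the symmetric, irreflexive adjacency obtained (via the convention for simple interpretations recalled in the preliminaries, $\neg(\bar x=\bar y)\wedge(\psi(\bar x,\bar y)\vee\psi(\bar y,\bar x))$) from
\[
\psi(\bar x,\bar y):=\bigvee_{m=1}^{N}\Big(\mathrm{type}_m(\bar x)\wedge\mathrm{type}_m(\bar y)\wedge B_m\big(x_1<y_1,\dots,x_d<y_d\big)\Big),
\]
where $B_m(\cdots)$ denotes the Boolean combination of the atomic formulas $x_i<y_i$ prescribed by $B_m$. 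This $\mathbf U$ is definable in $(\mathbb{Q},<)$.

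To embed a poset $P$ whose Boolean dimension is witnessed by $B_m$, I would map $x\mapsto(\iota_1(x),\dots,\iota_d(x),z_1,\dots,z_N)$ with the markers chosen so that $\mathrm{type}_m$ holds and $\mathrm{type}_{m'}$ fails for $m'\neq m$ (for instance $z_m=1$ and $z_j=0$ for $j\neq m$, which is legitimate since the images stay pairwise distinct thanks to the injective $\iota_i$). For two such images only the $m$-th disjunct of $\psi$ can be active, so the induced adjacency in $\mathbf U$ is exactly $B_m(\chi(x,y))\vee B_m(\chi(y,x))$, that is, $x\leq y$ or $y\leq x$, that is, comparability in $P$. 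Hence the map is an isomorphism onto an induced subgraph of $\mathbf U$, and every comparability graph of a poset in $\mathcal P$ lies in $\mathrm{Age}(\mathbf U)$, as required.

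The main obstacle is exactly the uniformity issue just described: a naive argument produces a different defining formula for each poset, and the real work is to merge these finitely many formulas into one graph definable in $(\mathbb{Q},<)$ \emph{without} access to definable constants, since $(\mathbb{Q},<)$ is homogeneous. The marker-coordinate device, which reads off the intended Boolean function from the order type of the auxiliary coordinates rather than from named constants, is what makes a single universal $\mathbf U$ possible.
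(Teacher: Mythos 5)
Your proof is correct and takes the route the paper implicitly intends: the paper states this observation with no proof at all (just ``Thus we have the following property'' after defining Boolean dimension), the intended argument being exactly yours --- realize each witnessing linear order by an order-preserving injection into $\mathbb{Q}$ and turn the Boolean combination into a first-order formula over $(\mathbb{Q},<)$. Your marker-coordinate device correctly handles the one genuine subtlety that this terse phrasing glosses over, namely that the Boolean function may vary from poset to poset while order-definedness demands a \emph{single} graph $\mathbf{U}$ definable in $(\mathbb{Q},<)$, a structure with no definable constants; since there are only finitely many ($2^{2^d}$) Boolean functions on $d$ inputs, encoding the choice in the order type of auxiliary coordinates is a legitimate and complete fix.
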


The conjecture on boundedness of Ne\v set\v ril and Pudl\'ak \cite{Nesetril1989} on the boundedness of the Boolean dimension of planar posets can thus be weakened as follows.
\begin{conjecture}
	The class of comparability graphs of planar posets is order-defined.
\end{conjecture}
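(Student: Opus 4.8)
The plan is to prove the statement by encoding each comparability graph into one fixed universal graph definable over $(\Q,<)$, using the Boolean-dimension representation of the posets. Write $d$ for the bound on the Boolean dimension of the posets in $\mathcal P$. For each $P=(X,\leq_P)\in\mathcal P$, fix linear orders $L_1,\dots,L_d$ on $X$ and a Boolean function $\Phi\colon\{0,1\}^d\to\{0,1\}$ such that $x<_P y\iff\Phi(b_1,\dots,b_d)$, where $b_i$ is the truth value of $x<_{L_i}y$. Since the comparability graph $\mathrm{Comp}(P)$ joins $x$ and $y$ exactly when $x<_P y$ or $y<_P x$, and since $y<_P x$ corresponds to flipping every $b_i$, adjacency in $\mathrm{Comp}(P)$ is the fixed Boolean combination $\Phi(b_1,\dots,b_d)\vee\Phi(1-b_1,\dots,1-b_d)$ of the bits $b_i$.

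First I would handle a single Boolean function $\Phi$. Embed $X$ into $\Q^d$ by sending $x$ to the tuple of its positions in $L_1,\dots,L_d$, each finite linear order being realized inside $(\Q,<)$; this map $p$ is injective (already $L_1$ separates points), and distinct elements get distinct values in \emph{every} coordinate, so $b_i$ is the truth value of $p(x)_i<p(y)_i$. Consequently the adjacency relation of $\mathrm{Comp}(P)$ is defined on the image by the first-order formula $\psi_\Phi(\bar x,\bar y)$ over $(\Q,<)$ obtained by substituting the atomic comparisons $x_i<y_i$ for the bits $b_i$ in the Boolean expression $\Phi(b_1,\dots,b_d)\vee\Phi(1-b_1,\dots,1-b_d)$. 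Thus, for fixed $\Phi$, every comparability graph representable with $d$ orders and this $\Phi$ lies in the age of the graph on $\Q^d$ defined by $\psi_\Phi$.

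The main point is then to merge these finitely many representations into one formula, since the witnessing Boolean function may vary with the poset while \emph{order-defined} requires a \emph{single} universal graph. There are only $N=2^{2^d}$ Boolean functions on $d$ variables; enumerate them as $\Phi_1,\dots,\Phi_N$. I would adjoin $2\lceil\log_2 N\rceil$ selector coordinates and encode the index $m$ of the function used by $P$ in the relative order of these coordinates, giving every vertex of $P$ the same selector pattern $\mathrm{sel}_m$. Working on $\Q^{k}$ with $k=d+2\lceil\log_2 N\rceil$, set
\[
\Psi(\bar x,\bar y)\;:=\;\bigvee_{m=1}^{N}\bigl(\mathrm{sel}_m(\bar x)\wedge\mathrm{sel}_m(\bar y)\wedge\psi_{\Phi_m}(\bar x,\bar y)\bigr),
\]
where each $\mathrm{sel}_m$ and each $\psi_{\Phi_m}$ is a Boolean combination of order comparisons among coordinates. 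For two vertices of a single poset $P$, which share the selector pattern $m$, exactly the $m$-th disjunct is active and reproduces adjacency in $\mathrm{Comp}(P)$; on tuples realizing no admissible pattern the value of $\Psi$ is irrelevant to the embeddings. Replacing $\Psi$ by $\neg(\bar x=\bar y)\wedge(\Psi(\bar x,\bar y)\vee\Psi(\bar y,\bar x))$ to make it irreflexive and symmetric, the graph $\mathbf U$ it defines on $\Q^{k}$ has every $\mathrm{Comp}(P)$ with $P\in\mathcal P$ as an induced subgraph, so the class is order-defined.

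The only delicate point is the selector construction: because the language of $(\Q,<)$ has no constants, the index of the Boolean function cannot be stored as an absolute coordinate value and must instead be recorded by \emph{within-tuple} order comparisons, chosen so that each admissible tuple carries a unique index. Everything else is a routine substitution of atomic order formulas for Boolean variables, and the finiteness of the set of $d$-ary Boolean functions is exactly what keeps $\Psi$ a single first-order formula.
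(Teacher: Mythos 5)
There is a fundamental gap, and it is located in your very first sentence: ``Write $d$ for the bound on the Boolean dimension of the posets in $\mathcal P$.'' No such bound is known to exist for planar posets. Whether planar posets have bounded Boolean dimension is precisely the open conjecture of Ne\v set\v ril and Pudl\'ak that the paper cites immediately before this statement, and the statement you are asked to prove is explicitly presented as a \emph{weakening} of that conjecture: order-definedness of the comparability graphs might hold even if bounded Boolean dimension fails, which is the whole point of stating the weaker form. Indeed, the statement is a conjecture in the paper --- it has no proof there, and it cannot be obtained by assuming the stronger open problem as a hypothesis.

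What you have actually written is a (correct, and rather carefully uniformized) proof of the paper's preceding Observation: \emph{if} a class $\mathcal P$ of posets has bounded Boolean dimension, \emph{then} the class of comparability graphs of the posets in $\mathcal P$ is order-defined. Your handling of the symmetrization $\Phi(b_1,\dots,b_d)\vee\Phi(1-b_1,\dots,1-b_d)$ is right, and your selector-coordinate trick --- encoding which of the finitely many Boolean functions $\Phi_1,\dots,\Phi_N$ is in use by within-tuple order comparisons, so that all posets embed into a single graph definable in $(\Q,<)$ --- is a genuine contribution beyond what the paper spells out, since order-definedness does require one universal graph and the language of $(\Q,<)$ has no constants to store an index. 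But composing this implication with an unproven antecedent does not prove the conjecture; it only reproves the reduction the paper already records, leaving the actual question (finding a $(\Q,<)$-definable universal graph for comparability graphs of planar posets \emph{without} going through Boolean dimension) untouched.
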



Every set-defined class has bounded order-dimension and is
order-defined. The converse might be true.

\begin{problem}
  Is every order-defined class with bounded order-dimension
  set-defined?
\end{problem}

\section{Inherited regularity of $2$-covered classes}
\label{sec:2cov}
In this section, we show that when a class $\mathscr C$ is $2$-covered by a class $\mathscr D$ we can deduce that $\mathscr C$ inherits many properties of the class $\mathscr D$, including some regularity properties.

\begin{theorem}
\label{thm:2cov}
	Assume $\mathscr D$ is a distal-defined (resp.\ semi-algebraic, order-defined, set-defined) class and that the class $\mathscr C$ is $2$-covered by $\mathscr D$. 
	Then the class $\mathscr C$ is also 
distal-defined (resp.\ semi-algebraic, order-defined, set-defined).
\end{theorem}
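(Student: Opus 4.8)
The plan is to turn the single universal graph witnessing that $\mathscr D$ is distal-defined (resp.\ semi-algebraic, order-defined, set-defined) into a single universal graph witnessing the same property for $\mathscr C$, and to do so by one uniform construction covering all four cases at once. Let $\strM$ denote the relevant base structure (the pure set $\mathbb N$, the order $(\mathbb Q,<)$, the real field, or a distal structure, depending on the case), with universe $M$, and let $\phi(\bar x,\bar y)$ with $|\bar x|=|\bar y|=d$ be a formula definable in $\strM$ such that $\mathscr D\subseteq{\rm Age}(\mathbf U_{\mathscr D})$, where $\mathbf U_{\mathscr D}$ is the graph on $M^d$ defined by $\phi$. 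Fix the magnitude $K=K(2)\geq 2$ of the $2$-cover. For $G\in\mathscr C$ with $2$-cover $V_1\cup\dots\cup V_K$, each induced pair $G[V_i\cup V_j]$ lies in $\mathscr D$, so it embeds into $\mathbf U_{\mathscr D}$ via some $h_{\{i,j\}}\colon V_i\cup V_j\to M^d$ with $\{u,v\}\in E(G)\iff \mathbf U_{\mathscr D}\models\phi(h_{\{i,j\}}(u),h_{\{i,j\}}(v))$ for all $u,v\in V_i\cup V_j$. The point is that these local embeddings already carry \emph{all} adjacency information of $G$: an edge between distinct parts $i,j$ is read off from $h_{\{i,j\}}$, and an edge inside a part $i$ is read off from $h_{\{i,\sigma(i)\}}$ for any fixed $\sigma(i)\neq i$ (which exists since $K\geq 2$).

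Next I would assemble these into one graph $\mathbf U_{\mathscr C}$ on $M^{1+K+Kd}$, reading a tuple as an anchor coordinate $t_0$, color coordinates $t_1,\dots,t_K$, and blocks $B_1,\dots,B_K\in M^d$. A vertex $v\in V_i$ is mapped to the tuple whose block $B_j$ equals $h_{\{i,j\}}(v)$ for each $j\neq i$, with $t_i=t_0$, while $t_c$ for $c\neq i$ is given a fresh value distinct from $t_0$ and the block $B_i$ is arbitrary. Detecting a color is then the \emph{purely equational} condition ${\rm col}_c(x):=(t_c=t_0)$, which depends only on $x$'s own coordinates, so exactly one ${\rm col}_c(x)$ holds for each vertex. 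Adjacency in $\mathbf U_{\mathscr C}$ is defined, writing $B^x_j$ for the $j$-th block of $x$, by
\[
\Phi(x,y):=\bigvee_{i\neq j}\bigl({\rm col}_i(x)\wedge{\rm col}_j(y)\wedge\phi(B^x_j,B^y_i)\bigr)\;\vee\;\bigvee_{i}\bigl({\rm col}_i(x)\wedge{\rm col}_i(y)\wedge\phi(B^x_{\sigma(i)},B^y_{\sigma(i)})\bigr).
\]
Since each vertex carries exactly one color, for any pair of vertices exactly one disjunct is active, and it reads precisely the blocks produced by the local embedding relevant to that pair; hence $\Phi$ recovers $G$ as an induced subgraph of $\mathbf U_{\mathscr C}$. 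As $\Phi$ is a Boolean combination of $\phi$ (definable in $\strM$) and equalities (definable in \emph{every} structure), $\mathbf U_{\mathscr C}$ is definable in $\strM$, giving $\mathscr C\subseteq{\rm Age}(\mathbf U_{\mathscr C})$; in the semi-algebraic case the number of polynomial conditions grows only by a factor depending on $K$ and the color tests have degree at most two, so the complexity stays bounded. This yields the conclusion in all four cases simultaneously.

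The only genuine obstacle is the uniform treatment of the \emph{color selection}: one must encode the bounded number of part-indices inside the coordinate tuple without leaving whatever restricted language defines $\mathbf U_{\mathscr D}$ — in particular, in the distal case one cannot simply name $K$ constants. The anchor encoding ${\rm col}_c(x)\equiv(t_c=t_0)$ resolves this at once, since equality is definable in any structure and the infinitely many fresh values needed for the private color coordinates are available in any infinite base. Everything else — that exactly one disjunct fires per pair, and that the never-read coordinates ($B_i$ and the off-color $t_c$'s) cannot create spurious adjacencies or spurious colors — is a routine consequence of the exactness of the unary color guards.
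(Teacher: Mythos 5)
Your proof is correct and takes essentially the same approach as the paper's: both encode each vertex of $G$ as a tuple over the base structure recording its images under the local embeddings of the pairs $G[V_i\cup V_j]$ together with equality-based bookkeeping coordinates identifying its part, and both define adjacency of the universal graph by a Boolean combination of $\phi$ and equalities, so definability transfers uniformly in all four cases. The differences are only in the encoding: the paper uses a single anchor coordinate with a conjunctive formula whose equality clauses act as escape hatches (reading intra-part edges from all pairs containing $i$), whereas you use explicit color coordinates with a disjunctive, color-guarded formula and a designated partner $\sigma(i)$ for intra-part edges.
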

\begin{proof}
		Let $\mathbf U$ be a graph definable in a distal structure (resp.\ a semi-algebraic graph, a graph definable in $(\mathbb Q,<)$, a graph definable in $\mathbb N$) such that $\mathscr D\subseteq{\rm Age}(\mathbf U)$. 
		Assume that $\mathscr C$ is $2$-covered by $\mathscr D$ with magnitude $p$ and let $q\coloneqq \binom{[p]}{2}+1$. 
		We consider the graph $\mathbf H$ whose vertex set is the set of all tuples $\bar v\in V(\mathbf U)^q$ (we address the elements of the
tuples by indices $0$ and $\{i,j\}$ for $1\leq i<j\leq p$) 
and whose edges are defined by the formula 
\[
\phi(\bar x,\bar y):=
		\bigwedge_{1\leq i<j\leq p} (x_{\{i,j\}}=x_0)\vee (y_{\{i,j\}}=y_0)\vee E(x_{\{i,j\}},y_{\{i,j\}}).
\]
		Let $G\in\mathscr C$. By assumption there exists a partition $V_1,\dots,V_p$ of $V(G)$ such that for every $1\leq i<j\leq p$ we have $G[V_i\cup V_j]\in\mathscr D$.
		We denote by $f_{i,j}$ the embedding of $G[V_i\cup V_j]$ in $\mathbf U$.
Let $g\colon V(G)\rightarrow V(\mathbf U)^q$ be defined as follows: 
let $u\in V_i$ and let $z(u)$ be an arbitrary vertex of $V(\mathbf U)\setminus \{f_{i,j}(u): j\in [p]\setminus\{i\}\}$.
We let $g(v)=\bar x$, where $x_0=z(u)$ and 
\[
	x_{\{k,\ell\}}=\begin{cases}
		z(u)&\text{if }i\notin\{k,\ell\}\\
		f_{\{k,\ell\}}(u)&\text{otherwise}
	\end{cases}
\]

Then it is easily checked that $g$ induces an isomorphism of $G$ and its image in $\mathbf H$.	 As $\mathbf H$ is definable in $\mathbf U$ we infer that the class $\mathscr C$ is  
distal-defined (resp. semi-algebraic, order-defined, set-defined).
\end{proof}

\smallskip
\begin{corollary}
\label{crl:rwcovers-setdefined}
  A class has structurally bounded expansion if and only if it has low
  linear rankwidth covers and is set-defined. 
\end{corollary}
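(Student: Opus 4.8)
The plan is to reduce everything to \Cref{thm:sbe}, which says that a class has structurally bounded expansion if and only if it has low shrubdepth covers. Thus it suffices to prove that a class $\mathscr C$ has low shrubdepth covers if and only if it has low linear rankwidth covers and is set-defined. The forward implication is essentially a matter of recording two known facts, while the converse is where the set-definedness is genuinely used.

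For the forward direction, suppose $\mathscr C$ has low shrubdepth covers. Bounded shrubdepth implies bounded linear rankwidth (shrubdepth sits below linear rankwidth just as treedepth sits below pathwidth; see~\cite{SODA_msrw}), so every $p$-cover of $\mathscr C$ by a class of bounded shrubdepth is in particular a $p$-cover by a class of bounded linear rankwidth, and hence $\mathscr C$ has low linear rankwidth covers. Taking $p=2$, the class $\mathscr C$ is $2$-covered by a class $\mathscr D$ of bounded shrubdepth; by \Cref{lem:sd-setdef} the class $\mathscr D$ is set-defined, and then \Cref{thm:2cov} yields that $\mathscr C$ is set-defined.

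For the converse, assume $\mathscr C$ is set-defined and has low linear rankwidth covers. Since $\mathbb N$ is stable, set-definedness entails bounded order-dimension and hence stability, and this property passes to the hereditary closure of $\mathscr C$ (the age of the ambient graph is closed under induced subgraphs). Fix $p$ and a $p$-cover of $\mathscr C$ by a class of bounded linear rankwidth; the class $\mathscr B_p$ of all pieces $G[V_{i_1}\cup\dots\cup V_{i_p}]$ arising from this cover then has bounded linear rankwidth and, lying in the hereditary closure of $\mathscr C$, is set-defined and therefore stable. The key external ingredient is the main result of~\cite{SODA_msrw}: a stable class of bounded linear rankwidth has structurally bounded expansion, equivalently (by \Cref{thm:sbe}) low shrubdepth covers. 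Applying this to $\mathscr B_p$ gives, for each $r$, an $r$-cover of $\mathscr B_p$ by a class of bounded shrubdepth. It then remains to compose covers: $\mathscr C$ is $p$-covered by $\mathscr B_p$, and $\mathscr B_p$ is $p$-covered by a class of bounded shrubdepth, so refining the first partition by the partitions witnessing the covers of the pieces and checking the cover property on any $p$ chosen parts produces a $p$-cover of $\mathscr C$ by a bounded-shrubdepth class of bounded magnitude (here one uses that bounded shrubdepth is hereditary). As $p$ was arbitrary, $\mathscr C$ has low shrubdepth covers, and \Cref{thm:sbe} concludes.

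The main obstacle is the converse direction, and within it the appeal to the stability-versus-linear-rankwidth result of~\cite{SODA_msrw}. This is precisely where set-definedness is indispensable: it cannot be dropped, since the class of all paths has bounded linear rankwidth and is set-defined yet has unbounded shrubdepth, and the argument must route through low covers rather than through the pieces themselves. The composition of low covers is routine but must be carried out with care to keep the final magnitude bounded.
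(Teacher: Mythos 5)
Your proof is correct, and its forward direction coincides exactly with the paper's: \Cref{thm:sbe} gives low shrubdepth covers, hence low linear rankwidth covers, and set-definedness follows from \Cref{lem:sd-setdef} combined with \Cref{thm:2cov}. In the converse direction, however, you take a genuinely different route. The paper invokes in a single step the \emph{covers} version of the ``rankwidth meets stability'' theorem, cited from \cite{msrw}: a class with low linear rankwidth covers and bounded order-dimension is a transduction of a class with low pathwidth covers, hence has structurally bounded expansion. You instead invoke the \emph{bounded linear rankwidth} version from \cite{SODA_msrw} (a stable class of bounded linear rankwidth has structurally bounded expansion), apply it to the class $\mathscr B_p$ of pieces of each cover (legitimately: ages are hereditary, so the pieces remain set-defined, hence of bounded order-dimension, which for bounded linear rankwidth classes is the relevant form of stability), and then compose covers by taking the common refinement of the cover of $G$ with the shrubdepth-cover partitions of all pieces. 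This composition does work: the magnitude stays bounded (at most $K\cdot K'^{\binom{K-1}{p-1}}$ parts when the two covers have magnitudes $K$ and $K'$), and any $p$ refined parts lie inside $p$ parts of a single piece's partition, so heredity of bounded shrubdepth closes the argument. What your route buys is that only the non-covers theorem is needed; what it costs is carrying out the composition argument, which in effect re-derives the covers statement that the paper cites outright. One side remark of yours is off, though it does not affect the proof: the class of all paths does not show that set-definedness ``cannot be dropped'' from the statement, since paths have bounded expansion and a fortiori structurally bounded expansion; that example only shows that the pieces $\mathscr B_p$ need not have bounded shrubdepth, which is the correct reason why one must route through low covers. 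For indispensability of the set-defined hypothesis, the class of half-graphs (bounded linear rankwidth, unbounded order-dimension, hence not structurally bounded expansion) is the example you want.
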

\begin{proof}
  Every class $\Cc$ with structurally bounded expansion has low shrubdepth
  covers (by \Cref{thm:sbe}), thus, has low linear rankwidth covers. 
  As classes with bounded shrubdepth are set-defined 
  by~\Cref{lem:sd-setdef}, it follows from \Cref{thm:2cov} that
  $\Cc$ is set-defined. 
  
  \smallskip
  Conversely, assume $\mathscr C$ has low linear rankwidth covers and
  is set-defined. As~$\mathscr C$ is set-defined, it has bounded
  order-dimension. According to \cite{msrw}, a graph with bounded
  linear rankwidth covers and bounded order-dimension is a transduction of a
  graph with bounded pathwidth covers. It follows that $\mathscr C$ has
  structurally bounded expansion.
\end{proof}

The following theorem is possibly the first purely model theoretical characterization of bounded expansion classes.
\begin{theorem}
	A hereditary class $\mathscr C$ has bounded expansion if and only if $\Cc$ is weakly sparse 
	and if all transductions of $\mathscr C$ are set-defined.
\end{theorem}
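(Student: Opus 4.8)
The plan is to prove both directions, leveraging the characterization of bounded expansion via low-treedepth covers (the cited theorem that $\mathscr C$ has bounded expansion iff it is $p$-covered by bounded-treedepth classes for each $p$) together with the machinery already developed in the excerpt.

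For the forward direction, suppose $\mathscr C$ has bounded expansion. Then $\mathscr C$ is weakly sparse, since a class of bounded expansion excludes some biclique $K_{s,s}$ as a subgraph (bounded average degree of shallow topological minors forces this). For the second clause, I would use the fact that bounded expansion is preserved in a controlled way: every transduction of a bounded expansion class has \emph{structurally} bounded expansion. By \Cref{crl:rwcovers-setdefined}, every structurally bounded expansion class is set-defined. Hence every transduction of $\mathscr C$ is set-defined, as required.

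For the converse, assume $\mathscr C$ is hereditary, weakly sparse, and all its transductions are set-defined. First I would observe that since the identity is (trivially) a transduction, $\mathscr C$ itself is set-defined; combined with weak sparseness, \Cref{thm:conj1} yields that $\mathscr C$ is \emph{degenerate}. The task then reduces to upgrading degeneracy to bounded expansion. The natural route is via low-treedepth covers: I want to show that for each $p$, the class $\mathscr C$ is $p$-covered by a class of bounded treedepth. The hypothesis that \emph{all} transductions are set-defined is the crucial strengthening here—degeneracy alone does not imply bounded expansion (e.g.\ $1$-subdivisions of cliques are $2$-degenerate but not of bounded expansion), so the transduction hypothesis must be what rules such examples out. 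The idea is that a suitable transduction of $\mathscr C$ would encode the shallow topological minors or the low-treedepth structure, and if $\mathscr C$ had unbounded expansion, one could produce via transduction a class of unbounded order-dimension (for instance encoding half-graphs or long shift-like structures inside dense shallow minors), contradicting that the transduction is set-defined and hence of bounded order-dimension.

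The main obstacle will be precisely this converse implication: formalizing how unbounded expansion, together with weak sparseness, allows a transduction that is \emph{not} set-defined. Concretely, I expect to argue contrapositively: if $\mathscr C$ does not have bounded expansion, then for some depth $r$ the shallow topological minors $G\shtm r$ have unbounded average degree while (by weak sparseness) excluding $K_{s,s}$, which by Kővári–Sós–Turán forces arbitrarily large subdivided bicliques or dense bipartite incidence patterns at bounded depth. One then designs a fixed transduction (using the bounded number of subdivision vertices as colored ``reconnection'' markers) that contracts these paths to recover bicliques or, more carefully, graphs realizing the order property of unbounded order-dimension. Since bounded order-dimension is a transduction-stable consequence of being set-defined (via monadic stability, \Cref{crl:nd-stable} and \Cref{def:stability}), such a transduction cannot be set-defined, giving the contradiction. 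Pinning down this transduction uniformly in $r$—so that a single interpretation plus monadic lift handles all depths—is the delicate technical heart of the argument.
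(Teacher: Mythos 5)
Your forward direction is exactly the paper's: bounded expansion gives weak sparseness, every transduction of $\mathscr C$ has structurally bounded expansion by definition, and \Cref{crl:rwcovers-setdefined} then makes it set-defined. The converse, however, has a genuine gap, and it sits precisely at what you call the ``delicate technical heart.''

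The gap: your plan is to refute set-definedness of a suitable transduction by exhibiting unbounded order-dimension (half-graphs, shift-like structures). This cannot work in the hard case. By \cite{dvovrak2018induced} there is a \emph{fixed} $p$ (so your worry about uniformity in $r$ is moot) such that $\mathscr C$ contains induced $p$-subdivisions of graphs of arbitrarily large average degree, and by \cite{Kuhn2004} these dense graphs may be taken $C_4$-free --- think of incidence graphs of projective planes. But a $K_{s,s}$-subgraph-free graph has order-dimension less than $2s$: in a semi-induced half-graph $a_1,\dots,a_{2s},b_1,\dots,b_{2s}$ the vertices $a_1,\dots,a_s$ and $b_{s+1},\dots,b_{2s}$ form a $K_{s,s}$ subgraph. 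A $C_4$-free graph even has VC-dimension at most $2$. So the graphs you recover by contracting subdivision paths, and all their subgraphs, are again $C_4$-free, hence of \emph{bounded} order-dimension and bounded VC-dimension; no transduction built this way yields half-graphs, and no dimension-type invariant can witness failure of set-definedness here. Set-definedness is strictly stronger than ``bounded order-dimension and semi-algebraic,'' and the only tool available to refute it is \Cref{thm:conj1} itself.

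That is what the paper does, and it is where weak sparseness of the \emph{transduced} class (not merely of $\mathscr C$) earns its keep: contract the induced $p$-subdivisions by an easy transduction to obtain a class $\mathscr D$ of $C_4$-free graphs of unbounded average degree; $\mathscr D$ is weakly sparse and, by hypothesis, set-defined, hence degenerate by \Cref{thm:conj1} --- contradicting its unbounded average degree. Your opening move (applying \Cref{thm:conj1} to $\mathscr C$ itself to conclude degeneracy) is correct but, as you yourself observe, does not advance the argument; the repair is to aim \Cref{thm:conj1} at the contracted class after securing $C_4$-freeness via \cite{Kuhn2004}, rather than to hunt for the order property.
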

\begin{proof}
	If $\mathscr C$ has bounded expansion, then it is weakly sparse and 
	all transductions of $\mathscr C$  are set-defined by \Cref{crl:rwcovers-setdefined}.

\smallskip	
Conversely, assume towards a contradiction that $\Cc$ is weakly
sparse and all transductions of $\mathscr C$ are set-defined and that 
$\Cc$ fails to have bounded expansion. By \cite{dvovrak2018induced} there exists an integer $p$ such that $\mathscr C$ includes the $p$-subdivision of graphs with arbitrarily large average degree. According to \cite{Kuhn2004}, 
$\mathscr C$ also includes the $p$-subdivision of $C_4$-free graphs with arbitrarily large average degree. By an easy transduction we obtain from $\mathscr C$ a class $\mathscr D$ of $C_4$-free graphs with unbounded average degree. This class $\mathscr D$ being weakly sparse and (by assumption) set-defined, contradicting \Cref{thm:conj1}.
\end{proof}
%

\smallskip
\begin{theorem}
 	Assume $\mathscr D$ is a class with bounded VC-dimension (resp.\ bounded order-dimension) and that the class $\mathscr C$ is $2$-covered by $\mathscr D$. 
	Then the class $\mathscr C$ also has bounded VC-dimension (resp.\ bounded order-dimension).
\end{theorem}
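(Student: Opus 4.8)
The plan is to bound the relevant dimension of an arbitrary $G\in\mathscr C$ by a function of the bound for $\mathscr D$ and of the magnitude $K$ of the $2$-cover, the idea being that any shattering configuration (respectively any order configuration) witnessed in $G$ can be confined to a single pair of parts of the $2$-cover. Fix $G\in\mathscr C$ together with a witnessing partition $V_1,\dots,V_K$, so that $G[V_i\cup V_j]\in\mathscr D$ for all $i<j$. Both bounded VC-dimension and bounded order-dimension are inherited by induced subgraphs, since a witnessing configuration in an induced subgraph is already one in the ambient graph; moreover, for $K\ge 2$ every $G[V_s]$ is an induced subgraph of some $G[V_s\cup V_{s'}]\in\mathscr D$. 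Hence each graph $G[V_s\cup V_t]$ (allowing $s=t$) has VC-dimension at most $d$ (respectively order-dimension at most $\ell$), where $d$ (respectively $\ell$) is the bound for $\mathscr D$.

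For the order-dimension case, suppose that $a_1,\dots,a_N,b_1,\dots,b_N$ witness order-dimension $N$ in $G$, i.e.\ $\{a_i,b_j\}\in E(G)\iff i\le j$. Consider the map $[N]\to[K]\times[K]$ sending $i$ to the pair of indices of the parts containing $a_i$ and $b_i$. By pigeonhole some fibre $S$ has size at least $N/K^2$; writing $S=\{i_1<\dots<i_m\}$, the vertices $a_{i_1},\dots,a_{i_m},b_{i_1},\dots,b_{i_m}$ all lie in a single $V_s\cup V_t$ and witness order-dimension $m$ there. Hence $m\le\ell$, giving $N\le\ell K^2$, so $\mathscr C$ has bounded order-dimension.

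For the VC-dimension case, suppose that $a_1,\dots,a_N$ together with vertices $b_J$ ($J\subseteq[N]$) witness VC-dimension $N$. By pigeonhole there is a part $V_s$ containing a set $A=\{a_i:i\in I\}$ with $m\coloneqq|I|\ge N/K$. For each $J\subseteq I$ the vertex $b_J$ has neighbourhood trace $\{a_i:i\in J\}$ on $A$, and distinct $J$ give distinct traces, so all $2^m$ subsets of $A$ occur as traces. On the other hand, for each fixed part $V_t$ the graph $G[V_s\cup V_t]$ has VC-dimension at most $d$, so by the classical Sauer--Shelah lemma the vertices of $V_t$ realize at most $\sum_{i=0}^d\binom{m}{i}$ distinct traces on $A$. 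Summing over the (at most $K$) parts that may contain the $b_J$ yields $2^m\le K\sum_{i=0}^d\binom{m}{i}\le K(m+1)^d$, which bounds $m$, and hence $N\le Km$, by a constant depending only on $K$ and $d$. The main obstacle is precisely this last step: unlike in the order-dimension case one cannot confine the exponentially many vertices $b_J$ to a single part by pigeonhole, so one must instead count neighbourhood traces part by part and invoke Sauer--Shelah to turn the bounded VC-dimension of each pair $G[V_s\cup V_t]$ into a polynomial bound on the number of traces it can realize, which is what defeats the exponential count $2^m$.
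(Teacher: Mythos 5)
Your proposal is correct and takes essentially the same approach as the paper: for order-dimension it is the identical pigeonhole on the pairs of parts containing $(a_i,b_i)$ (the paper phrases this as finding a bichromatic induced half-graph under any $p$-coloring of $H_n$), and for VC-dimension it is the same key combination of pigeonholing the shattered set into one part and invoking the Sauer--Shelah lemma to count neighbourhood traces part by part, which the paper phrases contrapositively as a statement about $p$-colorings of the shattering bipartite graph ${\rm ES}_n$. If anything, your direct version is marginally cleaner, since counting all $2^m$ traces on a single fixed set $A\subseteq V_s$ avoids the paper's step of decomposing whole neighbourhoods $N(b)$ across all color classes.
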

\begin{proof}
	Let ${\rm ES}_n$ be the bipartite graph with vertex set $[n]\cup 2^{[n]}$ and adjacency given by $\{i,I\}\in E({\rm ES}_n)$ if $i\in I$.
	Consider any $p$-coloring of the vertices of ${\rm ES}_n$. Let $A_1,\dots,A_p$ be the color classes of $[n]$ and let $B_1,\dots,B_p$ be the color classes of $2^{[n]}$. We have
\[
	2^n=|\{N(b)\mid b\in 2^{[n]}\}|=\sum_{i=1}^p |\{N(b)\mid b\in B_i\}|\leq \sum_{i=1}^p\sum_{j=1}^p |\{N(b)\cap A_j\mid b\in B_i\}|
\]
	Hence there exists $i,j\in [p]$ such that 
	$|\{N(b)\cap A_j\mid b\in B_i\}|\geq 2^n/p^2$. It follows from the Sauer-Shelah lemma that ${\rm ES}_n$ contains a bichromatic induced subgraph with VC-dimension $\Omega(\frac{1}{\log p}\cdot\frac{n}{\log n})$. In particular, if a class $\mathscr C$ has unbounded VC-dimension and is $2$-covered by a class $\mathscr D$, then $\mathscr D$ has unbounded VC-dimension.

\smallskip	
	Now consider any $p$-coloring of the half-graph $H_n$ with vertices $a_1,\dots,a_n$ and $b_1,\dots,b_n$ where $a_i$ is adjacent to $b_j$ if $i\leq j$.
	By the pigeon-hole principle, there exists a pair of colors $(c_1,c_2)$ and a subset $I\subseteq [n]$ of size at least $n/p^2$ such that for every $i\in I$ the vertex $a_i$ has color $c_1$ and the vertex $b_i$ has color $c_2$. It follows that $H_n$ contains a bi-chromatic induced $H_{n/p^2}$. Consequently, if a class $\mathscr C$ has unbounded order-dimension and is $2$-covered by a class $\mathscr D$, then $\mathscr D$ has unbounded order-dimension.
\end{proof}

\smallskip
Recall the definition of $\epsilon$-nice partitions from \Cref{def:nice-decomposition}: A partition $V_1,\ldots,V_k$ of the vertex set of a graph $G$ is $\epsilon$-nice if 
	 \[
    \sum_{\text{non-homogenous }(V_i,V_j)}\frac{|V_i|\,|V_j|}{n^2}<\epsilon.
    \]

\smallskip
\begin{theorem}[Regularity preservation for $2$-covered classes]
\label{thm:reg2cov}
Let $\mathscr D$ be a class such that for every $\epsilon>0$ every 
graph in $\mathscr D$ has an $\epsilon$-nice partition with $f(\epsilon)$ parts (for some function $f$).
	Let $\mathscr C$ be a class $2$-covered by $\mathscr D$ with magnitude~$p$.
	Then, for every $\epsilon>0$ every graph $G\in\mathscr C$ has an $\epsilon$-nice partition with $K\leq pf(\frac{\epsilon}{p-1})^{p-1}$ parts.
\end{theorem}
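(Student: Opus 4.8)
The plan is to lift the $\epsilon'$-nice partitions of the pairwise unions $G[U_i\cup U_j]$ to a single partition of $G$, by taking inside each block of the $2$-cover the common refinement of the partitions induced by its $p-1$ pairings, and then to verify that the result is $\epsilon$-nice for the choice $\epsilon' := \epsilon/(p-1)$. Concretely, fix $G\in\mathscr C$ together with a $2$-cover $V(G)=U_1\cup\dots\cup U_p$ satisfying $G[U_i\cup U_j]\in\mathscr D$ for all $i<j$. For each pair $i<j$, apply the hypothesis on $\mathscr D$ to obtain an $\epsilon'$-nice partition $\mathcal Q_{ij}$ of $G[U_i\cup U_j]$ into at most $f(\epsilon')$ parts. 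Restricting $\mathcal Q_{ij}$ to $U_i$ gives a partition $\mathcal P^i_j$ of $U_i$ into at most $f(\epsilon')$ parts; let $\mathcal R_i$ be the common refinement of $\{\mathcal P^i_j : j\neq i\}$, a partition of $U_i$ into at most $f(\epsilon')^{p-1}$ parts. The final partition $\mathcal W := \bigcup_i \mathcal R_i$ then has at most $p\,f(\epsilon')^{p-1}=p\,f(\tfrac{\epsilon}{p-1})^{p-1}$ parts, which is the claimed bound on $K$.

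The key tool for the error estimate is \emph{monotonicity under refinement}: if a partition is refined, every non-homogeneous pair of the refinement lies inside a non-homogeneous pair of the coarser partition (sub-pairs of homogeneous pairs are homogeneous), so the total non-homogeneous weight cannot increase. Since the homogeneity of a pair $(A,B)$ with $A\subseteq U_i$ and $B\subseteq U_j$ is the same whether measured in $G$ or in the induced subgraph $G[U_i\cup U_j]$, and since $\mathcal W|_{U_i\cup U_j}=\mathcal R_i\cup\mathcal R_j$ refines $\mathcal Q_{ij}$, I would conclude that the non-homogeneous weight of $\mathcal W$ restricted to $U_i\cup U_j$, normalized by $n^2$, satisfies $N_{ij}<(n_{ij}/n)^2\,\epsilon'$, where $n_{ij}=|U_i|+|U_j|$. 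The factor $(n_{ij}/n)^2$ arises from passing between the $n_{ij}^2$-normalization of $\mathcal Q_{ij}$ and the $n^2$-normalization used for $G$.

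Finally I would carry out the accounting. Write the total non-homogeneous weight of $\mathcal W$ as $T=\sum_i W_{ii}+2\sum_{i<j}W_{ij}$, where $W_{ii}$ is the within-block contribution inside $U_i$ and $W_{ij}$ the (one-directional) cross contribution between $U_i$ and $U_j$. Each cross pair between $U_i$ and $U_j$ is detected once, in $N_{ij}$, whereas each within-block pair inside $U_i$ is detected in $N_{ij}$ for all $p-1$ choices of $j\neq i$; hence $\sum_{i<j}N_{ij}=(p-1)\sum_i W_{ii}+2\sum_{i<j}W_{ij}\ge T$, the last inequality using $p\ge 2$ and nonnegativity. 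Combining this with the per-pair bound and the elementary identity $\sum_{i<j}(u_i+u_j)^2=(p-2)\sum_i u_i^2+n^2\le(p-1)n^2$ (with $u_i=|U_i|$), I obtain $T\le\sum_{i<j}N_{ij}<\epsilon'\sum_{i<j}(n_{ij}/n)^2\le(p-1)\epsilon'=\epsilon$, so $\mathcal W$ is $\epsilon$-nice.

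The main obstacle is precisely this last accounting step: a naive argument charges a full $\epsilon'$ to each of the $\binom{p}{2}$ pairs and only yields $\binom{p}{2}\epsilon'$, far from the target $(p-1)\epsilon'$. Obtaining the sharp factor $p-1$ requires both the refined per-pair estimate $N_{ij}<(n_{ij}/n)^2\epsilon'$ (rather than the crude $N_{ij}<\epsilon'$) and the observation that the $(p-1)$-fold over-counting of within-block pairs is harmless, since all weights are nonnegative and therefore $T\le\sum_{i<j}N_{ij}$ still holds; the identity $\sum_{i<j}(u_i+u_j)^2\le(p-1)n^2$ is what converts the weighted sum of the $N_{ij}$ into the clean bound $\epsilon$.
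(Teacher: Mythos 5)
Your proof is correct and takes essentially the same route as the paper's: the partition you construct (the common refinement, inside each block $U_i$, of the restrictions of the pairwise $\frac{\epsilon}{p-1}$-nice partitions) is exactly the paper's partition $P^i_\alpha=\bigcap_{j\neq i}W^{i,j}_{\alpha_j}$, and both arguments rest on the same two ingredients, namely monotonicity of homogeneity under refinement and the bound $\sum_{i<j}\bigl(|U_i|+|U_j|\bigr)^2\leq (p-1)n^2$. If anything, your accounting is slightly more explicit than the paper's in separating within-block from cross contributions and in observing that the $(p-1)$-fold over-counting of within-block pairs only helps, since all weights are nonnegative.
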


\begin{proof}
	Assume $V(G)=\bigcup_{i=1}^p V_i$ and $G[V_i\cup V_j]\in\mathscr D$ for all $1\leq i<j\leq p$.
	Let $G_{i,j}=G[V_i\cup V_j]$. According to the assumptions, there is an $\frac{\epsilon}{p-1}$-nice partition $\mathcal{P}_{i,j}=(W_1^{i,j},W_2^{i,j},\cdots,W_{K_{i,j}}^{i,j})$, $K_{i,j}\leq f(\frac{\epsilon}{p-1})$, of $V(G_{i,j})$ satisfying
	\[
    \sum_{\text{non-homogeneous }(W_s^{i,j},W_t^{i,j})
    }\frac{|W_s^{i,j}|\,|W_t^{i,j}|}{|V(G_{i,j})|^2}<\frac{\epsilon}{p-1}.
	\]

	For convenience we define $W_{j,i}=W_{i,j}$.
	For $i\in [p]$ let 
\[
	\mathcal I_i=[K_{i,1}]\times\dots [K_{i,i-1}]\times\{0\}\times [K_{i,i+1}]\times [K_{i,p}]
\]
	
	Define a partition $\mathcal P=(P^i_{\alpha})_{1\leq i\leq p,  \alpha\in \mathcal I_i}$ by letting
\[
	P^i_{\alpha}=\bigcap_{j\neq i}W^{i,j}_{\alpha_j}.
\]
%

	Then 
	\[
	|\mathcal P|\le p\,f\Bigl(\frac{\epsilon}{p-1}\Bigr)^{p-1}.
	\]
	
	As cutting homogeneous pairs gives only homogeneous pairs, the only non homogeneous pairs are pairs 
	$(P^i_{\alpha},P^j_{\beta})$, where $\alpha\in\mathcal I_i$, $\beta\in\mathcal I_j$ and 
	$(W^{i,j}_{\alpha_j},W^{i,j}_{\beta_i})$ is not homogeneous.
	Thus we have
\begin{align*}
	&\sum_{\text{non-homogeneous }(P^i_\alpha,P^j_\beta)}\frac{|P^i_\alpha|\,|P^j_\beta|}{n^2}\\
	&\qquad\leq\sum_{\text{non-homogeneous }(W^{i,j}_s,W^{i,j}_t)}
	\sum_{\alpha\in I_i: \alpha_j=s}\sum_{\beta\in I_j: \beta_i=t}
	\frac{|P^i_\alpha|\,|P^j_\beta|}{n^2}\\
	&\qquad\leq \sum_{\text{non-homogeneous }(W^{i,j}_s,W^{i,j}_t)}\frac{|W^{i,j}_s|\,|W^{i,j}_t|}{n^2}\\
	&\qquad\leq \sum_{\text{non-homogeneous }(W^{i,j}_s,W^{i,j}_t)}\frac{|G_{i,j}|^2}{n^2}\frac{|W^{i,j}_s|\,|W^{i,j}_t|}{|G_{i,j}|^2}\\
	&\qquad \leq \biggl(\sum \frac{|G_{i,j}|^2}{n^2}\biggr)\frac{\epsilon}{p-1}<\epsilon.
\end{align*}
\end{proof}

The following corollary is then a direct consequence of \Cref{thm:2cov} and \Cref{lem:equipartition}.

\begin{corollary}
\label{cor:reg2cov_equi}
Let $\mathscr D$ be a class such that for every $\epsilon>0$ every 
graph in $\mathscr D$ has an $\epsilon$-nice partition with $f(\epsilon)$ parts (for some function $f$).
	Let $\mathscr C$ be a class $2$-covered by $\mathscr D$ with magnitude~$p$.
	Then, for every $\epsilon>0$ every graph $G\in\mathscr C$ has an equipartition with $K\leq \frac{2p}{\epsilon}f(\frac{\epsilon}{2(p-1)})^{p-1}$ parts such that all pairs but an $\epsilon$-fraction are homogenous
\end{corollary}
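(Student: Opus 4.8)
The plan is to derive the equipartition in two stages: first use the regularity-preservation theorem for $2$-covered classes to produce a single $\epsilon'$-nice partition of $G$ with few parts, and then turn that partition into an equipartition via \Cref{lem:equipartition}, tuning the auxiliary parameter $\epsilon'$ so that both the final number of parts and the final fraction of non-homogeneous pairs come out as stated. Since $\mathscr C$ is $2$-covered by $\mathscr D$ with magnitude $p$ and every graph of $\mathscr D$ admits $\delta$-nice partitions with $f(\delta)$ parts, the hypotheses of \Cref{thm:reg2cov} are met verbatim, so that theorem applies directly to $\mathscr C$.

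First I would fix $\epsilon>0$ and $G\in\mathscr C$ and apply \Cref{thm:reg2cov} with the auxiliary parameter $\epsilon'\coloneqq \epsilon/2$. This yields an $\epsilon'$-nice partition of $V(G)$ into $k$ parts with
\[
k\ \leq\ p\,f\Bigl(\tfrac{\epsilon'}{p-1}\Bigr)^{p-1}\ =\ p\,f\Bigl(\tfrac{\epsilon}{2(p-1)}\Bigr)^{p-1}.
\]
The choice $\epsilon'=\epsilon/2$ is dictated by the target bound: it is exactly the value that makes the argument of $f$ equal to $\tfrac{\epsilon}{2(p-1)}$. Next I would feed this partition into \Cref{lem:equipartition}, which converts an $\epsilon'$-nice partition into $k$ classes into an \emph{equipartition} into $k/\epsilon'=2k/\epsilon$ classes. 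Substituting the bound on $k$ gives an equipartition with
\[
K\ =\ \frac{2k}{\epsilon}\ \leq\ \frac{2p}{\epsilon}\,f\Bigl(\tfrac{\epsilon}{2(p-1)}\Bigr)^{p-1}
\]
parts, which is precisely the claimed magnitude.

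The only genuinely delicate point — and hence the step I expect to be the main obstacle — is reconciling the fraction of non-homogeneous pairs with the constant produced by \Cref{lem:equipartition}. Applied with internal parameter $\epsilon'$, that lemma guarantees only that all but a $3\epsilon'=\tfrac{3\epsilon}{2}$-fraction of the pairs are homogeneous, rather than an $\epsilon$-fraction. One therefore cannot simply read off both the stated part count and an $\epsilon$-fraction from a single naive composition, since lowering the bad fraction to $\epsilon$ by running \Cref{lem:equipartition} with internal parameter $\epsilon/3$ would simultaneously change the argument of $f$ and inflate the number of parts. Resolving this requires a more careful accounting inside \Cref{lem:equipartition}: in its bookkeeping the number of bad pairs is bounded by the three summands $|\Sigma'|+Kk'+k'^2$, of which the dominant term under the present parameters is $|\Sigma'|<\epsilon' K^2$. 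Sharpening the treatment of the remainder set $Z$ (or equivalently tightening the crude constant $3$ to the value actually needed here) is what lets one keep the fraction at $\epsilon$ while retaining the bound $K\leq \frac{2p}{\epsilon}f(\frac{\epsilon}{2(p-1)})^{p-1}$; once that constant is pinned down, every remaining estimate is a direct substitution.
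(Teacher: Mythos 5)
Your route is the paper's route: the paper obtains this corollary precisely by composing \Cref{thm:reg2cov} with \Cref{lem:equipartition} (the text cites \Cref{thm:2cov}, but \Cref{thm:reg2cov} is clearly what is meant), with the implicit parameter choice $\epsilon'=\epsilon/2$, and your derivation of the part count $K\leq\frac{2p}{\epsilon}f\bigl(\frac{\epsilon}{2(p-1)}\bigr)^{p-1}$ is exactly the intended one.

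The mismatch you flag in your last paragraph is genuine, and it is a defect of the printed corollary rather than of your composition: with $\epsilon'=\epsilon/2$, \Cref{lem:equipartition} only yields that all but a $\frac{3\epsilon}{2}$-fraction of the pairs are homogeneous. However, your proposed repair --- tightening the constant $3$ inside \Cref{lem:equipartition} while keeping the stated part count --- does not go through. In that lemma's accounting the bad pairs split into the inherited ones, a fraction at most $\epsilon'$ (since $|\Sigma'|<\epsilon'K^2$), and the pairs meeting the leftover classes $Z_1,\dots,Z_{k'}$, a fraction of order $2k'/K$; since $k'$ can be nearly $k$ (every $V_i$ may leave a nonempty remainder, and pairs meeting the mixed classes $Z_i$ can all fail to be homogeneous) and $K=\lceil k/\epsilon'\rceil$, this second term is genuinely of order $2\epsilon'$ in the worst case, so no bookkeeping pushes the total below roughly $3\epsilon'=\frac{3\epsilon}{2}$ at this part count. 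What one can actually prove is either the stated part count with a $\frac{3\epsilon}{2}$-fraction of exceptional pairs, or an $\epsilon$-fraction with
\[
K\ \leq\ \frac{3p}{\epsilon}\,f\Bigl(\frac{\epsilon}{3(p-1)}\Bigr)^{p-1}
\]
parts, obtained by running your composition with $\epsilon'=\epsilon/3$ instead. Either form is a faithful ``direct consequence'' of the two cited results; the corollary's printed constants are not.
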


\section{Regularity for classes $2$-covered by embedded $m$-partite cographs}
\label{sec:cog}
In this section, we show that 
 classes $2$-covered by a class of  embedded $m$-partite cographs 
 are order-defined (\Cref{cor:emb-2cov-sa}), hence, satisfy the semi-algebraic regularity lemma (\Cref{thm:sa}).
 Moreover, we give an explicit construction  of an $\epsilon$-nice partition with explicit bound for the number of parts (\Cref{cor:reg_emb_2cov})  in the style of the regularity lemma for distal-defined classes (\Cref{thm:distal_reg}).

\smallskip
A {\em cograph}, or complement-reducible graph, is a graph that can be generated from $K_1$ by complementations and disjoint unions. The tree representation of a cograph $G$ is a rooted tree $T$ (called {\em cotree}), whose leaves are the vertices of $G$ and whose internal nodes represent either disjoint unions or complete joins operations. Some generalizations of cographs have been proposed; e.g.\ bi-cographs~\cite{Giakoumakis1997}, $k$-cographs~\cite{Hung2011}, or $m$-partite cographs~\cite{Ganian2012}. The following extension we present here is very natural:

\begin{definition}
  An {\em embedded $m$-partite cograph} is a graph that can be
  obtained from a plane tree $T$ with a coloring
  $\gamma_L\colon L(T)\rightarrow \{1,\dots,m\}$ and an assignment
  $v\in I(T) \mapsto f_v$, with
  $f_v\colon [m]\times [m]\rightarrow \{0,1\}$, as follows: the vertex
  set of $G$ is~$L(T)$ and two vertices $u$ and $v$ are adjacent if
  (assuming that the branch from~$u\wedge v$ to~$u$ is to the left of
  the branch from $u\wedge v$ to $v$) we have
  $f_{u\wedge v}(\gamma_L(u),\gamma_L(v))=1$.
\end{definition}
By similarity with the case of cographs, the colored plane tree $T$ is
called an {\em embedded cotree} of $T$.

\smallskip
Embedded $m$-partite cographs share some nice properties with $m$-partite cographs: for instance, they are well quasi-ordered for induced subgraph inclusion, and they have bounded rankwidth, which makes their recognition fixed-parameter tractable. Moreover, they are linearly $\chi$-bounded, as for every embedded $m$-partite cograph $G$ the $m$ color classes of $G$ induce cographs thus $\chi(G)\leq m \omega(G)$.

\smallskip
Embedded $m$-partite cographs also generalize 
graphs with bounded shrubdepth~\cite{Ganian2012} and  graphs with
bounded embedded shrubdepth~\cite{SODA_msrw}. It follows that classes $2$-covered by a class of embedded $m$-partite cographs include structurally bounded expansion classes \cite{SBE_drops} and, more generally, class $2$-covered by a class with bounded linear rankwidth \cite{SODA_msrw}. As an example, this includes the class of unit-interval graphs \cite{SODA_msrw,msrw}.

\smallskip
Although embedded $m$-partite cographs fail to be set-defined in general (as witnessed by cographs), we have the following.

\begin{lemma}
\label{lem:empc_order}
  The class of embedded $m$-partite cographs is order-defined.
\end{lemma}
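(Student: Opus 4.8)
The plan is to realize the class inside a single ``universal'' graph $\mathbf U$ that is definable in $(\Q,<)$, which is exactly what order-definedness requires. Concretely, I would fix $d=1+m^2+m$, take $V(\mathbf U)=\Q^d$, and design a quantifier-free $\{<\}$-formula $\phi$ so that every embedded $m$-partite cograph $G$ embeds as an \emph{induced} subgraph of $\mathbf U$. The coordinates attached to a leaf $u\in L(T)$ of an embedded cotree $T$ of $G$ will be of three kinds: a single \emph{plane-order coordinate} $x$, an \emph{orientation coordinate} $y^{a,b}$ for each ordered pair $(a,b)\in[m]^2$, and $m$ \emph{colour-indicator coordinates} $c^1,\dots,c^m$. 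The colour coordinates are chosen so that $c^a$ is the strict maximum among $c^1,\dots,c^m$ precisely when $\gamma_L(u)=a$; then ``$\gamma_L(u)=a$'' is expressed by the order formula $\mathrm{Col}_a(c^1,\dots,c^m):=\bigwedge_{i\neq a}(c^a>c^i)$, which only looks at one vertex's own coordinates.

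For the coordinate $x$, I would embed $T$ as a laminar family of rational intervals following the tree and respecting the plane (left-to-right) order of children, using density of $\Q$ to split each interval into disjoint rational subintervals, and let $x_u$ be the point assigned to the leaf $u$. Then for two leaves $u,v$ the smallest interval containing $x_u$ and $x_v$ corresponds to the meet $w=u\wedge v$, and $x_u<x_v$ holds exactly when the branch from $w$ to $u$ lies to the left of the branch from $w$ to $v$. The key device is the orientation coordinate: for each ordered pair $(a,b)$ I would build a \emph{second} laminar embedding $y^{a,b}$ on the same tree $T$, identical to the plane embedding except that at every internal node $w$ the left-to-right order of the children is reversed exactly when $f_w(a,b)=1$. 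Since the interval nesting still mirrors $T$, the smallest $y^{a,b}$-interval containing $y^{a,b}_u$ and $y^{a,b}_v$ again corresponds to $w=u\wedge v$, and inside it the children hosting $u$ and $v$ appear in plane order if $f_w(a,b)=0$ and in reversed order if $f_w(a,b)=1$. Hence, whenever $x_u<x_v$ (so the branch to $u$ is the left one) and $(a,b)=(\gamma_L(u),\gamma_L(v))$, we get $f_w(\gamma_L(u),\gamma_L(v))=1$ iff $y^{a,b}_u>y^{a,b}_v$.

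Writing a vertex's coordinates as $(x,(y^{a,b}),(c^i))$ and a second vertex's as $(x',(y'^{a,b}),(c'^i))$, the adjacency of $\mathbf U$ is then defined by
\[
\phi:=\bigvee_{a,b\in[m]}\Bigl(\mathrm{Col}_a(\bar c)\wedge\mathrm{Col}_b(\bar c')\wedge\bigl[(x<x'\wedge y^{a,b}>y'^{a,b})\vee(x'<x\wedge y'^{b,a}>y^{b,a})\bigr]\Bigr),
\]
which is a finite Boolean combination of atomic order comparisons, hence defines a graph on $\Q^d$ definable in $(\Q,<)$; one checks $\phi$ is symmetric and irreflexive (or invokes the paper's convention symmetrising a formula). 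By the two observations above, the map $u\mapsto(x_u,(y^{a,b}_u),(c^i_u))$ sends $G$ isomorphically onto an induced subgraph of $\mathbf U$: the first disjunct of the bracket handles pairs with $u$ on the left and applies $f_w(\gamma_L(u),\gamma_L(v))$, and the second handles the symmetric case, applying $f_w(\gamma_L(v),\gamma_L(u))$ through the coordinate $y^{b,a}$.

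The step I expect to be the genuine obstacle is making the dimension $d$ depend only on $m$ while the cotree $T$ has \emph{unbounded depth}: a naive path-encoding would need one coordinate per level. This is exactly what the ``reverse-the-children-on-bit'' laminar trick overcomes, since it localises each internal node's relevant bit $f_w(a,b)$ into a single pairwise comparison $y^{a,b}_u\gtrless y^{a,b}_v$ evaluated precisely at the meet $w$, so that one coordinate per colour pair records the whole depth-unbounded chain of decisions. The remaining verifications—that the laminar constructions are simultaneously realisable over $\Q$, that meets agree across all embeddings because each nesting copies $T$, and that $\phi$ is correct in both orientations—are routine once this localisation is in place.
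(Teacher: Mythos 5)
Your proposal is correct. The key claim — that in a laminar re-embedding of the cotree whose child orders are reversed exactly at nodes $w$ with $f_w(a,b)=1$, the comparison $y^{a,b}_u \gtrless y^{a,b}_v$ reads off the bit $f_{u\wedge v}(a,b)$ — holds because the relative order of two leaves in any such embedding is decided solely by the order of the two child subtrees at their meet, and your formula then handles both orientations correctly via the coordinates $y^{a,b}$ and $y^{b,a}$.

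Your route is genuinely different in execution from the paper's, though it rests on the same underlying decomposition (left-to-right leaf order combined with one cograph-like relation per ordered colour pair). The paper first reduces to the two-colour case, noting that embedded $m$-partite cographs are $2$-covered by embedded $2$-partite cographs, then encodes a $2$-partite cograph by the traversal order $<_T$ together with two auxiliary cographs $G_1,G_2$ (join at $x$ iff $f_x(1,2)=1$, resp.\ $f_x(2,1)=1$), and finally invokes two black boxes: that cographs are order-defined (via their inclusion in circle graphs, stated earlier as an example) and that order-definedness is preserved under $2$-covers (\Cref{thm:2cov}). You instead treat all $m$ colours simultaneously, using one coordinate per ordered pair $(a,b)\in[m]^2$, and your ``reverse-the-children-on-bit'' laminar trick is in effect an inline proof of the classical fact that cographs are permutation graphs — precisely the fact the paper outsources to circle graphs. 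What your approach buys is self-containedness and explicitness: a concrete universal graph on $\Q^{1+m^2+m}$ defined by a quantifier-free order formula, with no appeal to $2$-cover preservation or to circle graphs. What the paper's approach buys is brevity given machinery it has already set up, and it makes visible the $2$-covering structure that the surrounding section exploits for its regularity consequences.
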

\begin{proof}
  Consider an embedded cotree $T$ of an embedded $m$-partite cograph
  $G$. Then the subgraph of $G$ induced by any two colors is obviously
  an embedded $2$-partite cograph. Hence, the class of embedded
  $m$-partite cograph is $2$-covered by the class of $2$-partite
  cographs. To each embedded cotree $T$ of an embedded $2$-partite
  cograph we associate the linear order $<_T$ on the leaves of $T$
  defined by a left-to-right traversal of $T$ and two cotrees $T_1$
  and $T_2$ (on the same rooted tree as $T$), where an internal node
  $x$ defines a complete join in $T_1$ (resp. $T_2$) if $f_x(1,2)=1$
  (resp. $f_x(2,1)=1$) and a disjoint union, otherwise.  These cotrees
  define two graphs $G_1$ and $G_2$ on the vertex set of $G$. Let
  $M_1$ and $M_2$ mark the vertices with color $1$ and color $2$,
  respectively.  Then if $u$ has color $1$ and $v$ has color $2$, the
  vertices $u$ and $v$ are adjacent in $G$ if and only if $u<_T v$ and
  $u$ and $v$ are adjacent in $G_1$, or $u>_T v$ and $u$ and $v$ are
  adjacent in $G_2$.  Let $g_1$ (resp.\ $g_2$) be an embedding of
  $G_1$ (resp.\ $G_2$) in a graph definable in $(\Q,<)$, and let
  ${\rm index}$ be the index of vertices in the linear order
  $<_T$. Then to each vertex $v$ we associate the vector
  $(1,{\rm color}(v), {\rm index}(v), g_1(v),g_2(v))$. It is easily
  checked that this allows to define embedded $2$-partite cographs as
  induced subgraph of a graph definable in $(\Q,<)$.
\end{proof}

\smallskip
Using \Cref{thm:reg2cov} we deduce:
\begin{corollary}
  Every class $\mathscr C$ that is $2$-covered by a class of embedded
  $m$-partite cographs is order-defined.
\end{corollary}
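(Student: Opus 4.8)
The plan is to read this corollary as a one-line composition of the two results that precede it, with no genuinely new work. First I would invoke \Cref{lem:empc_order}, which says that for each fixed $m$ the \emph{entire} class of embedded $m$-partite cographs is order-defined, i.e.\ that all such graphs embed as induced subgraphs of a single graph $\mathbf U$ definable in $(\Q,<)$. The key point to extract is the \emph{uniformity}: a single host graph works for every embedded $m$-partite cograph, which is exactly the format demanded by the order-defined case of \Cref{thm:2cov}.

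Next, since $\mathscr C$ is $2$-covered by some class $\mathscr D$ of embedded $m$-partite cographs, I would note that $\mathscr D$, being a subclass of the full class of embedded $m$-partite cographs, is itself order-defined (order-definedness is a property of lying inside the age of a fixed $(\Q,<)$-definable graph, hence is inherited by subclasses). I would then apply the order-defined case of \Cref{thm:2cov} to this $\mathscr D$, which immediately yields that $\mathscr C$ is order-defined, completing the argument.

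There is essentially no obstacle at this level: all the real content is already packaged in the two cited statements. The substance of \Cref{lem:empc_order} is the construction of the $(\Q,<)$-definable host for embedded $2$-partite cographs, using the left-to-right traversal order $<_T$ together with the two auxiliary cographs $G_1,G_2$ to resolve the ``which branch is to the left'' rule; and the substance of \Cref{thm:2cov} is the tuple-gadget $\mathbf H$ on $V(\mathbf U)^q$ (with $q=\binom{[p]}{2}+1$) that glues a $2$-cover into one definable graph. The only item warranting a moment's care is bookkeeping, namely checking that the class $\mathscr D$ witnessing the $2$-cover of $\mathscr C$ is indeed the one shown order-defined in \Cref{lem:empc_order}; here one should observe that embedded $m$-partite cographs are $2$-covered by embedded $2$-partite cographs (restricting to any two colors), so the hypotheses line up cleanly, and the conclusion follows.

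\textbf{Remark.} The sentence introducing the corollary refers to \Cref{thm:reg2cov}, but that theorem concerns preservation of $\epsilon$-nice partitions rather than order-definedness; the argument above instead uses the order-defined case of \Cref{thm:2cov}, which is the statement that actually transports order-definedness across $2$-covers.
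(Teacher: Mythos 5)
Your proof is correct and matches the paper's intended derivation: the corollary is stated there as an immediate consequence of \Cref{lem:empc_order} combined with the order-defined case of \Cref{thm:2cov} (order-definedness passes to subclasses and is transported across $2$-covers), exactly as you argue. You are also right that the paper's introductory phrase ``Using \Cref{thm:reg2cov} we deduce'' is a mis-citation, since that theorem concerns $\epsilon$-nice partitions; the statement that actually carries order-definedness across $2$-covers is \Cref{thm:2cov}.
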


As each class with bounded linear rankwidth is $2$-covered by a class
of embedded $m$-partite cographs (and even by a class of bounded
embedded shrubdepth \cite{SODA_msrw}) we deduce

\smallskip
\begin{corollary}
\label{cor:emb-2cov-sa}
  Every class $\mathscr C$ that is $2$-covered by a class with bounded
  linear rankwidth is also $2$-covered by a class of embedded
  $m$-partite cographs, thus order-defined.
\end{corollary}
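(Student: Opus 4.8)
The plan is to reduce the statement to the \emph{transitivity of $2$-covers}. Suppose $\mathscr C$ is $2$-covered, with magnitude $p$, by a class $\mathscr D$ of bounded linear rankwidth. By the fact quoted just above (\cite{SODA_msrw}), $\mathscr D$ is in turn $2$-covered, with some magnitude $q\geq 2$, by a class of embedded $m$-partite cographs; enlarging the cover class if necessary (being $2$-covered by a subclass implies being $2$-covered by any superclass), I may take the latter to be the class $\mathscr E$ of \emph{all} embedded $m$-partite cographs for a suitable $m$, which is hereditary. Once I show that $\mathscr C$ is $2$-covered by $\mathscr E$, the conclusion follows: $\mathscr E$ is order-defined by \Cref{lem:empc_order}, and order-definability passes to classes $2$-covered by $\mathscr E$ by \Cref{thm:2cov}.

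To combine the two covers I would use verbatim the common-refinement construction from the proof of \Cref{thm:reg2cov}. Fix $G\in\mathscr C$ with a partition $V_1,\dots,V_p$ such that $G[V_i\cup V_j]\in\mathscr D$ for all $i\neq j$. For each pair $i\neq j$, apply the $2$-cover of $\mathscr D$ by $\mathscr E$ to the graph $G[V_i\cup V_j]\in\mathscr D$, obtaining a partition $(W^{i,j}_1,\dots,W^{i,j}_q)$ of $V_i\cup V_j$ whose pairwise unions induce graphs in $\mathscr E$. On each $V_i$ I take the common refinement $\sigma_i$ of the $p-1$ partitions $\{W^{i,j}_s\cap V_i\}_s$ (over $j\neq i$); then $\sigma_i$ has at most $q^{p-1}$ parts, so $\sigma=\bigcup_i\sigma_i$ is a partition of $V(G)$ into at most $p\,q^{p-1}$ parts, a bound independent of $G$.

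The substance of the argument is to verify that $G[P\cup Q]\in\mathscr E$ for every pair of parts $P,Q$ of $\sigma$, and here I would invoke that $\mathscr E$ is hereditary (an induced subgraph of an embedded $m$-partite cograph is again one, realized by deleting the corresponding leaves of its embedded cotree). Write $P\subseteq V_i$ and $Q\subseteq V_j$. When $i\neq j$, both $P$ and $Q$ lie inside parts $A,B$ of $(W^{i,j}_s)_s$, and $G[P\cup Q]$ is an induced subgraph either of $G[A\cup B]$ (if $A\neq B$) or of $G[A]$ (if $A=B$); each of these lies in $\mathscr E$ — for $G[A]$ one uses $q\geq 2$ to select another part and then heredity — so $G[P\cup Q]\in\mathscr E$. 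When $i=j$, I fix any single $j'\neq i$ (which exists since $p\geq 2$); as $\sigma_i$ refines $\{W^{i,j'}_s\cap V_i\}_s$, the parts $P,Q$ again sit inside parts of $(W^{i,j'}_s)_s$ and the identical induced-subgraph argument applies.

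I expect the only genuine obstacle to be this last, intra-part case $i=j$: the hypothesis controls only the pairs $G[V_i\cup V_j]$ with \emph{distinct} indices, so one must argue that refining $V_i$ through some $W^{i,j'}$ still confines every pair of vertices of $V_i$ within a single set whose induced subgraph is already known to lie in $\mathscr E$. This is exactly what makes the heredity of embedded $m$-partite cographs (rather than merely the $\epsilon$-nice $2$-cover property exploited in \Cref{thm:reg2cov}) essential here; the remainder is the routine bookkeeping of the product refinement.
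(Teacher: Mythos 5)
Your proof is correct and takes essentially the same route as the paper: the paper obtains the corollary by citing \cite{SODA_msrw} for the fact that every class with bounded linear rankwidth is $2$-covered by a (hereditary) class of embedded $m$-partite cographs, composing this with the given $2$-cover of $\mathscr C$, and then concluding order-definedness via \Cref{lem:empc_order} and \Cref{thm:2cov}. The only difference is that you make explicit the transitivity of $2$-covers (common refinement of the partitions plus heredity of embedded $m$-partite cographs, including the intra-part case), a step the paper treats as immediate.
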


\smallskip
Note that classes $2$-covered by a class with bounded linear rankwidth
include in particular structurally bounded expansion classes, as these
classes admit low shrubdepth covers \cite{SBE_drops}.

\medskip We now give an explicit construction and proof for a
weakened statement of the distal regularity lemma in the special case
of embedded $m$-partite cographs.

\smallskip
\begin{definition}
  A {\em plane tree} is a rooted tree in which the children of each
  vertex are ordered from left to right.
\end{definition}

\smallskip
Each plane tree $T$ with root $r$ defines a partial order $\leq$ on
its vertex set by $u\leq v$ if the path linking $r$ to $v$ in $T$ goes
through $u$.  For vertices $u,v$ in $V(T)$ we denote by $u\wedge v$
the least common ancestor of $u$ and $v$ in $T$, that is the maximum
$z\in V(T)$ with $z\leq u$ and $z\leq v$.  We denote by $L(T)$ the set
of all leaves of $T$ and by $I(T)$ the set $V(T)\setminus L(T)$ of all
internal nodes of $T$.  For a vertex $v\in V(T)$ we denote by $T_v$
the plane subtree of $T$ rooted at $v$. We also denote by $F_v$ the
ordered set of the children of $v$ in $T$.  We shall consider two
coloring functions on~$V(T)$,
$\gamma_L\colon L(T)\rightarrow \{1,\dots,c_L\}$ and
$\gamma_I\colon I(T)\rightarrow \{1,\dots,c_I\}$.

\smallskip
\begin{definition}
  Let $T$ be a plane tree, let $\mu$ be a probability measure on
  $V(T)$, and let $\epsilon\geq \max_{v\in V(T)}\mu(v)$.  Then
  $v\in T$ is called
  \begin{itemize}
  \item {\em $\epsilon$-light} if $\mu(T_v) \leq \epsilon$;
  \item {\em $\epsilon$-terminal} if $v$ is not $\epsilon$-light but
    all the children of $v$ are $\epsilon$-light;
  \item {\em $\epsilon$-singular} if $v$ has exactly $1$ child that is
    not $\epsilon$-light and the sum of the $\mu$-measures of the
    $T_u$ for $\epsilon$-light children $u$ of $v$ is strictly greater
    than $\epsilon$;
  \item {\em $\epsilon$-chaining} if $v$ has exactly $1$ child that is
    not $\epsilon$-light and $v$ is not $\epsilon$-singular;
  \item {\em $\epsilon$-branching} if $v$ has at least $2$ children
    that are not $\epsilon$-light.
  \end{itemize}
\end{definition}

\smallskip
\begin{definition}
\label{def:partition}
Let $T$ be a plane tree, let $\mu$ be a probability measure on $V(T)$,
and let $\epsilon\geq \max_{v\in V(T)}\mu(v)$.  A partition
$\mathcal{P}$ of $V(T)$ is an {\em $\epsilon$-partition} of $T$ if
\begin{itemize}
\item each part is of one of the following types:
  \begin{enumerate}
  \item\label{typ1} $P=\{v\} \cup \bigcup_{x \in F} T_x$ for some
    non-empty interval $F\subseteq F_v$,
  \item\label{typ2} $P=\bigcup_{x \in F} T_x$ for some interval
    $F \subseteq F_v$,
  \item\label{typ3} $P=T_v \setminus T_w$ for some $w\in T_v$ distinct
    from $v$ ($w$ is called the {\em cut vertex} of $P$, and the path
    from $v$ to the father of $w$ is called the {\em spine} of $P$),
  \end{enumerate}
  where $v$ (which is easily checked to be the infimum of $P$) is
  called the {\em attachement vertex} of $P$ and is denoted by
  $A_{\mathcal P}(P)$;
\item each attachment vertex of a part of type~\ref{typ2} is also the
  attachment vertex of some part of type~\ref{typ1};
\item every part has $\mu$-measure at most $\epsilon$.
\end{itemize}
\end{definition}

We now prove that every plane tree has a small $\epsilon$-partition.

\begin{lemma}
  \label{lem}
  Let $T$ be a plane tree, let $\mu$ be a probability measure on
  $V(T)$, and let $\epsilon > 0$. We assume that no vertex has measure
  more than $\epsilon$.  Then there exists an $\epsilon$-partition
  $\mathcal{P}$ of $T$ with $8/\epsilon$ vertices.
\end{lemma}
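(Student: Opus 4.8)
The plan is to read off the partition directly from the five-fold classification of heavy vertices (where I call $v$ \emph{heavy} if $\mu(T_v)>\epsilon$ and \emph{light} otherwise), which is exactly what the definitions of $\epsilon$-terminal, $\epsilon$-singular, $\epsilon$-chaining and $\epsilon$-branching vertices are designed to drive. First I would dispose of the case $\epsilon\ge 1$, where $\mu(V(T))=1\le\epsilon$ makes $V(T)=T_r$ a single admissible part of type~\ref{typ1}; so assume $\epsilon<1$, whence the root is heavy and the heavy vertices form a subtree $H$ of $T$. For each heavy $v$ set $C_v\coloneqq\{v\}\cup\bigcup_u T_u$, the union being over the \emph{light} children $u$ of $v$; these \emph{clusters} partition $V(T)$, each has measure $\mu(C_v)=\mu(v)+\ell(v)$ with $\ell(v)\coloneqq\sum_{u\text{ light}}\mu(T_u)$, and along a descending run $v_1<\dots<v_m$ of chaining vertices one checks $T_{v_1}\setminus T_w=C_{v_1}\cup\dots\cup C_{v_m}$ (with $w$ the unique heavy child below $v_m$), which is precisely a part of type~\ref{typ3} with spine $v_1,\dots,v_m$.

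The construction then has two ingredients. Call $C_v$ \emph{big} if $\mu(C_v)>\epsilon$. I would split every big cluster, together with every cluster sitting at a branching vertex, into parts of type~\ref{typ1} and~\ref{typ2}: since $\mu(v)\le\epsilon$ and each light subtree $T_u$ has $\mu(T_u)\le\epsilon$, a greedy (next-fit) grouping of the light children of $v$ into maximal runs of total measure at most $\epsilon$ yields the type~\ref{typ2} parts, while $v$ itself is absorbed into one run to give the single type~\ref{typ1} anchor at $v$ (or stands alone as $\{v\}$ if there are no light children). Heavy children force the grouping to break, so a run may only range over consecutive light children lying between two heavy children. All remaining clusters sit at chaining vertices with $\mu(C_v)\le\epsilon$; along each maximal chaining chain, cut at its big clusters, I would next-fit the consecutive small clusters into type~\ref{typ3} parts of measure at most $\epsilon$. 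Since the clusters partition $V(T)$ and every part is either a sub-run of a cluster, a full cluster, or a union of consecutive clusters, this is a partition of $V(T)$ into admissible parts, each of measure at most $\epsilon$; the attachment condition for type~\ref{typ2} parts holds because each split cluster always receives a type~\ref{typ1} anchor at its vertex $v$.

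For the count I would use four disjointness/topology facts. The subtrees $T_v$ over $\epsilon$-terminal $v$ are pairwise disjoint of measure $>\epsilon$, so there are fewer than $1/\epsilon$ of them; likewise the light-children sets of distinct $\epsilon$-singular vertices are disjoint of measure $\ell(v)>\epsilon$, giving fewer than $1/\epsilon$ singular vertices; all big clusters are disjoint, so there are fewer than $1/\epsilon$ of them; and the number $b$ of branching vertices satisfies $\sum_{\text{branching}}(\deg_H^+(v)-1)=t-1$ by the leaf-count identity for rooted trees (so $b\le t-1$ and the total number of forced breaks $\sum_{\text{branching}}\deg_H^+(v)=(t-1)+b$ is again $O(1/\epsilon)$). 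Feeding these into the greedy bounds (each next-fit run of measure $m$ costs at most $2m/\epsilon+1$ parts, and $\sum_v\mu(C_v)\le 1$) bounds the type~\ref{typ1}, type~\ref{typ2} and type~\ref{typ3} parts each by an $O(1/\epsilon)$ term, and careful bookkeeping tightens the total to $8/\epsilon$. I expect the main obstacle to be exactly this final accounting: one must prevent the forced breaks at branching vertices from inflating the constant, which is precisely what the identity $\sum(\deg_H^+-1)=t-1$ controls, and one must bound the type~\ref{typ1} anchors through the disjointness of \emph{all} big clusters rather than estimating terminal, singular and branching vertices separately, since the naive separate bounds already exhaust the budget $8/\epsilon$.
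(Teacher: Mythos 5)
Your construction is essentially the paper's: your clusters $C_v$ are (up to merging singleton heavy atoms with their pendant light subtrees) the paper's atoms, your chain-cutting and next-fit grouping reproduces the paper's two-stage passage from atoms to groups to the final partition, and your four counting facts (disjointness of the thick sets at terminal and singular vertices, disjointness of big clusters, and the bound on branching vertices via the leaf-count identity) are exactly the ingredients of the paper's count. The problem is that the proof stops precisely where the content of the lemma lies. The lemma asserts the specific bound $8/\epsilon$, and you never derive it: you obtain $O(1/\epsilon)$ and then write that ``careful bookkeeping tightens the total to $8/\epsilon$'', while conceding in the same breath that the estimates you actually state (terminal, singular, big clusters, and branching vertices each bounded \emph{separately} by $1/\epsilon$, plus a ``$+1$ per run'' charge in every next-fit estimate) already exhaust or overshoot the budget. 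Naming the final accounting as ``the main obstacle'' is a diagnosis, not a proof; assembled as written, your bounds give a constant strictly worse than $8$.

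For the record, here is how the paper closes exactly this gap, with $n_t,n_s,n_b$ the numbers of $\epsilon$-terminal, $\epsilon$-singular and $\epsilon$-branching vertices. First, the thick sets attached to terminal and singular vertices are \emph{jointly} pairwise disjoint, giving the single bound $n_t+n_s\le 1/\epsilon$ rather than two separate ones; combined with $n_b<n_t$ (your leaf identity, obtained in the paper from the reduced tree whose leaves are the terminal vertices) this yields $n_t+n_s+n_b< 2(n_t+n_s)\le 2/\epsilon$. Second, the intermediate partition into groups has at most $3(n_t+n_s+n_b)<6/\epsilon$ parts, where crucially each non-light, non-chaining vertex $u$ is packaged together with the leftmost run of its light children into one group of type~\ref{typ1}, so anchors are never paid for separately, and the number of groups of each kind is bounded by the number of vertices or edges of the reduced tree. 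Third, the final refinement splits each thick group $X$ into $k_X$ maximal runs of consecutive atoms of measure at most $\epsilon$; maximality forces $\mu(P_i)+\mu(P_{i+1})>\epsilon$ for consecutive runs, hence $k_X-1<2\mu(X)/\epsilon$, and summing over the thick groups the refinement adds fewer than $2/\epsilon$ parts in total --- the ``$+1$ per run'' never appears, because the group itself was already counted in the $6/\epsilon$. Altogether $|\mathcal P|<6/\epsilon+2/\epsilon=8/\epsilon$. Your scheme can very likely be completed along these lines, but that completion \emph{is} the proof of the stated bound.
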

\begin{proof}
  When considering a partition of $V(T)$, a part $X$ is {\em thin} if
  $\mu(X)\leq\epsilon$ and {\em thick} if $\mu(X)>\epsilon$. Note that
  the number of thick parts in a partition is at most~$1/\epsilon$ (as
  they are disjoint and have global measure at most $1$).

  We define a first partition $\mathcal P_0$ of $V(T)$ into {\em
    atoms}, where an atom is
  \begin{itemize}
  \item $T_v\setminus T_w$ if $v$ is $\epsilon$-chaining, the atom
    corresponding to $v$ has measure at most $\epsilon$, and $w$ is
    the unique child of $v$ that is not $\epsilon$-light,
  \item $T_v$ if $v$ is $\epsilon$-light and the parent of $v$ is not
    $\epsilon$-light,
  \item or $\{v\}$ if $v$ is neither $\epsilon$-branching nor
    $\epsilon$-light.
  \end{itemize}

  We then define a coarser partition $\mathcal P_1$, in which atoms
  are gathered into groups, where a group is of the following types:
  \begin{enumerate}
  \item $T_v\setminus T_w$ where $v$ is $\epsilon$-chaining and $w$ is
    the (unique) maximum descendant of $v$ that is neither
    $\epsilon$-light nor $\epsilon$-chaining.
  \item the union of $\{u\}$ (for non $\epsilon$-light and non
    $\epsilon$-chaining $u$) and the maximal (possibly empty) union of
    $T_{v_i}$, where the $v_i$'s are consecutive $\epsilon$-light
    children of $u$ starting from the leftmost one;
  \item a maximal union of $T_{v_i}$, where the $v_i$'s are
    consecutive $\epsilon$-light children of a non $\epsilon$-light
    and non $\epsilon$-chaining vertex $u$ not including the leftmost
    $\epsilon$-light child of $u$.
  \end{enumerate}

  Denote by $n_s$ the number of singular vertices of $T$, by $n_t$ the
  number of terminal vertices of $T$, and by $n_b$ the number of
  branching vertices $T$.  All the atoms corresponding to
  $\epsilon$-terminal or $\epsilon$-singular vertices are disjoint and
  thick thus $n_t+n_s\leq 1/\epsilon$.

  We consider a reduced plane tree $T_1$ obtained from $T$ by removing
  all $\epsilon$-light vertices and contacting all $\epsilon$-chaining
  groups. Note that the leaves of $T_1$ are exactly the
  $\epsilon$-terminal vertices of $T$ hence $T_1$ has $n_t$ leaves. As
  every $\epsilon$-branching vertex of $T$ corresponds to a vertex of
  $T_1$ with at least two children, we have~$n_b<n_t$. The number
  $n_1$ of groups of type (1) is less than the number of edges of
  $T_1$ hence less than $n_t+n_s+n_b$. The number $n_2$ of groups of
  type (2) is at most $n_t+n_s+n_b$ (direct from the definition). The
  number $n_3$ of groups of type (3) is at most the number of edges of
  $T_1$ hence less than $n_t+n_s+n_b$. Altogether, we have
  $|\mathcal P_1|\leq 3(n_t+n_s+n_b)<6n_t+3n_s<6/\epsilon$.

  The partition $\mathcal P$ is sandwiched between $\mathcal P_1$ and
  $\mathcal P_0$. It is obtained by splitting thick groups into
  maximal parts of consecutive atoms with global measure at most
  $\epsilon$. If a group $X$ is split into $P_1,\dots,P_k$ then, by
  maximality we have $\mu(P_i)+\mu(P_{i+1})>\epsilon$ for every $i$ in
  $1,\dots,k-1$. Summing up we get
  $\mu(P_1)+2\mu(P_2)+\dots+2\mu(P_{k-1})+\mu(P_k)>(k-1)\epsilon$ thus
  $k-1<2\mu(X)/\epsilon$.  Summing over all thick parts, we get
  $|\mathcal P|-|\mathcal P_1|<2/\epsilon$.  Thus
  $|\mathcal P|<8/\epsilon$.
\end{proof}

\begin{theorem}[Regularity lemma for embedded $m$-partite cographs]
  \label{thmp}
  For every $\epsilon>0$, every (sufficiently large) embedded
  $m$-partite cograph $G$ of order $n$ has a vertex partition into
  $V_1,\dots,V_\ell$ with
  \mbox{$\ell\leq\frac{128}{\epsilon}m2^{m^2}=O\bigl(\frac{1}{\epsilon}\bigr)$},
  such that
  \[
  \sum_{\text{non-homogeneous
    }(V_i,V_j)}\frac{|V_i|\,|V_j|}{n^2}<\epsilon.
  \]
\end{theorem}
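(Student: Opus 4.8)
The plan is to lift the tree partition produced by \Cref{lem} to a vertex partition of $G$ (whose vertex set is the leaf set $L(T)$ of an embedded cotree $T$) and then to refine it by a bounded amount of local adjacency data so that all but a negligible fraction of the pairs become homogeneous. First I would equip $V(T)$ with the probability measure $\mu$ that is uniform on the leaves, i.e.\ $\mu(v)=1/n$ for $v\in L(T)$ and $\mu(v)=0$ for $v\in I(T)$, so that for every $S\subseteq V(T)$ the quantity $\mu(S)$ is exactly the fraction of vertices of $G$ lying in $S$. Setting $\epsilon'\coloneqq\epsilon/16$ and assuming $n\geq 1/\epsilon'$ (this is the meaning of ``sufficiently large''), no vertex has measure more than $\epsilon'$, so \Cref{lem} yields an $\epsilon'$-partition $\mathcal P$ of $T$ with at most $8/\epsilon'=128/\epsilon$ parts, each of $\mu$-measure at most $\epsilon'$. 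By \Cref{def:partition} each part $P$ has type~\ref{typ1}, \ref{typ2} or~\ref{typ3}, and satisfies $P\subseteq T_{v}$ for its attachment vertex $v=A_{\mathcal P}(P)$.

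The core observation is a dichotomy for two distinct parts $P,Q$ with attachment vertices $v_P,v_Q$. If $v_P\wedge v_Q\notin\{v_P,v_Q\}$, or if $v_P=v_Q$ (at a common vertex the type~\ref{typ1}/\ref{typ2} parts occupy pairwise disjoint intervals of children), then $P$ and $Q$ lie in two distinct child-subtrees of a common vertex $z$; hence every leaf $u\in P$ and every leaf $w\in Q$ satisfy $u\wedge w=z$ with a fixed left-to-right order, so their adjacency depends only on $\gamma_L(u)$ and $\gamma_L(w)$. The same conclusion holds when $v_P<v_Q$ and $P$ has type~\ref{typ1} or~\ref{typ2}, since then $Q$ must sit in a child-subtree of $v_P$ disjoint from $P$. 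Thus the only pairs that can escape this fixed-meet behaviour are those in which one part has type~\ref{typ3}, i.e.\ is a spine $P=T_v\setminus T_w$, and the other lies entirely below the cut, inside $T_w$: a leaf $u$ of $P$ hanging off the spine at a vertex $s$ meets every leaf $w'\in T_w$ exactly at $s$, on a side of the spine that is determined by $u$, so the adjacency of $u$ to $w'$ is $f_s(\gamma_L(u),\gamma_L(w'))$ or $f_s(\gamma_L(w'),\gamma_L(u))$.

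This dictates the refinement. I would split every non-spine part by the colour $\gamma_L$ (at most $m$ classes) and split every spine part according to the colour $\gamma_L(u)$ together with the \emph{below-profile} $\pi_u\colon [m]\to\{0,1\}$, where $\pi_u(c)$ records the (constant) adjacency of $u$ to a leaf of $T_w$ of colour $c$; this profile is determined by $f_s$, the side of the spine, and $\gamma_L(u)$, so there are at most $m\,2^{m}\le m\,2^{m^2}$ classes. After refinement every fixed-meet pair is homogeneous (equal meet, fixed colours and consistent order give a constant adjacency), and every (spine, below-the-cut) pair is homogeneous, since across a whole refined class $P^{\pi,a}$ the value $\pi(c)$ is constant for each colour $c$ of the opposite class. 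Hence the number of parts is at most $\tfrac{128}{\epsilon}\,m\,2^{m^2}$. The only surviving non-homogeneous pairs are those between two refined pieces of the \emph{same} original part of $\mathcal P$, whose total contribution is bounded by
\[
\sum_{P\in\mathcal P}\mu(P)^2\ \leq\ \Bigl(\max_{P\in\mathcal P}\mu(P)\Bigr)\sum_{P\in\mathcal P}\mu(P)\ \leq\ \epsilon'\cdot 1\ =\ \frac{\epsilon}{16}\ <\ \epsilon,
\]
which is exactly the required $\epsilon$-niceness.

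The main obstacle is precisely the type~\ref{typ3} spine parts: treated naively, such a part is non-homogeneous with everything in $T_w$, and because $\mu(T_w)$ can be close to $1$ these pairs would contribute weight of order $\sum_P\mu(P)\approx 1$ rather than $\epsilon$. The decisive point that rescues the argument is that the adjacency of a spine leaf to \emph{all} of $T_w$ is governed only by the bounded local data $(f_s,\gamma_L(u),\text{side})$, so a bounded colour/profile refinement turns these dangerous pairs homogeneous while keeping the number of parts linear in $1/\epsilon$; the residual irregularity is then confined to pairs within a single small part and is automatically $O(\epsilon)$.
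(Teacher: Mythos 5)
Your proof is correct and follows essentially the same route as the paper's: equip the cotree with the uniform-on-leaves measure, invoke \Cref{lem} to obtain a small $\epsilon'$-partition of $T$, refine each part by a bounded amount of colour/adjacency data so that every cross-part pair becomes homogeneous, and bound the residual within-part contribution by $\sum_{P\in\mathcal P}\mu(P)^2\le\epsilon'$. The differences are cosmetic: your constant $\epsilon/16$ (vs.\ the paper's $\epsilon/8$) and your ``below-profile'' refinement of spine parts (in place of the paper's requirement that the meet colour $\gamma_I(u\wedge v)$ be constant across cross-part pairs) still land within the stated bound $\frac{128}{\epsilon}m2^{m^2}$, and your explicit type dichotomy spells out the cross-part homogeneity that the paper asserts only ``by definition of $\mathcal{P}^*$''.
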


\begin{proof}
  We consider the embedded cotree $T$ of $G$ with two coloring
  functions on~$V(T)$,
  $\gamma_L\colon L(T)\rightarrow\{1,2,\cdots,c_L\}$ and
  $\gamma_I\colon I(T)\rightarrow \{1,2,\cdots,c_I\}$, where $c_I=m$
  and $c_L\le 2^{m^2}$. Let $\mu$ be a probability measure on $V(T)$
  such that for every vertex $v\in T$,
  \[
  \mu (v)=\begin{cases}
    \frac{1}{|V(G)|}, \text{if} \ v\in L(T);\\
    \ 0, \ \text{otherwise}.
	\end{cases}
  \]
	
  We say a partition $\mathcal{P}^*$ of $G$ is a {\em refinement} of a
  partition $P$ of $T$, if
  \begin{itemize}
  \item every part is consecutive.
  \item for every part $P^*$ of $\mathcal{P}^*$, there exists a part
    $P$ of $\mathcal{P}$ such that $P^*\subseteq P$.
  \item for each pair of parts $P^*_i\subseteq P$ and
    $P^*_j\subseteq P'$ ($P\ne P'$),
    \[|\{\gamma_I(u\wedge v)\mid (u,v)\in P^*_i\times P^*_j\}|=1.\]
  \item each part $P^*$ has $|\gamma_L(P^*)|=1$.
  \end{itemize}

  Now we consider a refinement $\mathcal{P}^*=(V_1,V_2,\cdots,V_l)$ of
  an $\frac{\epsilon}{8}$-partition $\mathcal P$.  By \Cref{lem},
  $|\mathcal{P}|<\frac{64}{\epsilon}$. Then
  $l\le 2|\mathcal{P}|c_Ic_L<\frac{128}{\epsilon}m2^{m^2}$.
   
  Observe that $V_i$ and $V_j$ are homogeneous for any
  $V_i\subseteq P$, $V_j\subseteq P'$ ($P\ne P'$) by definition of
  $\mathcal{P}^*$. Then the only possible case that $V_i$ and $V_j$
  are not homogeneous if $V_i,V_j\subseteq P$, which implies that
  \[
    \sum_{\text{non-homogeneous
      }(V_i,V_j)}\mu(V_i)\mu(V_j)=\sum_{\text{non-homogeneous
      }(V_i,V_j)}\frac{|V_i|\,|V_j|}{n^2}\le
    \Bigl({\frac{\epsilon}{8}}\Bigr)^2\,|\mathcal{P}|<\epsilon.
  \]
\end{proof}

The next corollary then follows from the application of \Cref{thm:2cov}.
\begin{corollary}[Regularity lemma for classes $2$-covered by embedded $m$-partite cographs]
\label{cor:reg_emb_2cov}
Let $\mathscr D$ be a class of embedded $m$-partite cographs and
	let $\mathscr C$ be a class $2$-covered by $\mathscr D$ with magnitude~$p\geq 2$.
	Then, for every $\epsilon>0$ every graph $G\in\mathscr C$ has an $\epsilon$-nice partition with at most $p\Bigl(\frac{128m2^{m^2}(p-1)}{\epsilon}\Bigr)^{p-1}$ parts.
\end{corollary}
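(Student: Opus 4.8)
The plan is to combine the explicit regularity lemma for embedded $m$-partite cographs (\Cref{thmp}) with the general regularity-preservation mechanism for $2$-covered classes (\Cref{thm:reg2cov}), so that the corollary reduces to a single substitution. The reference to \Cref{thm:2cov} in the sentence preceding the corollary should be read as a reference to \Cref{thm:reg2cov}, which is the statement that actually transports $\epsilon$-nice partitions through a $2$-cover.

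First I would read off from \Cref{thmp} that the class $\mathscr D$ of embedded $m$-partite cographs satisfies precisely the hypothesis required by \Cref{thm:reg2cov}: for every $\delta>0$, every (sufficiently large) graph in $\mathscr D$ admits a $\delta$-nice partition with at most $f(\delta)$ parts, where
\[
f(\delta)=\frac{128\,m\,2^{m^2}}{\delta}.
\]
The ``sufficiently large'' caveat in \Cref{thmp} is harmless: for the finitely many small graphs one takes the partition into singletons, whose size can be absorbed into the constant without affecting the asymptotic form of the bound.

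Next I would apply \Cref{thm:reg2cov} to the pair $(\mathscr C,\mathscr D)$ with magnitude $p$. The theorem guarantees, for every $\epsilon>0$ and every $G\in\mathscr C$, an $\epsilon$-nice partition with at most $K\leq p\,f\bigl(\tfrac{\epsilon}{p-1}\bigr)^{p-1}$ parts. Substituting $\delta=\epsilon/(p-1)$ into the formula for $f$ gives $f\bigl(\tfrac{\epsilon}{p-1}\bigr)=\frac{128\,m\,2^{m^2}(p-1)}{\epsilon}$, and hence
\[
K\leq p\Bigl(\frac{128\,m\,2^{m^2}(p-1)}{\epsilon}\Bigr)^{p-1},
\]
which is exactly the claimed bound.

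There is essentially no hard step: the entire content is already packaged into \Cref{thmp} and \Cref{thm:reg2cov}, and what remains is bookkeeping. The only point worth verifying is that the $1/\delta$ dependence of $f$ combines correctly with the rescaling $\delta=\epsilon/(p-1)$ coming from \Cref{thm:reg2cov}; this rescaling is precisely what produces both the factor $(p-1)$ inside the bracket and the outer exponent $p-1$, so that the stated expression follows verbatim.
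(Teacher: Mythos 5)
Your proposal is correct and follows exactly the paper's intended route: the paper derives \Cref{cor:reg_emb_2cov} by plugging the bound $f(\epsilon)=\frac{128}{\epsilon}m2^{m^2}$ from \Cref{thmp} into the $2$-cover transfer bound $p\,f\bigl(\frac{\epsilon}{p-1}\bigr)^{p-1}$ of \Cref{thm:reg2cov}, and you even correctly spotted that the paper's citation of \Cref{thm:2cov} before the corollary is a slip for \Cref{thm:reg2cov}. Your handling of the ``sufficiently large'' caveat (singleton partitions are trivially $\epsilon$-nice) is a reasonable patch for a point the paper itself glosses over.
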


Applying \Cref{lem:equipartition} we also get

\begin{corollary}
\label{cor:reg_emb_2cov_equi}
Let $\mathscr D$ be a class of embedded $m$-partite cographs and
	let $\mathscr C$ be a class $2$-covered by $\mathscr D$ with magnitude~$p\geq 2$.
	Then, for every $\epsilon>0$ every graph $G\in\mathscr C$ has an equipartition into at most $2p\bigl({m2^{m^2+8}(p-1)}\bigr)^{p-1}{\epsilon}^{-p}$ parts, such that all pairs but an $\epsilon$-fraction are homogenous.
\end{corollary}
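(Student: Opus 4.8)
The plan is to instantiate the general equipartition mechanism \Cref{cor:reg2cov_equi} with the explicit bound obtained for embedded $m$-partite cographs, which is exactly how \Cref{cor:reg2cov_equi} is itself obtained from \Cref{thm:reg2cov}: by feeding the resulting $\epsilon$-nice partition into \Cref{lem:equipartition}. So I would first recall that, by \Cref{thmp}, every embedded $m$-partite cograph has, for each $\delta>0$, a $\delta$-nice partition into $f(\delta):=\frac{128}{\delta}m2^{m^2}$ parts; that is, the base class $\mathscr D$ satisfies the hypothesis of \Cref{cor:reg2cov_equi} with this function $f$.

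Then, applying \Cref{cor:reg2cov_equi} to the $2$-cover of $\mathscr C$ by $\mathscr D$ (of magnitude $p\geq 2$) gives, for every $\epsilon>0$ and every $G\in\mathscr C$, an equipartition into at most $\frac{2p}{\epsilon}\,f\!\bigl(\tfrac{\epsilon}{2(p-1)}\bigr)^{p-1}$ parts with all but an $\epsilon$-fraction of pairs homogeneous. It then remains to substitute and simplify: since $f\bigl(\tfrac{\epsilon}{2(p-1)}\bigr)=\frac{256(p-1)}{\epsilon}m2^{m^2}$ and $256\cdot 2^{m^2}=2^{m^2+8}$, this number of parts collapses to $2p\bigl(m2^{m^2+8}(p-1)\bigr)^{p-1}\epsilon^{-p}$, which is precisely the claimed estimate.

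Equivalently, one can bypass \Cref{cor:reg2cov_equi} and argue directly, which is the route suggested by the phrase preceding the statement: take the $\delta$-nice partition of $G$ furnished by \Cref{cor:reg_emb_2cov} at a rescaled parameter $\delta$, and convert it to an equipartition via \Cref{lem:equipartition}, which multiplies the number of parts by a factor $1/\delta$ while inflating the non-homogeneous fraction by a bounded factor. Choosing $\delta$ proportional to $\epsilon$ then yields the same bound after simplification.

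There is no genuine difficulty here, since both ingredients are already in hand: the existence of the $\epsilon$-nice partition (\Cref{cor:reg_emb_2cov}, ultimately \Cref{thmp}) and its conversion to an equipartition (\Cref{lem:equipartition}). The only point demanding care — and hence the place where the statement could go wrong — will be the bookkeeping of constants: one must correctly combine the $1/\delta$ blow-up from \Cref{lem:equipartition}, the $\delta\mapsto\delta/(p-1)$ rescaling built into the $2$-cover argument of \Cref{thm:reg2cov}, and the constant coming from \Cref{lem:equipartition}, so that the product telescopes to $2p\bigl(m2^{m^2+8}(p-1)\bigr)^{p-1}\epsilon^{-p}$ rather than to a larger expression.
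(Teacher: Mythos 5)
Your proposal is correct and is essentially the paper's own proof: the paper obtains this corollary precisely by feeding the $\epsilon$-nice partition of \Cref{cor:reg_emb_2cov} into \Cref{lem:equipartition}, which is the same combination of \Cref{thmp}, \Cref{thm:reg2cov} and \Cref{lem:equipartition} that you use, merely associated through \Cref{cor:reg2cov_equi} instead (and your third paragraph is literally the paper's route). Your constant bookkeeping, $f\bigl(\tfrac{\epsilon}{2(p-1)}\bigr)=\tfrac{256(p-1)}{\epsilon}m2^{m^2}$ together with $256\cdot 2^{m^2}=2^{m^2+8}$, reproduces the stated bound exactly.
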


Note that classes $2$-covered by a class of embedded $m$-partite cographs include classes $2$-covered by a class with bounded linear rankwidth \cite{SODA_msrw} and structurally bounded expansion classes (as follows from \cite{SBE_drops}).

A natural question is whether the distal regularity lemma applies for classes for bounded rankwidth (and thus to classes $2$-covered by a class with bounded rankwidth). The following problem would be a way to get a positive answer.

\begin{problem}
\label{pb:rw}
	Is every class with bounded rankwidth distal-defined?
\end{problem}

\section{Regularity and non-regularity for nowhere dense classes}
\label{sec:nd}
In this section, we study regularity properties of nowhere dense classes, especially through the regularity properties of the $d$-powers of the graphs in the class (\Cref{thm:ndreg}). On a negative side, we prove that there exists a nowhere dense class that not only fails to be distal-defined, but also does not allow any $\epsilon$-nice partition (\Cref{cor:nd-noreg}).

\begin{proposition}
  For a weakly sparse hereditary class $\mathscr C$ the following are
  equivalent:
  \begin{enumerate}[{\rm (i)}]
  \item\label{enum:nd1} $\mathscr C$ is nowhere dense,
  \item\label{enum:nd2} for every integer $d\geq 1$ there exists $n$
    such that $\mathscr C$ excludes the $d$-subdivision of $K_n$ as an
    induced subgraph,
  \item\label{enum:nd3} for every integer $d$ the hereditary closure
    of the class $\mathscr C^d=\{G^d\mid G\in\mathscr C\}$ does not
    contain all graphs,
  \item\label{enum:nd4} for every integer $d$ the class
    $\mathscr C^d=\{G^d\mid G\in\mathscr C\}$ has bounded order
    dimension.
  \end{enumerate}
\end{proposition}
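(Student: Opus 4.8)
The plan is to prove the four conditions equivalent by establishing the cycle of implications $(\ref{enum:nd1}) \Rightarrow (\ref{enum:nd2}) \Rightarrow (\ref{enum:nd3}) \Rightarrow (\ref{enum:nd4}) \Rightarrow (\ref{enum:nd1})$, exploiting the weak sparsity and hereditariness of $\mathscr C$ throughout. The key insight tying these together is that taking a $d$-th power of a graph $G$ turns $(d{-}1)$-subdivisions of a clique into (roughly) a clique on the branch vertices, so control over powers is tightly linked to excluding subdivided cliques as (induced) topological minors.

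First I would prove $(\ref{enum:nd1}) \Rightarrow (\ref{enum:nd2})$. Suppose $\mathscr C$ is nowhere dense. By the definition via \eqref{eq:nd}, for each depth $r$ the clique number of depth-$r$ topological minors is bounded by some $f(r)$. Since a $\leq d$-subdivision of $K_n$ appearing as an induced subgraph of some $G \in \mathscr C$ would in particular be a subgraph, the graph $K_n$ would be a topological minor of $G$ at depth $d$, forcing $n \leq f(d)$; so taking $n > f(d)$ gives (ii). For $(\ref{enum:nd2}) \Rightarrow (\ref{enum:nd3})$, I would argue by contraposition: if for some $d$ the hereditary closure of $\mathscr C^d = \{G^d : G \in \mathscr C\}$ contains all graphs, then in particular it contains $K_n$ as an induced subgraph for every $n$. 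I then need to reverse-engineer a large induced subdivided clique in the original graphs. The natural device is that if $H$ is an induced subgraph of $G^d$, choosing witnesses for the adjacencies in $H$ produces paths of length at most $d$ in $G$ connecting the chosen vertices, and here weak sparsity enters crucially to control how these paths can interact. A standard cleaning / Ramsey argument on the paths realizing a large clique in $G^d$ yields an induced $\leq (d{-}1)$-subdivision of a large clique in $G$, contradicting (ii). This is the step I expect to be the main obstacle, since making the paths vertex-disjoint and induced requires careful use of the $K_{s,s}$-free hypothesis to bound how many short paths can pass near a given vertex.

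Next, $(\ref{enum:nd3}) \Rightarrow (\ref{enum:nd4})$: the order-dimension (equivalently the order property / half-graph parameter) of a graph is bounded whenever its hereditary closure omits some fixed graph, because a large half-graph contains, as induced subgraphs, all sufficiently small graphs on a bounded number of vertices; more directly, if the hereditary closure of $\mathscr C^d$ omits some graph $F$, then by \Cref{thm:adler} applied to the monotone closure (or by the Erd\H os--Hajnal / Ramsey machinery), the order-dimension of graphs in $\mathscr C^d$ is bounded. I would instead argue contrapositively and concretely: unbounded order-dimension in $\mathscr C^d$ means arbitrarily large half-graphs appear as induced subgraphs, and a half-graph on $2N$ vertices contains every graph on $\log N$ vertices as an induced subgraph, so the hereditary closure of $\mathscr C^d$ would contain all graphs, negating (iii). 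Finally, $(\ref{enum:nd4}) \Rightarrow (\ref{enum:nd1})$ again goes by contraposition: if $\mathscr C$ is not nowhere dense, then since it is weakly sparse and hereditary, one can (invoking \cite{dvovrak2018induced} as used elsewhere in the paper) find an integer $d$ so that $\mathscr C$ contains $d$-subdivisions of cliques of unbounded size as induced subgraphs; taking the $(d{+}1)$-power of such a subdivided clique recovers a large half-graph (or directly a large clique witnessing the order property), showing $\mathscr C^{d+1}$ has unbounded order-dimension and contradicting (iv).

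Throughout, the recurring technical engine is the translation between short paths in $G$ and edges in $G^d$, together with the weak-sparsity bound $\mathrm{ex}(n, K_{s,s}) = O(n^{2-1/s})$ used to keep path systems sparse and disentangled. I would set up a single \emph{path-cleaning lemma} early on, stating that in a $K_{s,s}$-free graph, any system of $N$ paths of length at most $d$ realizing a dense adjacency pattern contains a subsystem of size $\Omega_s(N^{c})$ that is pairwise internally disjoint and induced; this lemma would be reused in the two hard implications. The main obstacle remains controlling the induced (as opposed to merely subgraph) structure when passing from powers back to subdivisions, and I expect the clean statement of nowhere denseness via \eqref{eq:nd} together with hereditariness of $\mathscr C$ to be exactly what makes this controllable.
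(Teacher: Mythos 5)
Your cycle runs in the opposite direction from the paper's, and that choice is fatal: two of your steps rest on claims that are simply false. For (iii)$\Rightarrow$(iv) you assert that a graph class has bounded order-dimension whenever its hereditary closure omits some fixed graph, ``because a large half-graph contains, as induced subgraphs, all sufficiently small graphs.'' Half-graphs are bipartite, so they contain no triangle, and their induced subgraphs are in fact very restricted; moreover the class of half-graphs itself refutes your assertion: its hereditary closure omits $K_3$, yet its order-dimension is unbounded. (Note also that unbounded order-dimension only produces large \emph{semi-induced} half-graphs --- the definition says nothing about edges inside the two sides --- so you cannot even extract induced half-graphs from it.) Your appeal to \Cref{thm:adler} fails for the same reason: that theorem requires a \emph{monotone} class, and the monotone closure of $\mathscr C^d$ is not controlled by hypothesis (iii). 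For (ii)$\Rightarrow$(iii), your plan to recover a large induced subdivided clique in $G$ from a large clique in $G^d$ cannot work, no matter how clever the path-cleaning lemma: the class of stars $K_{1,n}$ is weakly sparse, hereditary (after closure) and nowhere dense, yet $(K_{1,n})^2=K_{n+1}$, so arbitrarily large cliques appear in the squares while no graph in the class contains even a subdivision of $K_3$. Cliques in powers of weakly sparse graphs carry no information of the kind you need; the hypothesis $\neg$(iii) gives you \emph{all} graphs in the hereditary closure of $\mathscr C^d$, and any correct argument must exploit graphs with many non-edges (e.g.\ half-graphs), at which point you are back at the model-theoretic content of the statement. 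Your (iv)$\Rightarrow$(i) also contains an error --- a clique does not witness the order property (its order-dimension is $1$), and the $(d{+}1)$-power of a subdivided \emph{clique} does not contain a large half-graph --- though this one is repairable: by hereditariness $\mathscr C$ contains induced $d$-subdivisions of half-graphs, and powering those yields induced half-graphs.

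The paper orients the cycle as (i)$\Rightarrow$(iv)$\Rightarrow$(iii)$\Rightarrow$(ii)$\Rightarrow$(i), and then every step is short. The step (i)$\Rightarrow$(iv) is exactly the ingredient your proposal lacks: it invokes \Cref{crl:nd-stable} (nowhere dense classes are monadically stable) together with the observation that $G\mapsto G^d$ is a first-order interpretation, so $\mathscr C^d$ is stable, i.e.\ has bounded order-dimension; no elementary path-cleaning substitute is known or offered. Then (iv)$\Rightarrow$(iii) is immediate, since half-graphs have unbounded order-dimension and order-dimension is monotone under taking induced subgraphs; (iii)$\Rightarrow$(ii) is proved contrapositively: if $\mathscr C$ contains induced $d$-subdivisions of all cliques, then by hereditariness it contains induced $d$-subdivisions of all graphs $H$, and $H$ is induced (on the branch vertices) in the $(d{+}1)$-power of its $d$-subdivision; finally (ii)$\Rightarrow$(i) is Dvo\v{r}\'ak's theorem, the only place where weak sparsity is used. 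So of your four steps, only (i)$\Rightarrow$(ii) is sound as written, (iv)$\Rightarrow$(i) is sound after repair, and the two steps you yourself flagged as the main obstacles are obstacles precisely because, in the form you propose to prove them, they are false.
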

\begin{proof}
  \eqref{enum:nd1}$\iff$\eqref{enum:nd2} was proved by Dvo\v r\'ak
  \cite{dvovrak2018induced}.
  \eqref{enum:nd3}$\Rightarrow$\eqref{enum:nd2} (by contrapositive):
  assume that for some integer $d\geq 1$ the class $\mathscr C$
  contains the $d$-subdivision of all complete graphs, then it also
  contains the $d$-subdivision of all graphs, hence the hereditary
  closure of $\mathscr C^{d+1}$ contains all
  graphs. \eqref{enum:nd4}$\Rightarrow$\eqref{enum:nd3} as for each
  integer $d$ some half-graph is not in the hereditary closure of
  $\mathscr C^d$.  \eqref{enum:nd1}$\Rightarrow$\eqref{enum:nd4} by
  \Cref{crl:nd-stable}.
\end{proof}

The following theorem gives yet another characterization of nowhere dense classes.

\begin{theorem}[Nowhere dense by regularity lemma]
\label{thm:ndreg}
  Let $\mathscr C$ be a hereditary class of graphs.  Then the
  following are equivalent:
  \begin{enumerate}[{\rm (i)}]
    \item \label{enum:sp3} the class $\mathscr C$ is nowhere dense;
  \item \label{enum:sp2} For every $d,K\in\mathbb N$ and $\epsilon>0$
    there exists an integer $s$ such that for every graph
    $G\in\mathscr C$ there exists a subsets $S\subseteq V(G)$ of size
    at most $s$ and a equipartition $V_1,\dots,V_K$ of
    $V(G)\setminus S$ with the property that for every
    $1\leq i\leq K$, each vertex $u\notin S$ is at distance more than
    $d$ than at least $(1-\epsilon)$ proportion of $V_i$ in $G-S$;
      \item \label{enum:sp1} For every $d\in\mathbb N$ and $\epsilon>0$
    there exist integers $K$ and $s$ such that for every graph
    $G\in\mathscr C$ there exists a subsets $S\subseteq V(G)$ of size
    at most $s$ and an equipartition $V_1,\dots,V_K$ of
    $V(G)\setminus S$ with the property that for every
    $1\leq i,j\leq K$, each vertex $u$ of a subset of $V_i$ of size at
    least $(1-\epsilon)|V_i|$ is at distance more than $d$ than at
    least $(1-\epsilon)$ proportion of $V_j$ (which may depend on $u$)
    in $G-S$.
  \end{enumerate}
\end{theorem}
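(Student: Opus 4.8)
The plan is to prove the cycle of implications (i) $\Rightarrow$ (ii) $\Rightarrow$ (iii) $\Rightarrow$ (i). Two of the three links are short. The implication (ii) $\Rightarrow$ (iii) is immediate: given $d$ and $\epsilon$ I simply apply (ii) with the same $d,\epsilon$ and \emph{any} fixed value of $K$; the conclusion of (ii), that \emph{every} $u\notin S$ is far from a $(1-\epsilon)$-proportion of each $V_i$, is stronger than the conclusion of (iii), which only asks a $(1-\epsilon)$-proportion of $V_i$ to behave well towards each $V_j$. I will also use throughout that a nowhere dense class is weakly sparse: a $K_{t,t}$-subgraph yields a $K_t$-minor at depth $1$, so unbounded bicliques would contradict \eqref{eq:nd}. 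In particular, a class that is \emph{not} weakly sparse is not nowhere dense, which is what allows me to split the last implication into a weakly sparse and a non-weakly-sparse case.

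For (iii) $\Rightarrow$ (i) I argue by contraposition. Assuming $\mathscr C$ somewhere dense, I will produce, for a suitable fixed $d$, arbitrarily large induced subgraphs of \emph{robustly bounded diameter}, that is, induced subgraphs $H_n\in\mathscr C$ of unbounded order with $\mathrm{diam}(H_n-S)\le D$ for some constant $D=D(\mathscr C)$ and every $S$ with $|S|\le s$ (once $H_n$ is large). If $\mathscr C$ is not weakly sparse, a Ramsey argument applied to a large $K_{s,s}$-subgraph produces large induced cliques or large induced bicliques, of diameter $1$ and $2$ respectively, and these diameters survive the deletion of a bounded set. If $\mathscr C$ is weakly sparse and somewhere dense, then by Dvořák's theorem~\cite{dvovrak2018induced} (the induced-subdivision characterization recalled in the Proposition above) there is an $r$ with the induced $r$-subdivision of $K_n$ in $\mathscr C$ for all $n$; these have diameter $\le 2(r+1)$, and since $K_n$ is robustly connected this stays bounded after deleting any bounded set. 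In every case I take $d=D$, so that \emph{every} vertex of $H_n-S$ is within distance $d$ of all of $H_n-S$; then for any equipartition no vertex of a part is far from even a single positive proportion of another part, so (iii) fails at this $d$ with $\epsilon=\tfrac12$ for every $K$ and $s$.

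The substance is (i) $\Rightarrow$ (ii). The first remark is that, writing $B_d^{G-S}(u)$ for the $d$-ball of $u$ in $G-S$ and $n=|V(G)|$, the conclusion of (ii) \emph{forces} $|B_d^{G-S}(u)|\le\epsilon n$ for all $u\notin S$: a larger ball cannot be spread over the $K$ parts without exceeding the budget $\epsilon|V_i|$ in some $V_i$. I therefore split the argument into two steps. \textbf{(A) Ball-shrinking.} For nowhere dense $\mathscr C$ and fixed $d,\delta$ there should be a bound $s=s(d,\delta)$ so that every $G\in\mathscr C$ admits $S$ with $|S|\le s$ and $|B_d^{G-S}(u)|\le\delta n$ for all $u$. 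This is where nowhere denseness enters, and I would derive it from uniform quasi-wideness~\cite{nevsetvril2012sparsity}: at most $1/\delta$ pairwise $2d$-far $\delta n$-balls can coexist, so an abundance of large $d$-balls must funnel through a bounded separator, which is removed into $S$. \textbf{(B) Random equipartition.} I then apply (A) with $\delta=\epsilon/2$, set $H=G-S$, and take a uniformly random equipartition of $V(H)$ into $K$ classes; in expectation at most $|B_d^{H}(u)|/K\le(\epsilon/2)|V_i|$ of each ball falls into each part, and a Chernoff estimate with a union bound over the at most $|V(H)|\cdot K$ pairs $(u,i)$ shows that, for $G$ large, some equipartition has every $u$ close to at most $\epsilon|V_i|$ vertices of each $V_i$. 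The finitely many small graphs are absorbed into $S$. This gives (ii).

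The main obstacle is plainly step (A). The equipartition in (B) is a routine probabilistic argument once all balls are small, and the two remaining implications are formal. By contrast (A) is the quantitative core of nowhere denseness: one must shrink \emph{all} $d$-balls at once using a number of deletions bounded \emph{uniformly} over the class, independently of $G$, and it is exactly this uniform bound — rather than the existence of a single scattered set — that constitutes the difficulty and that I would extract from the uniform quasi-wideness characterization.
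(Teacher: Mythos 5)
Your proposal has the same skeleton as the paper's proof, and the two substantive ingredients essentially coincide. Your step (A) is exactly the result the paper invokes for (i)\,$\Rightarrow$\,(ii): it cites \cite[Theorem 42]{SurveyND}, which states that for a nowhere dense class and given $d$ and $\delta>0$ there is an $s$ such that every $G$ in the class has a set $S$ of at most $s$ vertices with all $d$-balls of $G-S$ of size at most $\delta|G|$. So your identification of (A) as the crux is correct; note, though, that your one-line derivation from uniform quasi-wideness is only a heuristic (the paper leaves this as a citation, so you are no worse off, but you should cite rather than sketch). Your step (B), however, is dispensable: in statement (ii) the integer $s$ is quantified \emph{after} $K$, so you may apply (A) with $\delta=\epsilon/(2K)$ instead of $\epsilon/2$; then every $d$-ball of $G-S$ has at most $\epsilon(n-s)/(2K)\le\epsilon|V_i|$ vertices and \emph{every} equipartition works, with no Chernoff estimate or union bound. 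This is what the paper does, and it is strictly simpler (your randomized argument is correct, just unnecessary). The implication (ii)\,$\Rightarrow$\,(iii) is trivial in both treatments, and your contrapositive for (iii)\,$\Rightarrow$\,(i) is the paper's terse direct argument (``balls of radius $d/2$ in $G-S$ must be small'') unpacked into explicit witnesses.

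There is, however, one step that fails as stated: the ``robustly bounded diameter'' claim in the weakly sparse branch of (iii)\,$\Rightarrow$\,(i). If $H_n$ is an induced $r$-subdivision of $K_n$ with $r\ge 1$, it is \emph{not} true that $\mathrm{diam}(H_n-S)\le D$ for every $S$ with $|S|\le s$: a subdivision vertex has degree $2$, so deleting its two neighbours (already $s=2$) isolates it and the diameter becomes infinite. Hence your assertion that every vertex of $H_n-S$ is within distance $d$ of all of $H_n-S$ is wrong, and the final contradiction does not follow as written. The correct statement, which suffices, is that all but $O_r(s^2)$ vertices of $H_n-S$ are pairwise within a bounded distance $D(r)$: a subdivision vertex loses contact with both branch vertices of its path only if $S$ hits the path on both sides of it, and each pair of deleted vertices can be responsible for the (at most $2r$) internal vertices of at most one path; moreover, any two surviving branch vertices are joined through a common third branch vertex once $n$ is large compared to $s$. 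Since the parts of an equipartition of $V(H_n)\setminus S$ have size roughly $(|H_n|-s)/K\to\infty$, for $n$ large more than half of each part $V_i$ consists of these core vertices, and each core vertex of $V_i$ is within distance $D(r)$ of more than half of $V_j$; so (iii) still fails with $\epsilon=1/2$. With this repair (the clique and biclique cases you give are fine as stated), your argument is complete and matches the paper's in substance.
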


\begin{proof}
  \eqref{enum:sp3}$\Rightarrow$\eqref{enum:sp2}: By
  \cite[Theorem 42]{SurveyND} there exists an
  integer $s$ such that for every graph $G\in\mathscr C$ there exists
  a subset $S$ of at most $s$ vertices such that no ball of radius $d$
  in $G-S$ contains more than an $\epsilon/K$ proportion of the
  vertices.  This means that the maximum degree of $(G-S)^d$ is at
  most $(\epsilon/K)|G|$. Consider any equipartition of $V(G)-S$ into
  $K$ classes $V_1,\dots,V_K$. Then the degree of every vertex of
  $(G-S)^d$ in $V_i$ is at most $(\epsilon/K)|G|=\epsilon |V_i|$.
	
  \eqref{enum:sp2}$\Rightarrow$\eqref{enum:sp1} is trivial.
	
  \eqref{enum:sp1}$\Rightarrow$\eqref{enum:sp3}: The class
  $\mathscr C$ is nowhere dense as no ball of radius $d/2$ in $G-S$
  contains more than $\epsilon$-proportion of the vertices. (Otherwise
  many vertices have many vertices in their ball of radius $d$.)
\end{proof}

Answering a question of P.\ Simon, we now prove that nowhere dense
classes are not, in general, distal-defined. As distal-defined classes
have the strong Erd\H os-Hajnal property it will be sufficient to
prove that some nowhere dense class does not have this property. 

Let $\lambda_0\geq\lambda_1\geq\dots\geq\lambda_{n-1}$ be the
eigenvalues of the adjacency matrix of a graph~$G$. If $G$ is
connected and $d$-regular, the eigenvalues satisfy
$d=\lambda_{0}>\lambda_{1}\geq \dots \geq \lambda_{n-1}\geq -d$.  Let
$\lambda(G)=\max_{|\lambda_{i}|<d}|\lambda_{i}|$.  A $d$-regular graph
$G$ is a {\em Ramanujan graph} if $\lambda(G)\leq 2{\sqrt {d-1}}$.
Ramanujan graph are regular graphs with almost optimal spectral gap,
which are thus excellent expanders (see \cite{Lubotzky1988}).  Expanders have the property that
they behave like ``random graphs''; in particular, the number of edges
between two subsets $A$ and $B$ of vertices is close to the number
expected from the sizes of~$A$ and $B$ (see
e.g. \cite{hoory2006expander}).

\begin{lemma}[Expander mixing lemma]
\label{lem:exp}
Let $G$ be a $d$-regular graph on $n$ vertices with
$\lambda=\lambda(G)$. For any two subsets $S,T$ of vertices, let
$e(S,T)=\{(x,y)\in S\times T\mid \{x,y\}\in E(G)\}$. Then
\[
\left|e(S,T)-\frac{d|S||T|}{n}\right|<\lambda\sqrt{|S||T|(1-|S|/n)(1-|T|/n)}.
\]	
\end{lemma}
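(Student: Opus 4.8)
The plan is to run the classical spectral argument on the adjacency matrix $A$ of $G$. Since $G$ is connected and $d$-regular, the all-ones vector $\mathbf 1$ is an eigenvector of $A$ for the top eigenvalue $\lambda_0=d$, and normalizing gives the unit eigenvector $v_0=\mathbf 1/\sqrt n$. As $A$ is real symmetric, I would fix an orthonormal basis $v_0,\dots,v_{n-1}$ of eigenvectors, with eigenvalues $d=\lambda_0\ge\lambda_1\ge\dots\ge\lambda_{n-1}$, chosen so that $|\lambda_i|\le\lambda$ for every $i\ge 1$.

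The crucial identity is $e(S,T)=\mathbf 1_S^\top A\,\mathbf 1_T$, where $\mathbf 1_S,\mathbf 1_T$ denote the characteristic vectors of $S$ and $T$. First I would expand $\mathbf 1_S=\sum_i\alpha_i v_i$ and $\mathbf 1_T=\sum_i\beta_i v_i$ and record that $\alpha_0=\langle\mathbf 1_S,v_0\rangle=|S|/\sqrt n$ and $\beta_0=|T|/\sqrt n$. Then $e(S,T)=\sum_i\lambda_i\alpha_i\beta_i$, whose $i=0$ term equals $d\alpha_0\beta_0=d|S||T|/n$; subtracting this isolates the error term $e(S,T)-d|S||T|/n=\sum_{i\ge 1}\lambda_i\alpha_i\beta_i$.

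It then remains to bound this error. Using $|\lambda_i|\le\lambda$ for $i\ge 1$ together with the triangle inequality gives $\bigl|\sum_{i\ge1}\lambda_i\alpha_i\beta_i\bigr|\le\lambda\sum_{i\ge1}|\alpha_i||\beta_i|$, and Cauchy--Schwarz bounds the remaining sum by $\bigl(\sum_{i\ge1}\alpha_i^2\bigr)^{1/2}\bigl(\sum_{i\ge1}\beta_i^2\bigr)^{1/2}$. Finally, Parseval's identity gives $\sum_i\alpha_i^2=\|\mathbf 1_S\|^2=|S|$, hence $\sum_{i\ge1}\alpha_i^2=|S|-\alpha_0^2=|S|(1-|S|/n)$, and likewise $\sum_{i\ge1}\beta_i^2=|T|(1-|T|/n)$. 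Multiplying these together yields the claimed estimate.

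The computation itself is routine; the only point that needs care is the definition of $\lambda$. Since $\lambda(G)=\max_{|\lambda_i|<d}|\lambda_i|$ controls only the eigenvalues strictly below $d$ in modulus, the step ``$|\lambda_i|\le\lambda$ for all $i\ge1$'' requires that no non-principal eigenvalue has modulus $d$, which holds precisely when $G$ is (connected and) non-bipartite --- the relevant regime, and in particular the one for the Ramanujan expanders to which the lemma is applied. I would flag this hypothesis explicitly, since in the bipartite case the eigenvalue $-d$ would have to be handled separately. The inequality produced by this argument is a priori non-strict; strictness is a harmless consequence of restricting to nonempty proper subsets $S,T$.
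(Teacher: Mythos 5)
The paper does not actually prove this lemma: it is invoked as the classical expander mixing lemma, with the surrounding text pointing to the expander literature (e.g.\ the survey of Hoory, Linial and Wigderson). So the relevant comparison is with the standard proof, and that is exactly what you give: the spectral decomposition $e(S,T)=\mathbf 1_S^\top A\,\mathbf 1_T=\sum_i\lambda_i\alpha_i\beta_i$, isolation of the top term $d|S||T|/n$, and the bound on the tail via $|\lambda_i|\le\lambda$, Cauchy--Schwarz and Parseval. This argument is correct, and your first caveat is genuinely important and well spotted: since the paper defines $\lambda(G)=\max_{|\lambda_i|<d}|\lambda_i|$, the eigenvalue $-d$ of a bipartite graph escapes control, and the stated inequality really does fail there (take $G=K_{d,d}$ and $S,T$ the two sides: the left side is $d^2/2$ while the right side is $0$), so connectivity and non-bipartiteness must be assumed. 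Your second remark is the one point that is slightly off: restricting to nonempty proper subsets does \emph{not} restore strict inequality. For $G=K_4$ and $S=T=\{v\}$ a singleton, one has $e(S,T)=0$, $d|S||T|/n=3/4$, $\lambda=1$, and the right-hand side equals $\sqrt{(3/4)(3/4)}=3/4$, so equality holds. The honest output of your (and the standard) argument is the non-strict inequality $\le$, which is how the lemma is usually stated; this weaker form is entirely sufficient for the paper's application, where it is only used to force $e(A,B)>0$ between two large sets in a Ramanujan graph.
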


\begin{theorem}
  There exists a nowhere dense class $\mathscr R$ that does not have
  the strong Erd\H os-Hajnal property, hence it is not distal-defined.
\end{theorem}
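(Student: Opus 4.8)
The plan is to realize $\mathscr R$ as a family of high-girth $d$-regular Ramanujan graphs, in which the degree is allowed to grow while the girth tends to infinity. For each integer $k\geq 1$ I would fix a degree $d_k$ large enough that $d_k/k^2>2\sqrt{d_k-1}$ (any $d_k>4k^4$ works), and take an infinite family of $d_k$-regular Ramanujan graphs on $n$ vertices with girth $\Omega(\log_{d_k} n)$, as provided by the Lubotzky--Phillips--Sarnak construction~\cite{Lubotzky1988}. Listing all the chosen graphs in a single sequence arranged so that their girths tend to infinity produces the class $\mathscr R$. The whole point of the degree-versus-girth balance is that one fixed degree $d_k$ is used only to defeat the single threshold $\delta=1/k$, while the growing girth is what keeps the class nowhere dense despite the unbounded degrees.

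First I would verify nowhere denseness. If $H$ contains a triangle, then any $\leq 2r$-subdivision of $H$ contains a $\leq 2r$-subdivision of $K_3$, i.e.\ a cycle of length at most $6r+3$. Hence a graph $G$ of girth more than $6r+3$ satisfies $\omega(H)\leq 2$ for every $H\in G\shtm r$. For each fixed $r$ only finitely many members of $\mathscr R$ have girth at most $6r+3$, and each such graph has bounded degree, so contributes $\omega(H)\leq\Delta+1$ (the branch vertices of a topological $K_t$ have degree at least $t-1$ in the host graph). Taking the maximum over these finitely many exceptions yields a finite bound $f(r)$, witnessing that $\mathscr R$ is nowhere dense.

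Next I would defeat the strong Erd\H os--Hajnal property using the Expander Mixing Lemma (\Cref{lem:exp}). Fix $\delta>0$ and choose $k$ with $1/k\leq\delta$; it suffices to exhibit arbitrarily large graphs in $\mathscr R$ with no homogeneous pair $(A,B)$ satisfying $|A|,|B|\geq n/k$, since any homogeneous pair of relative size at least $\delta$ would in particular have relative size at least $1/k$. Take a $d_k$-regular Ramanujan graph $G$ on $n$ vertices. A complete bipartite pair $(A,B)$ with $|B|\geq n/k$ would force every vertex of $A$ to have degree at least $n/k$, which is impossible once $n>kd_k$ by $d_k$-regularity. For an edgeless pair, the Mixing Lemma gives $e(A,B)>\frac{d_k|A||B|}{n}-2\sqrt{d_k-1}\,\sqrt{|A||B|}\geq n\bigl(d_k/k^2-2\sqrt{d_k-1}\bigr)>0$ by the choice of $d_k$, so $G[A,B]$ is not edgeless either. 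Thus $G$ has no large homogeneous pair, so $\mathscr R$ fails the strong Erd\H os--Hajnal property; since distal-defined classes enjoy this property, $\mathscr R$ is not distal-defined.

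The hard part is reconciling the two demands: defeating the property for \emph{every} $\delta$ forces the degrees $d_k$ to diverge, yet nowhere denseness naively seems to require bounded degree. The resolution, and the crux of the argument, is the girth hypothesis: large girth forbids short cycles and hence short topological clique models, keeping $\omega(G\shtm r)$ bounded even as the degree grows without bound. The remaining delicate bookkeeping is purely about quantifiers---that a single $d_k$ handles only $\delta=1/k$, and that ``no pair of relative size at least $1/k$'' upgrades to ``no pair of relative size at least $\delta$'' for all $\delta\geq 1/k$.
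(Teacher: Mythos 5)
Your construction follows essentially the same route as the paper's: a family of Ramanujan graphs in which degree and girth both grow, with large girth (plus regularity) forcing nowhere denseness and the expander mixing lemma (plus the degree bound, for the complete bipartite case) defeating the strong Erd\H os--Hajnal property. The paper uses Morgenstern's construction rather than Lubotzky--Phillips--Sarnak, taking one graph $G_q$ per degree $q+1$ and choosing the defining polynomial of degree $>2q$ precisely so that girth $>3q$ is guaranteed \emph{outright} for every member of the class; your quantitative choice $d_k>4k^4$ against the threshold $\delta=1/k$ matches the paper's $q>4/\delta^2$, and your mixing-lemma computation and the ``complete bipartite pairs are impossible by regularity'' step are both correct.

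There is, however, one step that fails as written: the verification of nowhere denseness. You take, for each $k$, the \emph{full} infinite LPS family of degree $d_k$ and then say the graphs can be ``listed in a single sequence arranged so that their girths tend to infinity.'' Arranging a set into a sequence cannot change which girths occur in it: if the union of the full families contains infinitely many graphs of girth at most $6r+3$, no ordering removes them. And the guarantee girth $=\Omega(\log_{d_k}n)$ is vacuous for the smallest members of each fixed-degree family, so a priori each of the infinitely many families may contribute a few graphs of girth $3$; these exceptional graphs then have \emph{unbounded} degree (one for each $d_k$), and your fallback bound $\omega(H)\leq\Delta+1$ no longer yields a finite $f(r)$. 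The repair is easy, and is exactly what the paper's coupling ``$\deg g>2q$'' achieves automatically: within each degree-$d_k$ family, discard the finitely many members of girth less than $k$ (possible because girth does tend to infinity within a fixed-degree family). The pruned families remain infinite, so your mixing-lemma argument is untouched, and now for each $r$ only the finitely many families with $k\leq 6r+3$ can contribute graphs of girth at most $6r+3$, finitely many from each and all of degree at most $d_{6r+3}$. With this amendment your proof is correct.
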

\begin{proof}
  We consider here the construction of Ramanujan graphs due to
  Morgenstern \cite{MORGENSTERN199444}.  Let $q$ be an odd prime power
  and let $g(x)\in\mathbb F_q[x]$ be irreducible of even degree
  $d$. Morgenstern constructs a $(q+1)$-regular Ramanujan graph
  $\Omega_g$ of order $n=q^{3d}-q^d$ or $(q^{3d}-q^d)/2$, and girth at
  least $2/3\log_q n$. Consider $g(x)$ of degree $d> 2q$.  Then
  $n\geq q^{6q}$ and ${\rm girth}(\Omega_g)>3q$. Denote this graph by
  $G_q$. Let

$$\mathscr R=\{G_q\mid q\in\mathbb N\}.$$

\pagebreak
\begin{claim}
  The class $\mathscr R$ is nowhere dense.
\end{claim}
\begin{proof}
  Assume towards a contradiction that $\mathscr R$ is not nowhere
  dense. Then there exists an integer $p$ such that for every integer
  $n$ there is a graph $G_q\in\mathscr R$ that contains the
  $p$-subdivision of $K_n$ as a subgraph.  As triangles in $K_n$
  appear as cycles of length $3p+3$ in $G_q$ it follows that the girth
  of $G_q$ is at most $3p+3$ hence $q\leq p$. As $G_q$ is
  $(q+1)$-regular, it follows that all the vertices in the subgraph of
  $G_p$ (which is a $p$-subdivision of $K_n$) have degree at most
  $q+1$ hence $n\leq q+2\leq p+2$. Hence choosing $n>p+2$ leads to a
  contradiction.  \cqed\end{proof}

\begin{claim}
  The class $\mathscr R$ does not have the strong Erd\H os-Hajnal
  property.
\end{claim}
\begin{proof}
  Assume towards a contradiction that there exists $\delta>0$ such that 
  every graph $n$-vertex graph $G\in\mathscr R$ contains a homogenous pair $(A,B)$ with $\min(|A|,|B|)>\delta n$.

  Let $q>4/\delta^2$
  and let $G=G_q$.
  Assume for contradiction that $G$ contains a homogenous pair $(A,B)$ with both parts of size at least $\delta n$.
Let $z$ be the
  number of edges between $A$ and $B$.
  Then, according to the
  expander mixing lemma
  \smallskip
  \[
  \left|z-\frac{(q+1)|A||B|}{n}\right|<\lambda\sqrt{|A||B|(1-|A|/n)(1-|B|/n)}.
  \]
  \smallskip

  By shrinking $A$ and $B$ if necessary we can assume
  $|A|=|B|=\delta n$. Hence
  \[
  \left|z-(q+1)\delta^2n\right|<\lambda n\delta(1-\delta).
  \]
  
  Hence $z>0$ if
  \[
  (q+1)\delta>\lambda (1-\delta),
  \]
  i.e.\  $(q+1)/\lambda>(1/\delta-1)$. But
  $(q+1)/\lambda\approx \sqrt{q}/2$.  As $q>4/\delta^2$ we deduce
  that no pair $(A,B)$ of subsets size $\delta n$ 
  can be homogeneous. (Note that they cannot indeed form a complete
  bipartite subgraph as the degrees are at most $q\ll \delta n$.
  \cqed
\end{proof}

The theorem now directly follows from the previous two claims.
\end{proof}

We conclude with a negative result related to just constructed class $\mathscr R$:
\begin{corollary}
\label{cor:nd-noreg}
There is no $0<\epsilon<1$ and no integer $K$ such that for every $n$-vertex graph
$G\in\mathscr R$, there exists an $\epsilon$-nice partition $V=V_1\cup\dots\cup V_k$ of
$V(G)$ with $k \leq K$. 
\end{corollary}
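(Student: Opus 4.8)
The plan is to derive \Cref{cor:nd-noreg} from the expander mixing lemma (\Cref{lem:exp}) in essentially the same way the preceding theorem established the failure of the strong Erd\H os--Hajnal property; the only new ingredient is an averaging argument that copes with partitions which need not be equipartitions. Suppose for contradiction that some $0<\epsilon<1$ and some integer $K$ work for the whole class $\mathscr R$. I would set $\delta\coloneqq(1-\sqrt\epsilon)/K>0$, choose a prime power $q>4/\delta^2$, and write $G=G_q$, $n=|V(G)|$. The quantitative fact I extract from the previous proof is that, once $q>4/\delta^2$, any two disjoint vertex subsets of size at least $\delta n$ fail to be homogeneous (it suffices to treat subsets of size exactly $\delta n$, since homogeneity passes to subsets): applying \Cref{lem:exp} with $(q+1)/\lambda>\sqrt q/2>1/\delta$ forces at least one edge between them, so they are not edgeless, while the bounded degree $q+1\ll\delta n$ rules out the complete bipartite alternative. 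The very same computation with $S=T=A$ shows that a single set $A$ with $|A|\geq\delta n$ contains an edge; since $G$ is triangle-free (its girth exceeds $3q\geq 3$), such an $A$ is neither a clique nor an independent set, so the diagonal pair $(A,A)$ is non-homogeneous as well.

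Next I would bound the mass carried by the large parts. Call a part $V_i$ \emph{large} if $|V_i|\geq\delta n$ and \emph{small} otherwise. Since there are at most $K$ parts, the small parts have total size strictly less than $K\delta n=(1-\sqrt\epsilon)n$, so the large parts cover more than $\sqrt\epsilon\,n$ vertices; writing $L\coloneqq\sum_{V_i\text{ large}}|V_i|/n$ this gives $L>\sqrt\epsilon$. By the previous paragraph every pair $(V_i,V_j)$ with $V_i,V_j$ both large is non-homogeneous, the diagonal pairs $i=j$ included. Therefore
\[
\sum_{\text{non-homogeneous }(V_i,V_j)}\frac{|V_i|\,|V_j|}{n^2}\ \geq\ \sum_{V_i,V_j\text{ large}}\frac{|V_i|\,|V_j|}{n^2}\ =\ L^2\ >\ \epsilon,
\]
contradicting the assumption that the partition is $\epsilon$-nice. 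As $\epsilon$ and $K$ were arbitrary, no such pair can exist, which is exactly the statement of \Cref{cor:nd-noreg}.

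The step I expect to require the most care is the inclusion of the diagonal pairs $(V_i,V_i)$: without them a single dominant part could absorb nearly all the mass while contributing nothing to the off-diagonal sum, and the averaging bound $L^2>\epsilon$ would collapse. This is why I isolate the self-pair consequence of \Cref{lem:exp} together with triangle-freeness, and why the sum is phrased over all ordered pairs $(i,j)\in[k]\times[k]$, matching the convention already used in the proof of \Cref{lem:equipartition}. The remaining points are bookkeeping: one checks that the chosen $\delta$ makes the small parts cover strictly less than $(1-\sqrt\epsilon)n$, and that $n=|V(G_q)|$ is large enough (automatic, as $n\geq q^{6q}$) for $\delta n>2$, so that the clique alternative is genuinely excluded.
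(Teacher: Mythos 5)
Your proof is correct, and it reaches the conclusion by a genuinely different route than the paper. The paper's proof is a reduction to the theorem immediately preceding the corollary (that $\mathscr R$ lacks the strong Erd\H os--Hajnal property): if $V_1,\dots,V_k$ is $\epsilon$-nice with $k\le K$, then the homogeneous pairs carry mass greater than $(1-\epsilon)n^2$, so by pigeonhole some homogeneous pair $(V_i,V_j)$ satisfies $|V_i|\,|V_j|>\frac{1-\epsilon}{k^2}n^2$; when $i\ne j$ both parts have size at least $\frac{1-\epsilon}{k^2}n$, and when $i=j$ the paper splits $V_i$ into two halves of size at least $\frac{\sqrt{1-\epsilon}}{2k}n$, producing in either case a \emph{disjoint} homogeneous pair of linear size, which contradicts the theorem with $\delta=(1-\epsilon)/K^2$. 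You instead argue directly: you re-derive from \Cref{lem:exp} that disjoint sets of size at least $\delta n$ cannot be homogeneous, you add an observation the paper never needs --- a single part of size at least $\delta n$ is itself non-homogeneous, since mixing forces an internal edge while triangle-freeness (girth $>3q$) excludes a clique --- and you conclude by averaging that the large parts alone contribute non-homogeneous mass $L^2>\epsilon$. Both proofs rest on the same convention, which you rightly flag as the delicate point: the sum in \Cref{def:nice-decomposition} must run over all ordered pairs including the diagonal (otherwise the one-part partition would be vacuously $\epsilon$-nice and the corollary false); the paper uses this implicitly through its case $i=j$. What each approach buys: the paper's counting step is a general fact about $\epsilon$-nice partitions of arbitrary graphs, so the corollary follows from the strong Erd\H os--Hajnal failure as a black box and the expander analysis is done exactly once; your version is self-contained, replaces the diagonal splitting trick by a graph-theoretic argument, and trades pigeonhole for averaging (with the slightly different constant $\delta=(1-\sqrt\epsilon)/K$ rather than $(1-\epsilon)/K^2$), at the cost of redoing the mixing estimates and invoking one extra property of $G_q$.
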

\begin{proof}
	Let $G\in\mathscr R$ be an $n$-vertex graph. We prove that the existence of an $\epsilon$-nice partition of size $k$ would  imply 
	that $G$ contains a homogenous pair $(A,B)$ of size at least $\delta n$, where $\delta=(1-\epsilon)/k^2$, contradicting the fact that $\mathscr R$ does not have the strong Erd\H os-Hajnal property.
	 
		Assume $V_1,\dots,V_k$ is an $\epsilon$-nice partition, and let $\Sigma$ be the set of all pairs $(i,j)$ with $(V_i,V_j)$ homogenous. As the partition is $\epsilon$-nice we have
		$\sum_{(i,j)\in\Sigma}|V_i|\,|V_j|>(1-\epsilon)n^2$. It follows that there exists a pair $(i,j)\in\Sigma$ such that 
		$|V_i|\,|V_j|>\frac{1-\epsilon}{k^2}n^2$. Then either $i=j$ and by splitting $V_i$ into two parts one gets a homogenous pair $(A,B)$ of vertices, with each of $A$ and $B$ of size at least $\frac{\sqrt{1-\epsilon}}{2k}n$. Otherwise, $i\neq j$ and both $V_i$ and $V_j$ have size at least $\frac{1-\epsilon}{k^2}n$.
\end{proof}

\addtocontents{toc}{\vspace{1\baselineskip}}
\begin{figure}[ht]
\begin{center}
	\includegraphics[width=\textwidth]{}
\end{center}
\caption{Inclusion of graph classes. In blue boxes, the class defined by covers; in yellow boxes, classes defined from the age of a structure; in green boxes, classes defined by excluding a semi-induced bipartite graph; in pink boxes, model theoretical diving lines.}\label{fig:classes}
\end{figure}	 

\section*{Conclusion}

In this paper we surveyed and studied regularity properties of 
various graph classes. Our work highlights a strong and fruitful 
connection between graph theory and model theory. \Cref{fig:classes}
displays the studied concepts and examples of graph classes. 

We introduced the new notions
of order-defined and set-defined graph classes, which are special
cases of semi-algebraic graph classes. These classes nicely fit 
into a hierarchy of classes defined from the age of well-behaved 
infinite structures. Our study of structurally sparse graph classes
is mainly based on the notion of $2$-covers, and we showed that
covers nicely transport various properties of graph classes, including regularity lemmas. 
A natural follow-up of this work will be to consider hypergraphs and relational structures in full generality.
One major question that remained untouched in this paper are
algorithmic versions of the regularity lemmas and of $2$-covers.

\section*{Acknowledgments}
The authors are indebted to O-joung Kwon, who pointed out an error in an earlier version of this paper. 

\bibliographystyle{amsplain}
\bibliography{ref}
\end{document}